\documentclass[12pt]{amsart}
\usepackage{amsfonts}
\usepackage{amsmath}
\usepackage{amsxtra}
\usepackage{amssymb,latexsym}
\usepackage[mathcal]{eucal}
\usepackage{amscd}

\makeindex

\newtheorem{theo}{{\bfseries Theorem}}[section]
\newtheorem{prop}[theo]{{\bfseries Proposition}}
\newtheorem{lem}[theo]{{\bfseries Lemma}}
\newtheorem{cor}[theo]{{\bfseries Corollary}}
\newtheorem{df}[theo]{{\bfseries Definition}}

\newcommand \ol {\overline}
\newcommand \N {\mathbb N}

\newcommand \Z {\mathbb Z}

\newcommand \A {\mathcal A}

\newcommand \CC {\mathcal C}

\newcommand \I {\mathcal I}

\newcommand \NN {\mathcal N}
\newcommand \OO {\mathcal O}

\newcommand \QQ {\mathcal Q}

\newcommand \al {\alpha }
\newcommand \bt {\beta}
\newcommand \ep {\epsilon}
\newcommand \dl {\delta}

\newcommand \tto {\twoheadrightarrow}

\usepackage{amssymb,latexsym}
\usepackage[mathcal]{eucal}
\usepackage{amscd}
\usepackage{amsmath}
\usepackage{graphicx}
\usepackage{amsfonts}
\usepackage{amscd}

\numberwithin{equation}{section}

\begin{document}

\title[Cantor Set Automorphism Group]{ Conjugacy in the Cantor Set Automorphism Group}
\vspace{1cm}

\author{Ethan Akin}
\address{Mathematics Department,
 The City College, 137 Street and Convent Avenue,
 New York City, NY 10031, USA}

\email{ethanakin@earthlink.net}
%

\subjclass[2010]{37B05, 37B10, 37E99, 22F50}

    \vspace{.5cm}
\date{February, 2015}

\begin{abstract} We survey, and extend, results on the adjoint action of the homeomorphism group $H(X)$ on
the space of surjective continuous maps, $C_s(X)$, where $X$ is a Cantor set. We look also at the restriction
of the action to various dynamically defined subsets of $C_s(X)$, e. g. the sets of topologically transitive maps,
chain transitive maps, chain mixing maps, etc.  In each case, we consider whether there exist elements with
a dense conjugacy class and if so, what the generic elements look like.
\end{abstract}
\vspace{.5cm}

\maketitle

\tableofcontents

\section*{Introduction}\label{intro}
\vspace{.5cm}

When a group acts on itself by the adjoint action, the orbit of an element is its conjugacy class.  In the case of the group
$H(X)$\index{$H(X)$} of homeomorphisms on a compact metric space $X$, the adjoint action extends to $C(X)$,\index{$C(X)$} the space of all continuous maps
on $X$. For a continuous map $f$ on $X$ we refer to its $H(X)$ orbit as its conjugacy class and denote it by $H(X) \cdot f$.

The case of $H(X)$ is of special interest because of its dynamic interpretation.  We can
regard a map $f \in C(X)$ as a discrete time
dynamical system on
the state space $X$, describing the evolution by iteration: $x_{n+1} = f(x_n)$. The maps $f$ and $g$ are conjugate
precisely when there exists $h \in H(X)$ such that $g = h \circ f \circ h^{-1}$. This says that $h$ is an isomorphism from
$f$ to $g$ in the category of dynamical systems.

It is convenient to restrict attention to surjective maps, which form a closed subset $C_s(X) \subset C(X)$\index{$C_s(X)$}. A dynamical system
is a pair $(X,f)$
with $X$ a nonempty, compact metric space and $f \in C_s(X)$. It is invertible when $f$ is injective, so that $f \in H(X)$.

Considerable work has been done analyzing the adjoint action of $H(X)$ for the special case when $X$ is a Cantor space,
i.e. a metric space homeomorphic to the usual Cantor set.
The basis of clopen
sets makes everything more tractable in that case. Furthermore, the Cantor set plays a central role in the theory of dynamical
systems.  In various guises it appears as the state space, e.g. for
coding-related systems like subshifts and for algebraic systems like
adding machines (also called odometers).  Furthermore, every system on a  state space with no isolated points
has an almost one-to-one lift to a system on
the Cantor set.

For a Polish topological group $G$ like $H(X)$, it is of special
interest when the adjoint action of $G$ on itself is topologically
transitive, i.e. when there exists $g$ whose conjugacy class is dense in $G$. Such a $g \in G$ is called
a \emph{transitive element}\index{transitive element}. A group which admits such transitive elements is said to have the \emph{Rohlin Property}.
\index{Rohlin Property}
The name is motivated by an ergodic theory result of Rohlin concerning the automorphism group of the Lebesgue space. When $G$
satisfies the Rohlin Property then the set of transitive elements is a dense $G_{\dl}$ subset of $G$. It sometimes happens
that a single conjugacy class is residual, i.e. it contains a dense $G_{\dl}$ set. Since distinct conjugacy classes are disjoint
but any two residual subsets meet, it follows that there is at most one residual conjugacy class. When such a class exists
we call its members \emph{transitive elements of residual type}\index{transitive element of residual type} and we say that the 
group has the \emph{Strong Rohlin Property}.\index{Strong Rohlin Property}
\index{Rohlin Property!Strong}
Finally, when the diagonal action of $G$ on any finite product $G^n$ admits a residual conjugacy class then we say that
$G$ has \emph{ample generics}. For a survey, see \cite{GW2}.

For $X$ a Cantor space it was shown by Glasner and Weiss in \cite{GW} and,
independently, in \cite{AHK} that the automorphism group $H(X)$
has the Rohlin Property. For a certain class of good measures $\mu$ on $X$ it was shown in \cite{A2} that $H_{\mu}(X)$,\index{$H_{\mu}(X)$}
the closed subgroup of automorphisms which preserve $\mu$, has the Strong Rohlin Property.  It was shown that the product
of the universal adding machine (see below) with the identity on a Cantor space provides a transitive element of residual type.
Using Fra\"{i}ss\'{e}
theory, Kechris and Rosendal showed in \cite{KR} that $H(X)$ has the Strong Rohlin Property. An explicit description
of a transitive element of residual type was then given in \cite{AGW}. With $\mu$ the Haar measure on $X = 2^{\N}$ it was shown
in \cite{KR} that $H_{\mu}(X)$ has ample generics and in \cite{K} that $H(X)$ itself has ample generics. Transitivity results with
different topologies were proved in \cite{BDK}.

A conjugacy invariant subset of $C(X)$ defines a dynamic property, i.e. one which is an invariant under topological conjugacy.
Conversely, we will consider the subsets defined by  various dynamic concepts
associated with recurrence, transitivity
and mixing. We recall that the following subsets of $C_s(X)$ are closed.
\begin{itemize}
\item  $C_s(X;1)$,\index{$C_s(X;1)$} the surjective maps which admit fixed points.
\item  $CT(X)$,\index{$CT(X)$} the chain transitive maps, which contains $TT(X)$, the topologically transitive maps, as a dense $G_{\dl}$ subset.
\item  $CR(X)$,\index{$CR(X)$} the chain recurrent maps, which contains the maps with dense recurrent points as a dense $G_{\dl}$ subset.
\item $CM(X)$,\index{$CM(X)$} the chain mixing maps, which contains $ WM(X)$, the weak mixing maps, as a dense $G_{\dl}$ subset.
\end{itemize}
So $CR(X;1) \ = \ CR(X) \cap C_s(X;1)$ and $CM(X;1) \ = \ CM(X) \cap C_s(X;1)$ \index{$CR(X;1)$}\index{$CM(X;1)$} are closed as well. If a chain
transitive map admits a
fixed point then it is chain mixing and so we need not consider $CT(X;1)$. Note that the density results assume that $X$ is a Cantor
space.

For such a conjugacy invariant subset $S$  of $C_s(X)$ with $X$ a Cantor space,
 we wish to consider whether the restriction of the adjoint action to $S$ admits
 transitive elements or transitive elements of residual type.  For
example, Hochman \cite{H} proved that the universal adding machine is a transitive element of residual type for $CT(X)$.
Shimomura in \cite{S1}, \cite{S2}, \cite{S3} has extended these results, as we will describe below.
If $f$ is a transitive element for $S$, it means that every element of $S$ can be uniformly approximated by a map conjugate to
$f$, i.e. it is $f$ up to a change of variables.

Notice that when we say that $f \in S \subset C_s(X)$ is a transitive element for $S$ we are referring to the adjoint action of
$H(X)$ on $S$. Meanwhile, $f$ itself defines a dynamical system on $X$ which may or may not be topologically transitive.

The analysis proceeds by using finite approximations for $f \in C_s(X)$ as in \cite{AHK}.

In general, if $\A$ is an open cover of a compact metric space $X$, we can regard the elements of $\A$ as providing a finite
approximation to $X$. Think of them as pixels covering $X$. We can represent $f \in C(X)$ by using $f^{\A}$, the set of
pairs $(U_1,U_2) \in \A \times \A $ such that $f(U_1)$ meets
$U_2$. Here the relationship is usually not that of a function, $f(U_1)$ usually meets several members of $\A$.
When $X$ is a Cantor space, we restrict attention to the case when $\A$ is a decomposition
of $X$ by clopen subsets.

There are different
ways to think about  this finite setup. Most authors, including Bernardes and Darji \cite{BD} and Shimomura in his papers,
 regard the elements of $\A$ as vertices of a directed graph with
the oriented edges the pairs in $f^{\A}$. The original function $f$ is conjugate to an inverse limit constructed from such graphs.
The projective Fra\"{i}ss\'{e} constructions of Kwiatkowska in \cite{K} lead to similar inverse limit constructions.

 I prefer the equivalent approach of regarding the set $f^{\A}$  as a relation on $\A$, see, e.g.
\cite{A1} Chapter 5. A relation on a set $A$ is just a subset $R \subset A \times A$.
It is called a \emph{surjective relation}\index{surjective relation} when each of the two coordinate projections maps it  onto $A$.
A function $f$ on $A$ is a special case
of a relation  such that $f(x) = \{ y \in A : (x,y) \in f \}$ is a singleton for every $x$ and, abusively, we write $f(x)$
for both the singleton set and its unique element. We can iterate relations, and various dynamics concepts extend to relations.
For us, a \emph{system} will be a pair $(A,R)$ with $A$ a compact metric space and $R$ a closed, surjective relation on $A$.
In Section \ref{relations} we review these relation concepts from \cite{A1}
and recall various dynamic constructions like inverse limits and
the sample path system of a relation. For example, given a relation on a finite set, the sample path system
is the subshift of finite type associated with the relation. For a continuous surjective map on a compact metric space, the
sample path system is the natural homeomorphism lift. After the introductory general remarks of this section,
all our spaces are assumed to be
either Cantor spaces or finite.

In Section \ref{representations} we describe the elementary properties of the representation procedure.
Instead of  finite decompositions $\A$ on a Cantor space $X$, our tool for building representations will be an
\emph{indexed partition}\index{indexed partition}\index{partition!indexed} $\al : X \tto I$, a continuous surjective map to a nonempty,
finite discrete space. Since such a function is
locally constant,  a partition is just a decomposition $\A^{\al}$
whose members have been indexed by the set $I$.

We pick out a collection
of finite sets to serve as the index sets.  Let $\I$ be the countable set of all nonempty, finite subsets of finite products of
of $\N$, the set of positive integers.  This ensures that if $\phi$ is a nonempty relation on $I \in \I$ then $\phi$ itself is
a member of $\I$.

If $f \in C_s(X)$ and $\al : X \tto I$ is a partition then
$$f^{\al} \quad =_{def} \quad (\al \times \al)(f) \hspace{2cm}$$
\index{$f^{\al}$} is a surjective relation on $I$. If $\phi = f^{\al}$, then we will say that \emph{$\phi$ represents $f$ via $\al$}.

If $\CC(X,\I)$\index{$\CC(X,\I)$} is the countable set of indexed partitions, equipped
with the discrete topology, then
then  $\Gamma_0 : C_s(X) \times \CC(X,\I) \to \I$\index{$\Gamma_0$} by $(f,\al) \mapsto f^{\al}$ is a locally constant map. Projecting away
from the second coordinate we obtain the \emph{representation relation}
$$\Gamma \quad  = \quad \{ \ (f , \phi) \in C_s(X) \times \I \  : \ \phi = f^{\al} \ \mbox{ for some } \ \al \in \CC(X,\I) \ \}.$$
\index{$\Gamma$}That is, $\Gamma(f)$ is the set of surjective relations which represent
$f$.

If $f, g \in C_s(X)$ and $g = h^{-1} \circ f \circ h$ with $h \in H(X)$ then as subsets of $X \times X$,
$f = (h \times h)(g)$. Thus, if $\phi$ represents $f$ via $\al$,
then $\phi$ represents $g $ via
$\al h =_{def} \al \circ h$.  It follows that $\Gamma^{-1}(\phi) \subset C_s(X)$ is open and conjugacy invariant.

For $f, g \in C_s(X)$ we write $f  \sim_{\al}  g$ if $\al f = \al g$, or, equivalently, if $f(x)$ and $g(x)$ lie in the
same element of $\A^{\al}$ for all $x$. If $f \sim_{\al} g$ then $f^{\al} = g^{\al}$.  On the other hand, if
$f^{\al} = g^{\al}$ then there exists $h \in H(X)$ such that $h \sim_{\al} 1_X$ and $h \circ f \circ h^{-1} \sim_{\al} g$.

 In Section \ref{characterizations} we obtain the results about representations.  For example, Theorem \ref{theo3.02}:
\vspace{.25cm}

{\bfseries Theorem:} \emph{If  $\bt : X \tto J$  is a partition,  $\phi$ is a surjective relation on $I$,
and $f \in C_s(X)$
then there exist  $g \in H(X)$ and $\al : X \tto I$ such that}
$$g \ \sim_{\bt} \ f \qquad \mbox{ and } \qquad g^{\al} \ = \ \phi.$$
\vspace{.25cm}

This implies that $\Gamma^{-1}(\phi)$ is dense in $C_s(X)$ for every surjective relation $\phi$ on a set in $\I$.
It follows that the functions $f \in C_s(X)$ which can be represented by every surjective relation form a
conjugacy invariant, dense, $G_{\dl}$ subset. This subset is exactly the set of transitive elements for $C_s(X)$.

In general,  a closed, conjugacy invariant subset $K$ of $C_s(X)$
is characterized by the set $\Gamma(K)$ of relations in $\I$ which represent its members. Furthermore, the closure of the conjugacy
class of $f \in C_s(X)$ consists of exactly those $g \in C_s(X)$ such that $\Gamma(g) \subset \Gamma(f)$.

We call a closed, conjugacy invariant set $K$ a \emph{conjugacy transitive set} when it is the closure of a
single conjugacy class.  That is, the restriction of the action of the group $H(X)$ to $K$ is topologically transitive.
In that case, $Trans(K) \ = \ \{ f : \overline{H(X)\cdot f} = K \}$, the set of 
\emph{conjugacy transitive elements}\index{conjugacy transitive element} of $K$,
form a $G_{\dl}$ subset of $K$ which is, of course, dense.

We conclude the section by describing a lifting property introduced in \cite{AGW} and extended in \cite{S3}.
This property and an equivalent factoring property provide a sufficient condition that the conjugacy class of
an element $f \in C_s(X)$ be a $G_{\dl}$ set and show, in particular, that $f$ is then a homeomorphism. We say that
such a homeomorphism is of \emph{residual type}. If $K$ is a closed, conjugacy transitive subset, then $Trans(K)$
contains at most one conjugacy class whose elements are of residual type.

Finally, in Section \ref{examples} we collect the results built using the tools of the preceding sections. We show that
$C_s(X), C_s(X;1), CR(X),$ $ CR(X;1), CT(X)$ are conjugacy transitive subsets each of which admits a conjugacy transitive
element of residual type. On the other hand, $CM(X)$ and $CM(X;1)$ are conjugacy transitive subsets for which I can
construct no conjugacy transitive element of residual type and I conjecture that they do not exist.

The set $CM(X;1)$ is
of special interest. It is, in fact, conjugacy minimal. This follows from a theorem of Shimomura that if $f \in C_s(X)$ is
not a periodic function then the closure of the conjugacy class of $f$ contains $CM(X;1)$. This also shows that the only
other conjugacy minimal subset of $C_s(X)$ is the singleton $\{ 1_X \}$. There is a simple inverse limit construction which
yields elements $CM(X;1)$ in a $G_{\dl}$ subset which we label $H(X;1!)$. $f \in H(X;1!)$ if it is a homeomorphism with a
unique fixed point and for every $x \in X$, other than the fixed point,  the $\pm$ orbit $\{ f^k(x) : k \in \Z \}$ is dense in $X$.
The construction is associated with a little semigroup, the analysis of which shows that some members of $H(X;1!)$ are
topologically mixing while others are not even weak mixing. The weak mixing elements of $H(X;1!)$ form a dense $G_{\dl}$ subset
of $CM(X;1)$.
\vspace{1cm}

\section{Relations and Maps}\label{relations}
\vspace{.5cm}

Our spaces are metric spaces with metrics (all labeled $d$) bounded by $1$.
On a finite product $A_1 \times \dots \times A_n$ we use the
metric $d = max_{i=1}^n \ d \circ (\pi_i \times \pi_i)$. On a countably infinite product $\Pi_{i= 1}^{\infty} \ A_i$
we use $d = max_{i=1}^{\infty} \ \frac{1}{2^{i-1}}d \circ (\pi_i \times \pi_i)$.
Here $\pi_i$ is the $i^{th}$ coordinate projection. On a finite set or a discrete space like $\Z$, the set of
integers, or like $\N$, the set of positive integers, we use the zero-one metric.

We will use the relation notation following \cite{A1} and so we briefly review it.

For sets $A, B$ a  relation $R : A \to B$ is a  subset of
 $A \times B$. $R$ is a \emph{relation on $A$} when $B = A$. A map is a
 relation such that $R(x) = \{ y : (x,y) \in R \}$ is a singleton set for every
 $x \in A$.  For $A_0 \subset A$, the image $R(A_0) = \bigcup_{x \in A_0} \ R(x)$.
 $R(A_0)$ is the projection to $B$ of $R \cap (A_0 \times B) \subset A \times B$.
 $R^{-1} : B \to A$ is defined to be $\{ (y,x) : (x,y) \in R \}$. For $B_0 \subset B$
 we let $R^*(B_0) = \{ x \in A : R(x) \subset B_0 \} = A \setminus R^{-1}(B \setminus B_0)$.
 So $R^*(B_0) \subset R^{-1}(B_0) \cup R^*(\emptyset)$.
 If $R$ is a map, then $R^*(B_0) = R^{-1}(B_0)$.  The relation is called \emph{surjective}\index{surjective relation}\index{relation!surjective} when
 $R(A) = B$ and $R^{-1}(B) = A$, or, equivalently, $R(x) \not= \emptyset$ and $R^{-1}(y) \not= \emptyset$
 for all $x \in A, \ y \in B$.

  If $R : A \to B$ and $S : B \to C$ then the composition $S \circ R : A \to C$ is the image under the
 projection to $A \times C$ of the set $(R \times C) \cap (A \times S) \subset A \times B \times C$.
 Composition is associative.

 For a relation $R$ on $A$ we let $R^{n+1} = R^n \circ R$ for $n = 1,2,...$
 and let $R^0 = 1_A$ and we define $|R| = \{ x : (x,x) \in R \}$.
 We let $Per(R) = \{ n \in \N : |R^n| \not=\emptyset \}$\index{$Per(R)$}. If $A_0 \subset A$ then
 $R \cap (A_0 \times A_0)$ is the \emph{restriction} of $R$ to $A_0$.

A relation $F$ on $A$ is \emph{reflexive} when $1_X \subset F$, where $1_X$ is the identity map on $X$.
The relation is \emph{symmetric} when $F = F^{-1}$.  We will say that
$F$\emph{satisfies transitivity} when $F \circ F \subset F$.  Then $F \cap F^{-1}$ satisfies symmetry and
 transitivity and restricts
 to an equivalence relation on $|F|$. We call the equivalence classes in $|F|$ the \emph{basic sets} of $F$. We don't call
 $F$ a transitive relation because we give the latter term a different, dynamic, meaning, see below.

 {\bf N. B.} From now on we will assume that our spaces $A$ are  metric spaces which are either compact or discrete.

A closed relation $R$  is a closed subset of $A \times B$. From compactness various properties of closed relations follow.
 A map is continuous iff it is a closed relation.  Furthermore, the composition of closed relations is
 closed and the image of a closed set by a
 closed relation is closed. So if $B_0$ is open in $B$ then $R^*(B_0)$ is open in $A$.
If $R$ is a closed relation on
 $A$ then $|R|$ is a closed subset of $A$. If $F$ is a closed relation which satisfies transitivity,
 then the basic sets $\{ F(x) \cap F^{-1}(x) : x \in |F| \}$ are closed sets.

 For a relation $R$ on $A$, we define the following relations associated with $R$:
 \begin{itemize}
 \item  The \emph{orbit relation} is $\OO R = \bigcup_{n = 1}^{\infty} \ R^n$.\index{orbit relation}\index{$\OO R$}

 \item The \emph{wandering  relation} is $\NN R = \ol{\OO R}$\index{wandering relation}\index{$\NN R$}

 \item The  \emph{chain relation}  $ \CC R = \bigcap_{\ep > 0} \ \OO (V_{\ep} \circ R \circ V_{\ep})$ where \\
 $V_{\ep} = \{ (x,y) : d(x,y) < \ep \}$.\index{chain relation}\index{$\CC R$}
 \end{itemize}
 The relations $\NN R$ and $\CC R$ are closed and the relations $\OO R$ and $\CC R$ satisfy transitivity.
 Clearly,  $R \subset \OO R \subset \NN R \subset \CC R$ and if $A$ is discrete,
 then  $\OO R = \NN R = \CC R$.

 For a closed relation $R$ on $A$, we will say that $R$ is \emph{transitive}\index{transitive} when $\OO R = A \times A$,
 \emph{topologically transitive}\index{topologically transitive}\index{transitive!topologically}  when $\NN R = A \times A$, and
 \emph{chain transitive} when \index{chain transitive}\index{transitive!chain}
 $\CC R = A \times A$. We call $R$ \emph{periodic} when $|R^n| = A$ for some $n \in \N$.
 We call $R$ \emph{recurrent}\index{recurrent} when $|\OO R| = A$, \emph{topologically recurrent}\index{topologically recurrent}\index{recurrent!topologcially}
 when $|\NN R| = A$ and \emph{chain recurrent}\index{chain recurrent}\index{recurrent!chain} when $|\CC R| = A$. We will refer to these as the three
 \emph{transitivity properties} and the three \emph{recurrence properties}.

 For subsets $U,V \subset A$ the \emph{hitting time set}index{hitting time set} is
 $$N_R(U,V) = \{ n \in \N : R^n(U) \cap V \not= \emptyset \} = \{ n \in \N : R^n \cap (U \times V) \not= \emptyset \}.$$
 \index{$N_R(U,V)$}
 A closed relation $R$ is topologically transitive iff for all nonempty open $U,V \subset A$, $N_R(U,V) \not= \emptyset$.
 It is topologically recurrent iff for all nonempty open $U \subset A$, $N_R(U,U) \not= \emptyset$.

 If $A$ is discrete, then for a surjective relation $R$ the three recurrence properties coincide and the
 three transitivity properties coincide.

 A  relation $R$ on $A$ is called \emph{mixing}\index{mixing} if there exists a positive integer
 $N$ such that $R^N = A \times A$. It then follows, by induction, that $R^n = A \times A$ for every
 $n \geq N$.  To see this, observe that if $R$ is mixing, then it is surjective and so,
 given $x_1, x_2 \in A$, there exists $x_3 \in A$ such that
 $x_3 \in R(x_1)$. By inductive hypothesis $x_2 \in R^n(x_3)$ and so $x_2 \in R^{n+1}(x_1)$.
$R$ is called \emph{chain mixing}\index{chain mixing}\index{mixing!chain} if for every $\ep > 0$  the relation
 $V_{\ep} \circ R \circ V_{\ep}$ is mixing. $R$ is called \emph{topologically mixing} if for every $\ep > 0$ there
 exists a positive integer $N$ such that $n \geq N$ implies $V_{\ep} \circ R^n \circ V_{\ep} = A \times A$. Of course,
 topological mixing implies chain mixing. Again, if $A$ is discrete, these three concepts coincide. We refer to these
 as the three \emph{mixing properties}. By the nine \emph{dynamics properties} we mean the three recurrence, the
 three transitive and the three mixing properties.

If $R$ is a closed, surjective relation on $A$ and $S$ is a closed relation on $A$ with $R \subset S$ then $S$ is surjective
 and it satisfies any of the nine dynamics properties when  $R$ satisfies the corresponding property.
 Notice that if $S$ is a map then $R \subset S$ implies
 $R = S$ since $R$ is assumed surjective.

  If $R$ is a closed, surjective relation on $A$ with $A$ compact, we let and $A_R \subset A^{\Z}$ be the \emph{sample path space}\index{sample path space}
with shift homeomorphism $S_R$ on $A_R$ and the projection map $p_0 : A_R \to A$.
\begin{equation}\label{1.01aa}
\begin{split}
A_R \ = \ \{ z \in A^{\Z} : (z_n,z_{n+1}) \in R \ \mbox{ for all} \ n \in \Z \},\\
S_R(z)_n \ = \ z_{n+1}, \qquad p_0(z) \ = \ z_0. \hspace{1cm}
\end{split}
\end{equation}
The projection $p_0$ is a continuous surjection with
$(p_0 \times p_0)(S_R) = R$. Also, $Per(S_R) = Per(R)$.

\begin{prop}\label{prop1.01} Let $R$ be a closed surjective relation on a finite set $A$.
$$ R \quad \mbox{recurrent} \ \  \Longrightarrow \qquad (A_R,S_R) \qquad \mbox{has dense periodic points}.$$
$$ R \quad \mbox{transitive} \ \ \ \Longrightarrow \qquad (A_R,S_R) \qquad \mbox{is topologically transitive}.$$
$$ R \quad \mbox{mixing} \qquad \ \Longrightarrow \qquad \ (A_R,S_R) \qquad \mbox{is topologically mixing}. \hspace{.4cm}$$
\end{prop}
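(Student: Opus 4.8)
The plan is to translate each conclusion about the subshift $(A_R,S_R)$ into a combinatorial statement about finite $R$-paths, exploiting that $A$ is finite. A point of $A_R$ is a bi-infinite $R$-path $z=(z_n)_{n\in\Z}$, and since $A$ is discrete the cylinders $[z_{-k}\cdots z_k]=\{w\in A_R : w_n=z_n \text{ for } |n|\le k\}$ form a neighborhood basis at $z$; each is determined by an \emph{allowed block}, i.e. a word $a_0\cdots a_m$ with $(a_i,a_{i+1})\in R$. Two observations drive everything. First, by surjectivity of $R$ every allowed block extends on both sides to a point of $A_R$, so realizing a block is the only constraint. Second, $(a,b)\in\OO R$ says exactly that some allowed block of length $\ge 1$ runs from $a$ to $b$, while $|R^n|$ and the condition $R^n=A\times A$ record respectively the existence of a cycle through a given state and of connecting blocks of the prescribed length $n$.

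For the transitive case I would use the criterion, recorded above for closed relations, that $(A_R,S_R)$ is topologically transitive iff $N_{S_R}(U,V)\ne\emptyset$ for all nonempty open $U,V$, and it is enough to treat cylinders $U=[u_{-k}\cdots u_k]$ and $V=[v_{-k}\cdots v_k]$. Since $\OO R=A\times A$, there is an allowed block from $u_k$ to $v_{-k}$, say of length $\ell$; splicing $u_{-k}\cdots u_k$, this connecting block, and $v_{-k}\cdots v_k$, and extending to a bi-infinite path, yields a point of $U$ carried into $V$ by a suitable power of $S_R$, so $N_{S_R}(U,V)\ne\emptyset$. The mixing case is the same splicing argument, except that I would invoke $R^n=A\times A$ for all $n\ge N$ (which, as shown above, follows from mixing) to obtain connecting blocks of every sufficiently large length; this produces, for each pair of cylinders, a threshold $M$ with $S_R^n(U)\cap V\ne\emptyset$ for all $n\ge M$, which is topological mixing.

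The recurrence case is where I expect the real work. A periodic point of $S_R$ is precisely the bi-infinite repetition of a cycle $c_0\cdots c_{p-1}c_0$ in $R$, so to make the periodic points dense I would, given $z$ and a cylinder $[z_{-k}\cdots z_k]$, try to close the block $z_{-k}\cdots z_k$ into a cycle by appending a return block from $z_k$ back to $z_{-k}$ and repeating. The obstacle is producing that return: recurrence only gives $|\OO R|=A$, i.e. a cycle through each \emph{single} state, whereas closing the block needs $(z_k,z_{-k})\in\OO R$. The natural attempt is to pass to the basic sets of the transitive relation $\OO R$ (the classes of $\OO R\cap(\OO R)^{-1}$ on $|\OO R|=A$) and argue the block lies in one class; but since an allowed block may cross between basic sets, this is exactly the step that could fail under mere recurrence, and I would watch whether the intended density is in all of $A_R$ or only in its recurrent part. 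This closing-up, together with deciding precisely what extra use of surjectivity and finiteness of $A$ it requires, is the main difficulty, in contrast to the transitive and mixing conclusions, which need no return path and follow directly from splicing.
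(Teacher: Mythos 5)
Your transitive and mixing arguments are correct and are essentially the paper's own proof: the paper splices a tail of $z$ to a tail of $w$ through a connecting block (exactly your cylinder-splicing), and for mixing it uses the observation, recorded just before the proposition, that $R^N=A\times A$ forces $R^n=A\times A$ for all $n\geq N$, so that every sufficiently large time is a hitting time. The only cosmetic difference is that for transitivity the paper exhibits a single transitive point containing every allowed word, rather than checking $N_{S_R}(U,V)\neq\emptyset$ cylinder by cylinder.

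Your suspicion about the recurrence case is exactly right, and the gap you flag cannot be closed. The paper's proof fills it with the assertion that when $R$ is recurrent, all coordinates $z_i$ of a point $z\in A_R$ lie in a single basic set of $\OO R$; that assertion is precisely the return path $(z_k,z_{-k})\in\OO R$ you were missing, and it is false for relations (it is what happens for maps, where recurrence of a map on a finite set forces it to be a permutation). Concretely, take $A=\{1,2\}$ and $R=\{(1,1),(1,2),(2,2)\}$: this is closed, surjective, and recurrent in the paper's sense, since $|R|=A$ and hence $|\OO R|=A$. Yet the point $z$ with $z_n=1$ for $n<0$ and $z_n=2$ for $n\geq 0$ lies in $A_R$, and the cylinder $\{w\in A_R: w_{-1}=1,\ w_0=2\}$ contains no periodic point: since $R(2)=\{2\}$ and $R^{-1}(1)=\{1\}$, any periodic point of $S_R$ is constant at $1$ or constant at $2$. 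So with ``recurrent'' as defined in Section 1, the first implication is false, and the paper's proof errs at exactly the step you isolated; density of periodic points holds only in the part of $A_R$ consisting of paths that stay inside basic sets, as you anticipated.

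What is true, and what makes your closing-up argument work verbatim, is the implication under the stronger hypothesis that every edge of $R$ lies inside a basic set, i.e. $(a,b)\in R$ implies $(b,a)\in\OO R$. Since basic sets are disjoint equivalence classes and consecutive edges of the block $z_{-k}\cdots z_k$ share a vertex, all of its edges then lie in one basic set, which supplies the return block from $z_k$ to $z_{-k}$ and hence a periodic point through the given cylinder. This stronger property does hold for the relations $f^{\al}$ induced by chain recurrent maps, which is how the proposition is actually applied later (Proposition 2.05, Theorems 3.01 and 3.02): an $\ep$-chain from $f(x)$ back to $x$, with $\ep$ below the Lebesgue number of $\al$, projects to a path in $f^{\al}$ from $\al(f(x))$ back to $\al(x)$, closing every edge into a cycle. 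But it is not implied by recurrence of a relation, which puts each vertex, not each edge, on a cycle.
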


\begin{proof} If $R$ is recurrent then $A$ is a union of basic sets for $R$ and if $z \in A_R$ then
all $z_i \in A$ lie in the same basic set.  For any positive integer $N$ there is a sequence
$z_N = u_0,....,u_{k+1} = z_{-N}$ with $(u_i,u_{i+1}) \in R$ for $0 \leq i \leq k$.
The word $z_{-N}....z_{N}u_1...u_k$ on $[-N,N + k]$ extends to a periodic point in $A_R$ which agrees with
$z$ on $[-N,N] $.

If $R$ is transitive then $A$ consists of a single basic set and so $A_R$ contains a point $z$ in which every
finite word of $A_R$ occurs, i.e. $z$ is a transitive point.

If $R^M = A \times A$ and $z, w \in A_R$ then for any positive integer $N$ there is a sequence
$z_N = u_0,...,u_{M} = w_{-N}$. So there is an element $q$ of $A_R$ which agrees with $z$ on $(-\infty,N]$
and $S^{M + N}(q)$ agrees with $w$ on $[-N, \infty)$. Thus, $(A_R, S_R)$ is topologically mixing.
\end{proof} \vspace{.5cm}

We will write $(A,R)$ for a pair consisting of a compact metric space $A$ and closed relation $R$ on $A$. We will
call the pair a \emph{dynamical system}\index{dynamical system} or just a system, when the relation $R$ is surjective.

We will say that $p$ is a map of systems, and write $p : (A_1,R_1) \to (A_2,R_2)$ when $p : A_1 \to A_2$ is
a continuous map with $(p \times p)(R_1) \subset R_2$, or, equivalently $R_1 \circ p \subset p \circ R_2$. Since $R_1$ is
surjective,
the latter inclusion is an equality  if  $R_2$ is a map. If $R_1$ and $R_2$ are maps, then $p : (A_1,R_1) \to (A_2,R_2)$
 iff the following diagram commutes:
 \[ \begin{CD}
A_1 \ @> R_1 >> \ A_1\\
 @V{p}VV              @VV{p}V\\
A_2 \ @> R_2 >> \ A_2.
  \end{CD}  \]

  In general, $p : (A_1,R_1) \to (A_2,R_2)$ implies
 $p : (A_1,R^n_1) \to (A_2,R^n_2)$
 for $n \in \Z, \ p :(A_1,\A R_1) \to (A_2,\A R_2)$ for $\A = \OO, \NN, \CC$ and $p(|R_1|) \subset |R_2|$.
It follows that $Per(R_1) \subset Per(R_2)$.

 We will write $p : (A_1,R_1) \tto (A_2,R_2)$ when $p : A_1 \tto A_2$ is
a continuous surjection with $(p \times p)(R_1) = R_2$. We will then say that $p$ is surjective or that
$p$ maps $R_1$ onto $R_2$ or that $(A_2,R_2)$ is a factor of $(A_1,R_1)$. In general,
\begin{equation}\label{1.01cc}
\begin{split}
 (p \times p)(R_1) \ =  \ p \circ R_1 \circ p^{-1},\qquad \mbox{ and so} \hspace{1cm} \\
(p \times p)(R_1) \ = \ R_2 \qquad \Longleftrightarrow \qquad  p \circ R_1 \circ p^{-1} \ = \ R_2.
\end{split}
\end{equation}

For example, for every $i \in \Z$, the projection $p_i: (A_R, S_R) \tto (A,R)$ maps the
sample space homeomorphism $S_R$ back
onto the relation $R$ itself.

In the special case when $R = f $ is a continuous map, we label the sample path space pair $(X_f,S_f)$
as $(\hat X, \hat f)$ and call it the \emph{natural lift} of $f$ to a homeomorphism. If $z \in \hat {X}$ then
$z_{i+n} = f^n(z_i)$ for all $i \in \Z$ and $n \in \N$.  That is, the $i^{th}$ coordinate determines all of the
later coordinates.

\begin{lem}\label{lem1.01a} (a) Let $(A_i,R_i)$ be systems for $i = 1,2,3$. If $p : (A_1, R_1) \tto (A_2, R_2)$ and
 $q : A_2  \to A_3$ then
$q \circ p : (A_1, R_1) \tto (A_3,R_3)$ iff $q : (A_2,R_2) \tto (A_3,R_3)$.

(b)Let $p : (A_1,R_1) \to (A_2,R_2)$ with $p(A_1) = A_2$, i.e. $p$ is surjective between the
underlying spaces. If for each $(a,b) \in R_2$ either $R_2(a) = \{ b \}$ or $R_2^{-1}(b) = \{ a \}$ then
$(p \times p)(R_1) = R_2$, i.e. $p : (A_1,R_1) \tto (A_2,R_2)$. \end{lem}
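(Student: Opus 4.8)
The plan is to reduce both parts to the single functoriality identity that for composable maps the product operation composes: $(q\circ p)\times(q\circ p) = (q\times q)\circ(p\times p)$ as a map on $A_1\times A_1$. Applying both sides to $R_1$ and invoking the hypothesis $(p\times p)(R_1)=R_2$ contained in $p:(A_1,R_1)\tto(A_2,R_2)$, I would record the one equation that carries most of the weight:
$$((q\circ p)\times(q\circ p))(R_1) \ = \ (q\times q)((p\times p)(R_1)) \ = \ (q\times q)(R_2).$$

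For part (a) I would first match up the two auxiliary conditions hidden in the double arrows. Since $p$ is onto, $(q\circ p)(A_1)=q(A_2)$, so $q\circ p$ is a surjection of spaces iff $q$ is; and since $p$ is a continuous surjection of compact metric spaces it is a closed, hence quotient, map, so $q\circ p$ is continuous iff $q$ is. With continuity and space-surjectivity thus transferred in both directions, the displayed equation shows that the defining relation equality for the composite, $((q\circ p)\times(q\circ p))(R_1)=R_3$, holds iff $(q\times q)(R_2)=R_3$, the defining relation equality for $q$. Reading these three equivalences in conjunction yields the biconditional $q\circ p:(A_1,R_1)\tto(A_3,R_3)$ iff $q:(A_2,R_2)\tto(A_3,R_3)$.

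For part (b) the inclusion $(p\times p)(R_1)\subset R_2$ is already part of $p:(A_1,R_1)\to(A_2,R_2)$, so I would only establish the reverse inclusion $R_2\subset(p\times p)(R_1)$; together with the given space-surjectivity of $p$ this upgrades $\to$ to $\tto$. I would fix $(a,b)\in R_2$ and split on the hypothesis. If $R_2(a)=\{b\}$, I would choose $x\in p^{-1}(a)$, nonempty since $p$ is onto, and then use that the system relation $R_1$ is surjective to choose $y$ with $(x,y)\in R_1$; since $(a,p(y))=(p\times p)(x,y)\in R_2$, the singleton hypothesis forces $p(y)\in R_2(a)=\{b\}$, so $(x,y)\in R_1$ witnesses $(a,b)\in(p\times p)(R_1)$. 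The case $R_2^{-1}(b)=\{a\}$ is symmetric: choose $y\in p^{-1}(b)$, use surjectivity of $R_1$ to get $x$ with $(x,y)\in R_1$, and use $R_2^{-1}(b)=\{a\}$ to force $p(x)=a$.

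Neither step presents a serious obstacle. The only point requiring genuine care is the bookkeeping in part (a), where in the forward direction I must appeal to the fact that $p$ is a quotient map to conclude continuity of $q$ from continuity of $q\circ p$; the rest is formal. The essential mechanism in part (b) is that surjectivity of $R_1$ always supplies a partner for a chosen preimage, whereupon the singleton hypothesis pins down the image of that partner.
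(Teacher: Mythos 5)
Your proposal is correct and follows essentially the same route as the paper: in (a) the identity $((q\circ p)\times(q\circ p))(R_1)=(q\times q)((p\times p)(R_1))=(q\times q)(R_2)$ together with $(q\circ p)(A_1)=q(A_2)$, and in (b) lifting $a$ (resp.\ $b$), using surjectivity of $R_1$ to produce a partner, and letting the singleton hypothesis pin down its image. The only difference is your added quotient-map argument transferring continuity from $q\circ p$ to $q$, which the paper omits since $q$ is already taken to be a continuous map; this is harmless extra care.
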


\begin{proof} (a): $qp (A_1) = q[p(A_1)] = q(A_2)$. Hence, $qp(A_1) = A_3$ iff $q(A_2) = A_3$.
Similarly, $ (qp \times qp)(R_1) =
(q \times q)[(p \times p)(R_1)] = (q \times q)(R_2)$. Hence, $ (qp \times qp)(R_1) = R_3$ iff
$ (q \times q)(R_2) = R_3$.

(b): Assume $(a,b) \in R_2$ and $R(a) = \{ b \}$. There exists $a_1 \in A_1$ such that
$p(a_1) = a$ and there exists $b_1 $ such that $(a_1,b_1) \in R_1$ because $R_1$ is surjective. Since
$p \times p$ maps $R_1$ into $R_2$, $(a,p(b_1)) \in R_2$. Since $R(a)$ is a singleton, $p(b_1) = b$ and so
$(a,b) = (p \times p)(a_1,b_1)$.  If, instead $R^{-1}(b) = \{ a \}$ then first lift $b$ and proceed as before.
\end{proof}\vspace{.5cm}

If $C $ is a closed subset of $A$ and $R$ is a closed relation on $A$ then the \emph{restriction} $R|C$ of $R$ to
$C$ is given by $R\cap (C \times C)$. When $(A,R)$ is a system, i. e. $R$ is surjective, then $(C,R|C)$ is a
\emph{subsystem} when $R|C$ is a surjective relation on $C$, i.e. when $C \subset R^{-1}(C) \cap R(C)$. For a system $(A,R)$
the closed set $C \subset A$
is called an transitive, topologically transitive, or chain transitive subset
when the restriction $(C,R|C)$ is a subsystem which satisfies the
corresponding property. Similarly, for the three recurrence properties and the three mixing properties.

If $p : (A,R) \to (B,S)$ and $C \subset A$ and $(C,R|C)$ is a subsystem of $(A,R)$ then
$(p(C),(p \times p)(R|C))$ is a system which satisfies any of the dynamics properties that $(C,R|C)$ does.
Since $(p \times p)(R|C) \subset S|p(C)$ it follows that $(p(C),S|p(C))$ is a subsystem satisfying the same dynamics properties.

 For compact spaces, an \emph{inverse sequence of spaces}\index{inverse sequence of spaces} is a
 sequence of continuous surjections $\{ p_{n+1,n} : A_{n+1} \tto A_n : n \in \N \}$.
 For $m > n$ we define $p_{m,n} : A_m \tto A_n$ to be the composition  $p_{n+1,n} \circ \cdots \circ p_{m,m-1}$.
 The \emph{inverse limit}\index{inverse limit} $A_{\infty}$ is defined by
\begin{equation}\label{1.01bb}
A_{\infty} \  = \  \{ \ z \in \Pi_{n \in \N} \ A_n \ : \ z_n = p_{n+1,n}(z_{n+1}) \quad \mbox{for all} \ n \in \N \ \}.
\end{equation}
 The surjection $p_n : A_{\infty} \tto A_n$ is the restriction of the projection to the $n^{th}$ coordinate.

 If $f$ is a surjective map on $X$ then we can let $A_n = X, p_{n+1,n} = f$ for all $n$. The resulting inverse limit space
 is just a relabeling of $\hat X$, the natural lift of $f$. To be precise, if $w \in X_{\infty}$ we define $z \in \hat X$ by
 $z_{-n} = w_n$ for all $n \in \N$ and $z_k = f^{k+1}(w_1)$ for $k = 0,1,\dots$.

We will say that the sequence \emph{bifurcates}\index{sequence bifurcates} when for every $n \in \N$ and $x \in A_n$ there exist $m > n$ and
$y_1 \not= y_2 \in A_m$ such that $p_{m,n}(y_1) = x = p_{m,n}(y_2) $.  It is clear that if the sequence bifurcates then
the limit space $A_{\infty}$ is perfect, i.e. it has no isolated points.  Conversely, if all the $A_n$'s
are finite and $A_{\infty}$ is
perfect (and so is a Cantor set), then the sequence bifurcates.

If all of the spaces $A_n$ are perfect then $A_{\infty}$ is perfect  whether the sequence bifurcates or not. In particular,
if $f $ is a surjective continuous map on a Cantor set $X$, then the natural homeomorphic lift $\hat X$ is
perfect and so is a Cantor set as well.

A map from $\{ p_{n+1,n} : A_{n+1} \tto A_n \}$ to $\{ q_{n+1,n} : B_{n+1} \tto B_n \}$ is a sequence $\{ h_n : A_n \to B_n \}$
of continuous maps such that $$q_{n+1,n} \circ h_{n+1} = h_n \circ p_{n+1,n}$$ for all $n \in \N$. By restricting
$\Pi_n \ h_n : \Pi_n \ A_n \to \Pi_n B_n $ we obtain
a continuous map $h_{\infty} : A_{\infty} \to B_{\infty}$ with
$q_n \circ h_{\infty} = h_n \circ p_n$ for all $n \in \N$. If each $h_n$ is surjective then for
any $z \in B_{\infty}$, $ \{ (q_n \circ h_{\infty})^{-1}(z) \}$ is a decreasing sequence of nonempty compact sets
and the nonempty intersection is $ (h_{\infty})^{-1}(z)$.  Hence, $h_{\infty}$ is surjective when the $h_n$'s are.

 By identifying $A_1 \times A_1 \times A_2 \times A_2$ with $A_1 \times A_2 \times A_1 \times A_2$
 we define  $(A_1 \times A_2, R_1 \times R_2)$. Then $Per(R_1 \times R_2) = Per(R_1) \cap Per(R_2)$.  The similar
 identification of $(A_1 \times A_2)^{\Z} $ with $A_1^{\Z} \times A_2^{\Z}$ identifies $(A_{R_1 \times R_2}, S_{R_1 \times R_2})$
 with $(A_{R_1} \times A_{R_2},S_{R_1} \times S_{R_2})$.

 Similarly, for inverse sequences $\{ p_{n+1,n} : A_{n+1} \tto A_n \}$ and $\{ q_{n+1,n} : B_{n+1} \tto B_n \}$
 we can naturally identify $(A \times B)_{\infty}$, the inverse limit of
 $\{ p_{n+1,n} \times q_{n+1,n}  : A_{n+1} \times B_{n+1} \tto A_n \times B_n \} $, with $A_{\infty} \times B_{\infty}$.

 An \emph{inverse sequence of systems}\index{inverse sequence of systems} is a sequence of continuous surjections
 $\{ p_{n+1,n} : (A_{n+1},R_{n+1}) \tto (A_n,R_n) : n \in \N \}$. With the above identifications, $R_{\infty}$, the
 inverse limit of $\{ p_{n+1,n} \times p_{n+1,n} : R_{n+1} \tto R_n \}$, is a closed surjective relation on $A_{\infty}$.
 So we say that the system $(A_{\infty},R_{\infty})$ is the limit of the inverse sequence of systems.
 The \emph{reverse system} $\{ p_{n+1,n} : (A_{n+1},R^{-1}_{n+1}) \tto (A_n,R^{-1}_n) : n \in \N \}$ has limit
$(A_{\infty},R^{-1}_{\infty})$.

\begin{df}\label{df1.02} An inverse sequence  of systems $\{ p_{n+1,n} : (A_{n+1},R_{n+1}) \tto (A_n,R_n) : n \in \N \}$ satisfies
the \emph{Shimomura Condition}\index{Shimomura Condition}  if for every $n \in \N$ there exists $m > n$ so that
$p_{m,n} \circ R_m : A_m \tto A_n$ is a mapping. \end{df}
\vspace{.5cm}

\begin{prop}\label{prop1.03} Let $\{ p_{n+1,n} : (A_{n+1},R_{n+1}) \tto (A_n,R_n) : n \in \N \}$ be an inverse sequence of systems.
\begin{itemize}
\item[(a)] If the sequence satisfies the Shimomura Condition then $R_{\infty}$ is a surjective continuous map on $A_{\infty}$.

\item[(b)] If the sequence and its reverse both satisfy the Shimomura Condition then $R_{\infty}$ is a homeomorphism on $A_{\infty}$.

\item[(c)] If for every $n \in \N$, $(a,b), (a,c) \in R_{n+1}$ implies $p_{n+1,n}(b) = p_{n+1,n}(c)$ then the sequence
satisfies the Shimomura Condition.

\item[(d)] If each $A_n$ is finite and $R_{\infty}$ is a mapping, then the sequence satisfies the Shimomura Condition.
\end{itemize}
\end{prop}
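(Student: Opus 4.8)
The plan is to read off the structure of $R_{\infty}$ directly from the inverse limit: a pair $(z,w) \in A_{\infty} \times A_{\infty}$ lies in $R_{\infty}$ exactly when $(z_k,w_k) \in R_k$ for every $k$, and we are already given that $R_{\infty}$ is closed and surjective. Since a map is continuous precisely when it is a closed relation, for (a) it suffices to prove that $R_{\infty}$ is single-valued. Fix $z \in A_{\infty}$ and suppose $(z,w),(z,w') \in R_{\infty}$. To see $w_n = w'_n$ for an arbitrary $n$, choose $m > n$ with $p_{m,n} \circ R_m$ a mapping, as granted by the Shimomura Condition. Then $w_m, w'_m \in R_m(z_m)$, so $p_{m,n}(w_m)$ and $p_{m,n}(w'_m)$ both lie in the singleton $(p_{m,n}\circ R_m)(z_m)$; but these are just $w_n$ and $w'_n$ by compatibility of the threads, whence $w = w'$. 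Thus $R_{\infty}$ is a surjective continuous map.

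Parts (b) and (c) are short reductions. For (b), apply (a) to the reverse sequence, whose limit is $(A_{\infty}, R_{\infty}^{-1})$, to conclude that $R_{\infty}^{-1}$ is also a surjective continuous map; then $R_{\infty}$ is a single-valued, injective, surjective self-relation, i.e. a continuous bijection of the compact space $A_{\infty}$, hence a homeomorphism. For (c), observe that the hypothesis says precisely that $p_{n+1,n}$ is constant on each set $R_{n+1}(a)$, which is nonempty by surjectivity; thus $p_{n+1,n} \circ R_{n+1}$ is a mapping and the choice $m = n+1$ witnesses the Shimomura Condition at every level $n$.

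The substance is (d), which is where I expect the real work. Fix $n$ and argue by contradiction: if no $m > n$ makes $p_{m,n} \circ R_m$ a mapping, then for each $m > n$ there are $(x_m, b_m), (x_m, c_m) \in R_m$ with $p_{m,n}(b_m) \neq p_{m,n}(c_m)$. Using that the projections from an inverse limit of surjections of finite spaces are surjective, lift these pairs to members $(\xi^{(m)}, \beta^{(m)})$ and $(\xi'^{(m)}, \gamma^{(m)})$ of $R_{\infty}$ whose $m$-th coordinates are $(x_m,b_m)$ and $(x_m,c_m)$ respectively. By finiteness of $A_n$ and pigeonhole, pass to a subsequence along which $p_{m,n}(b_m)$ is a fixed $u$ and $p_{m,n}(c_m)$ is a fixed $v$, with $u \neq v$; then, using compactness of $R_{\infty}$, refine further so that $(\xi^{(m)}, \beta^{(m)}) \to (\xi, \beta)$ and $(\xi'^{(m)}, \gamma^{(m)}) \to (\xi', \gamma)$ in $R_{\infty}$.

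The key limit observations are that $\xi = \xi'$ and that $\beta \neq \gamma$. The first holds because for each fixed coordinate $k$ and all large $m$ in the subsequence one has $\xi^{(m)}_k = p_{m,k}(x_m) = \xi'^{(m)}_k$, and convergence in a finite space is eventual equality. The second holds because $\beta^{(m)}_n = u$ and $\gamma^{(m)}_n = v$ along the subsequence, so $\beta_n = u \neq v = \gamma_n$. Writing $z := \xi = \xi'$, we obtain $(z,\beta),(z,\gamma) \in R_{\infty}$ with $\beta \neq \gamma$, contradicting the hypothesis that $R_{\infty}$ is a mapping. The main obstacle is exactly this limiting step: a single branching witness $x_m$ at each level is not enough on its own, and the pigeonhole refinement on the finite set $A_n$ is what prevents the two distinct images from collapsing to the same point in the limit. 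With those refinements in place the contradiction is immediate, establishing the Shimomura Condition at level $n$ and hence for the whole sequence.
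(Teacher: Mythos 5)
Your proposal is correct and follows essentially the same route as the paper: parts (a)--(c) are argued identically (single-valuedness via the Shimomura map $p_{m,n}\circ R_m$, the reverse sequence for (b), and $m=n+1$ for (c)), and part (d) is the paper's contrapositive ``V''-argument --- pigeonhole on the finitely many branch images in $A_n$ followed by a compactness extraction of a branching pair $(z,\beta),(z,\gamma)\in R_\infty$. The only difference is that you spell out the lifting-and-subsequence details that the paper compresses into ``by compactness,'' which is a faithful elaboration rather than a new idea.
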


\begin{proof}  (a): Assume $(x,u),(x,v) \in R_{\infty}$.  To show that $u = v$ it suffices to show that $u_n = v_n$ for
every $n \in \N$. Let $m > n$ be such that $p_{m,n} \circ R_m $ is a map.  Since $(x_m,u_m), (x_m,v_m) \in R_m$,
it follows that $u_n = p_{m,n}(u_m) = p_{m,n}(v_m) = v_n$. Hence, the surjective closed relation $R_{\infty}$ is a
surjective continuous map.

(b): If the the sequence and its reverse satisfy the condition, then $R_{\infty}$ and $R_{\infty}^{-1}$ are mappings. This implies
that $R_{\infty}$ is a homeomorphism with inverse $R_{\infty}^{-1}$.

(c):  The assumption says that $p_{n+1,n} \circ R_{n+1}$ is a mapping.

(d): Call $(a,b),(a,c) \in R_n$ with $b \not= c$ a \emph{V} in $R_n$. If the Shimomura Condition fails then there exists $n \in \N$
so that for every $m > n$ there exists a V in $R_m$ which projects to a V in $R_n$.
If $A_n$ is finite, there are only
finitely many V's in $R_n$. Hence, there exists a V in $R_n$ which is an image of V's in $R_m$ for $m$ arbitrarily large.
By compactness there is a V $(z,u),(z,v) \in R_{\infty}$ which projects to a V in $R_n$. Hence, $R_{\infty}$ is not a mapping.
\end{proof}\vspace{.5cm}

If $f$ is a surjective map on $X$ then with $A_n = X, p_{n+1,n} = f$, we obtain the inverse sequence of systems
with $(A_n,R_n) = (X,f)$ for all $n$ whose inverse limit is the natural lift $(\hat X,\hat f)$, as described above.
Notice that $f \circ f$ and
$f \circ f^{-1} = 1_X$ are maps and so the sequence and its reverse satisfy the Shimomura Condition.

If $(A,R)$ is a system and $N$ is a positive integer we define the \emph{$N$-fold discrete suspension}\index{discrete suspension}
$(A_N,R_N)$ with $A_N = A \times \{ 1, \dots, N \}$  and
\begin{equation}\label{1.02}
R_N = \{ \ ((a,i),(a,i+1)) \ : \ a \in A, i < N \ \} \cup \{ \ ((a,N),(b,0)) \ : \ (a,b) \in R \ \}.
\end{equation}
Clearly, $R_N$ is a map, or a homeomorphism, when $R$ is. Notice that
\begin{equation}\label{1.03}
((a,i),(b,j)) \in (R_N)^N \qquad \Longleftrightarrow \qquad (a,b) \in R \quad \mbox{and} \quad j = i.
\end{equation}

We can identify the successive suspensions $((X_M)_N,(R_M)_N)$ with $(X_{MN},R_{MN})$ by
\begin{equation}\label{1.04}
((x,i),j) \ \mapsto \ (x, j + N(i-1)). \hspace{3cm}
\end{equation}

The construction is functorial. If $p : (A,R) \to (B,S)$ then $p_N : (A_N,R_N) \to (B_N,S_N)$ where
$p_N = p \times 1_{\{1, \dots, N\}}$.  If $p$ is surjective then $p_N$ is.
Hence, if $\{ p_{n+1,n} : (A_{n+1},R_{n+1}) \tto (A_n,R_n)  \}$
is an inverse system then so is $\{ (p_{n+1,n})_N : ((A_{n+1})_N,(R_{n+1})_N) \tto ((A_n)_N,(R_n)_N) \}$. We leave to the reader
the easy proof of the following.

\begin{lem}\label{lem1.04} With the obvious identifications, the inverse limit of
$\{ (p_{n+1,n})_N : ((A_{n+1})_N,(R_{n+1})_N) \tto ((A_n)_N,(R_n)_N) \}$ becomes
$((A_{\infty})_N,(R_{\infty})_N)$. If the original sequence bifurcates or satisfies the
Shimomura Condition, then so does the $N$-fold suspension sequence. \end{lem}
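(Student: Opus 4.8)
The plan is to treat the lemma in two stages: first identify the inverse limit of the suspended sequence, as a system, with $((A_{\infty})_N,(R_{\infty})_N)$, and then check that bifurcation and the Shimomura Condition are each inherited. The one structural fact I rely on repeatedly is that suspension is functorial, so that the composite bonding maps of the suspended sequence are $(p_{m,n})_N = (p_{n+1,n})_N \circ \cdots \circ (p_{m,m-1})_N = p_{m,n} \times 1_{\{1,\dots,N\}}$; that is, they are the original composite bonding maps crossed with the identity on the suspension index.

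For the identification, the underlying space of $(A_n)_N$ is $A_n \times \{1,\dots,N\}$ and its bonding map is $p_{n+1,n} \times 1_{\{1,\dots,N\}}$, so feeding the constant sequence $\{1,\dots,N\}$ with identity bonding maps into the product identification already recorded above gives inverse limit space $A_{\infty} \times \{1,\dots,N\} = (A_{\infty})_N$. Concretely, a compatible sequence in $\Pi_n (A_n \times \{1,\dots,N\})$ must have all second coordinates equal to a common index and all first coordinates forming a point of $A_{\infty}$. It then remains to match the relations: a pair of limit points with common indices $i$ and $j$ lies in the inverse limit relation iff its level-$n$ projection $((a_n,i),(b_n,j))$ lies in $(R_n)_N$ for every $n$, and by the defining formula \eqref{1.02} this forces either $i<N$, $j=i+1$, $a_n=b_n$ for all $n$, or else $i=N$, $j$ the wraparound index, and $(a_n,b_n) \in R_n$ for all $n$. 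Comparing with $(R_{\infty})_N$ on $(A_{\infty})_N$ gives equality, the only point requiring care being that in the second case the levelwise conditions $(a_n,b_n)\in R_n$, together with the compatibility already guaranteed by membership in $A_{\infty}$, are by definition exactly the statement that the pair lies in $R_{\infty}$.

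Preservation of bifurcation is then immediate: given $n$ and $(x,i) \in (A_n)_N$, apply bifurcation of the original sequence to $x$ to obtain $m>n$ and $y_1 \neq y_2 \in A_m$ over $x$; then $(y_1,i) \neq (y_2,i)$ lie over $(x,i)$ under $p_{m,n} \times 1_{\{1,\dots,N\}}$. For the Shimomura Condition, fix $n$ and pick $m>n$ with $p_{m,n} \circ R_m$ a mapping, and I claim the same $m$ works: evaluating $(p_{m,n})_N \circ (R_m)_N$, every level $i<N$ sends $(a,i)$ to the single point $(p_{m,n}(a),i+1)$, while the top level sends $(a,N)$ to $(p_{m,n}\circ R_m)(a)$ crossed with the wraparound index, which is a singleton precisely because $p_{m,n}\circ R_m$ is a mapping. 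Thus the only genuine content is the relation-matching step of the identification; everything else is a direct unwinding of \eqref{1.02} and of functoriality.
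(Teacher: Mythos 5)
Your proposal is correct, and since the paper explicitly leaves the ``easy proof'' of this lemma to the reader, your argument -- the product identification of the limit space, the case analysis on the constant suspension indices to match the limit relation with $(R_{\infty})_N$, and the levelwise checks that bifurcation and the Shimomura Condition pass to the suspended sequence via $(p_{m,n})_N = p_{m,n} \times 1_{\{1,\dots,N\}}$ -- is precisely the intended direct verification. No gaps to report.
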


$\Box$ \vspace{.5cm}

For a continuous map $f$ on $A$ with $A$ compact, there are different definitions for topological transitivity, see \cite{AC}.
As defined above, we say a map $f$ is
 topologically transitive when $\NN f = A \times A$, or, equivalently, the \emph{hitting time set}
 $N_f(U,V) \not= \emptyset $ for all open,nonempty $U, V \subset A$. It is equivalent to $Trans_f \not= \emptyset$
 when $x \in Trans_f$ if $\{ f^i(x) : i \in \N \}$ is dense in $X$. A topologically transitive map is surjective.

The map is \emph{minimal}\index{minimal map}\index{map!minimal} when $Trans_f = A$. Equivalently, for every nonempty open $U \subset A$ the sequence
of open sets $\{ f^{-n}(U) : n \in \N \}$ covers $A$ (and so has a finite subcover).

We write $C_s(A,B)$ for the space of continuous surjections from $A$ to $B$,
  $H(A)$ for the homeomorphism group and $C_s(A)$ for the space of continuous surjections on $A$.  All are
 equipped with the sup metrics. We let $ CR(A), CT(A), CM(A)$\index{$CR(A)$}\index{$CT(A)$}\index{$CM(A)$} denote 
 the subsets of chain recurrent, chain transitive
  and chain mixing surjective continuous
 mappings on $A$. We let $TT(A)$\index{$TT(A)$} and $MM(A)$\index{$MM(A)$} denote the subsets of topologically transitive and minimal maps on $A$.
 A map $f \in C_s(A)$ is \emph{weak mixing}\index{mixing!weak} when $f \times f \in TT(A \times A)$. We let $WM(A)$\index{$WM(A)$} denote the weak mixing
 maps on $A$.

 \begin{prop}\label{prop1.05} Let $A$ be a compact metric space with $C(A)$  the space of continuous maps on $A$.

 (a) The sets $C_s(A), CR(A), CM(A)$ and $C(A;Per \supset Q) = \{ f : |f^n| \not= \emptyset$ for all $n \in Q \}$\index{$C(A;Per \supset Q)$}
 are closed subsets of $C(A)$ for $Q$ any subset of $\N$.

 (b) The sets $H(A), TT(A), MM(A), WM(A)$ and $\{ f \in H(X) : f(x) = x $ or $ \overline{\{ f^i(x) : i \in \Z \}} = X$ for all
 $x \in A \}$ are $G_{\dl}$ subsets of $C(A)$.  The set of $f \in C(A)$ which admit exactly one fixed point is a $G_{\dl}$
 subset of $C(A)$.
 \end{prop}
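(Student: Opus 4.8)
The plan is to establish closedness in (a) and the $G_\dl$ property in (b) piece by piece, using throughout two elementary facts about the sup metric on $C(A)$: a uniform limit of surjections is surjective (so $C_s(A)$ is closed, as already noted in the introduction), and for each fixed $n$ the iteration map $f \mapsto f^n$ is continuous on $C(A)$, since compactness of $A$ makes each $f$ uniformly continuous.

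For (a), the set $C(A;Per\supset Q)=\bigcap_{n\in Q}\{f:|f^n|\neq\emptyset\}$ is an intersection of closed sets: if $f_m\to f$ and $x_m\in|f_m^n|$, a convergent subsequence $x_m\to x$ gives $f^n(x)=x$ because $f_m^n(x_m)=x_m\to x$ while $f_m^n(x_m)\to f^n(x)$ by uniform convergence of $f_m^n$ to $f^n$. For $CR(A)$ and $CM(A)$ I would exploit that chain recurrence and chain mixing are defined through the relations $V_\ep\circ f\circ V_\ep$, which depend monotonically on $f$: the key inclusion is that $d(f,g)<\ep/3$ forces $V_{\ep/3}\circ g\circ V_{\ep/3}\subset V_\ep\circ f\circ V_\ep$. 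Consequently, if $f_m\to f$, then for any $\ep$ and large $m$ every chain property of $V_{\ep/3}\circ f_m\circ V_{\ep/3}$ (namely $|\OO(\cdot)|=A$, or $(\cdot)^N=A\times A$) transfers to $V_\ep\circ f\circ V_\ep$ by passing to orbit relations, resp. $N$-th powers. Letting $\ep\to0$ yields chain recurrence, resp. chain mixing, of $f$.

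For the easy $G_\dl$ sets in (b): $H(A)=C_s(A)\cap\{f\text{ injective}\}$, and injectivity is $G_\dl$ because $f\mapsto m_n(f):=\min\{d(f(x),f(y)):d(x,y)\geq 1/n\}$ is continuous and $f$ is injective iff $m_n(f)>0$ for every $n$. Fixing a countable base $\{U_j\}$ of nonempty open sets, each $\{f:f^n(U_i)\cap U_j\neq\emptyset\}$ is open (perturb a witness point), so $TT(A)=\bigcap_{i,j}\bigcup_n\{f:f^n(U_i)\cap U_j\neq\emptyset\}$ is $G_\dl$; and $WM(A)$ is the preimage of the $G_\dl$ set $TT(A\times A)$ under the continuous map $f\mapsto f\times f$, hence $G_\dl$. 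For $MM(A)$ I use that minimality means each $\bigcup_{n\geq1}f^{-n}(U_j)=A$, equivalently some finite subfamily covers $A$; the set $\{f:\bigcup_{n=1}^{N}f^{-n}(U_j)=A\}$ is open because $f\mapsto\min_x\max_{1\leq n\leq N}d(f^n(x),A\setminus U_j)$ is continuous and its positivity is an open condition, so $MM(A)=\bigcap_j\bigcup_N(\dots)$ is $G_\dl$. Finally, the $f$ with exactly one fixed point form $\{f:|f|\neq\emptyset\}\cap\{f:|f|\text{ has at most one point}\}$; the first factor is closed by (a), and the second is the complement of $\bigcup_n P_n$ with $P_n=\{f:\exists\,x\neq y\in|f|,\ d(x,y)\geq1/n\}$ closed by the same compactness argument as above, hence $G_\dl$.

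The hard part will be the set $D=\{f\in H(A):f(x)=x\text{ or }\overline{\{f^i(x):i\in\Z\}}=A\text{ for all }x\}$. I would write ``$x$ has dense orbit'' as $x\in W_j(f):=\bigcup_{i\in\Z}f^{-i}(U_j)$ for all $j$, decompose the open non-fixed set $\{x:f(x)\neq x\}=\bigcup_k K_k(f)$ with $K_k(f)=\{x:d(f(x),x)\geq1/k\}$ compact, and note that $f\in D$ iff for all $j,k$ there is $N$ with $K_k(f)\subset O_{j,N}(f):=\bigcup_{|i|\leq N}f^{-i}(U_j)$ (compact in open). Thus $D=H(A)\cap\bigcap_{j,k}\bigcup_N E_{j,k,N}$, and it remains to show each $E_{j,k,N}=\{f\in H(A):K_k(f)\subset O_{j,N}(f)\}$ is relatively open in $H(A)$. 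This is the crux, with two subtleties: the threshold set $K_k$ can jump upward under perturbation, which I would control by first shrinking $U_j$ to a $U_j'$ with $\overline{U_j'}\subseteq U_j$ and $K_k(f)\subset\bigcup_{|i|\leq N}f^{-i}(U_j')$, then using a finite-intersection argument to absorb the nearby compact set $\{x:d(f(x),x)\geq 1/k-\eta\}$ into $O_{j,N}(f)$; and, more seriously, $O_{j,N}$ involves negative iterates, so I need $g^{-i}\to f^{-i}$ uniformly as $g\to f$ in $H(A)$. The essential point that makes everything go through is that on $H(A)$ with $A$ compact, uniform convergence $g\to f$ does force $g^{-1}\to f^{-1}$ uniformly (a short subsequence-and-contradiction argument), so for $|i|\leq N$ all iterates converge uniformly and the margin between $U_j'$ and $U_j$ prevents $O_{j,N}$ from shrinking. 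Granting relative openness of each $E_{j,k,N}$, the set $D$ is a relatively $G_\dl$ subset of the $G_\dl$ set $H(A)$, hence $G_\dl$ in $C(A)$.
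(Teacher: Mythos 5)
Your proposal is correct, and covers every set listed in the statement; the routine items ($C_s(A)$, $C(A;Per\supset Q)$, $H(A)$, $TT(A)$, $WM(A)$, $MM(A)$) run essentially parallel to the paper's proof, but three pieces take genuinely different routes. For $CM(A)$ the paper does not argue from the definition at all: it invokes the characterization (\cite{A1}, Chapter 8, Exercise 22) that a chain transitive map fails to be chain mixing iff it factors over a nontrivial periodic orbit, which is an open condition; your perturbation inclusion $V_{\ep/3}\circ g\circ V_{\ep/3}\subset V_{\ep}\circ f\circ V_{\ep}$ for $d(f,g)<\ep/3$, combined with monotonicity of $R\mapsto \OO R$ and $R\mapsto R^N$, yields a self-contained argument that treats $CR(A)$ and $CM(A)$ uniformly and avoids the external citation. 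For the set $D$ of homeomorphisms whose non-fixed points all have dense $\pm$ orbits, the paper rewrites membership as the covering condition $X\subset G(f,\ep)\cup\bigcup_{k=-L}^{L}f^{-k}(U)$ and declares it open ``as above,'' i.e.\ by the same finite-closed-cover device used for $MM(A)$; that device can be made to handle the negative exponents without ever inverting, since for a homeomorphism $A_i\subset f^{m}(U)$ is the same as $f^{m}(A\setminus U)\subset A\setminus A_i$, a forward-image condition on a closed set. You instead confront the negative iterates head-on via the fact that $g\mapsto g^{-1}$ is continuous on $H(A)$ when $A$ is compact (true, and correctly identified by you as the crux), together with the nested-compact-sets absorption of the enlarged threshold sets $\{x: d(f(x),x)\geq 1/k-\eta\}$ into $\bigcup_{|i|\leq N}f^{-i}(U_j')$; this costs you two technical lemmas but makes explicit what the paper hides in ``as above,'' and your reformulation of $f\in D$ as: for all $j,k$ there exists $N$ with $K_k(f)\subset O_{j,N}(f)$, is a correct relatively-$G_{\dl}$-in-$H(A)$ decomposition. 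Finally, for the maps with exactly one fixed point, the paper runs a quantitative argument with approximate fixed-point sets $K(f,\dl)$, whereas you intersect the closed set $\{f:|f|\neq\emptyset\}$ with the complement of the $F_{\sigma}$ set $\bigcup_n P_n$; your version is shorter and equally rigorous.
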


 \begin{proof} (a)  If $U$ is a proper subset of $A$ then the condition $f(A) \subset U$ is an open condition by
 compactness. Hence, $C_s(A)$ is closed.
 Similarly, if $U$ is an open subset of $A \times A$ then $\CC f \subset U$ implies that
 $\overline{\OO (V_{\ep} \circ f \circ V_{\ep})} \subset U$ for
 some $\ep > 0$. It thus follows that $\CC f \subset U$ is an open condition. Hence, $CR(A) =
 \{ f : 1_A \subset \CC f \}$ and $CT(A) = \{ f : A \times A \subset \CC f \} $ are closed in $C_s(A)$, see also \cite{A1} Chapter 7.
 By \cite{A1} Exercise 8.22 it follows that a chain transitive $f$ is not chain mixing iff it factors over a nontrivial periodic
 orbit, see also \cite{RW}.  This last is an open condition and so $CM(A)$ is closed as well. The condition $|f^n| = \emptyset$
 is equivalent to
 $inf_{x \in A} \ d(f^n(x),x) \ > \ 0$ and this is an open condition. It follows that $C(A;Per \supset Q)$ is closed.

 (b) For every $\ep > 0$ the condition $(f \times f)[(X \times X) \setminus V_{\ep} \subset (X \times X) \setminus 1_X$ is an
 open condition. Intersecting over rational $\ep$ we see that the condition that $f$ be injective is $G_{\dl}$.

  For fixed open sets $U, V$ the condition
 $N_f(U,V) \not= \emptyset$ is an open condition. Intersecting over $U, V$ in a countable basis we see that
 $TT(A)$ is $G_{\dl}$.

 The map $q : C_s(A) \to C_s(A \times A)$ given by $f \mapsto f \times f$ is continuous. Hence, $WM(A) = q^{-1}(TT(A \times A))$
 is $G_{\dl}$.

 A map $f$ is minimal iff for every open $U \subset $ nonempty, there exists $L \in \N$ such that
  $X \subset \bigcup_{k = 0}^L \ f^{-k}(U)$.
 For each open set $U$, this is an open condition because it is
 equivalent to finding a closed finite cover $\{ A_0, \dots,A_N \}$ of $X$ such that
 for $i = 0,\dots, N, \ f^{k_i}(A_i) \subset U$, for some $0 \leq k_i \leq L$. Intersecting over a countable basis we see
 that $f$ minimal is a $G_{\dl}$ condition.

 Given $\ep > 0$, let $ G(f,\ep) = \{ x : d(f(x),x)) < \ep \}$. It is easy to check that for a homeomorphism $f$,
 the condition that every point $x$ is
 either fixed by $f$ or has a dense $\pm$ orbit is equivalent to the condition that for every $\ep > 0$ and every
 nonempty open $U \subset A$ there exists $L \in \N$ such that $X \subset G(f,\ep) \ \cup \  \bigcup_{k = -L}^L \ f^{-k}(U)$.
 For each open $U$ and $\ep > 0$ this is an open condition as above. Notice that if $A$ is a closed set then
 $A \subset G(f,\ep)$ iff $sup_{x \in A} \ d(f(x),x) \ < \ \ep$.  Intersecting over rational $\ep$ and $U$ in a countable basis
 we obtain a $G_{\dl}$ condition.

 Given $\ep \geq 0$, let $ K(f,\ep) = \{ x : d(f(x),x)) \leq \ep \}$ so that $K(f,0) = |f|$. Observe that if $U$ is any open set
 which contains $K(f,\ep)$ for some $\ep \geq 0$ then there exists $\ep_1 > \ep$ such that $K(f,\ep_1) \subset U$. A mapping
 $f$ has at most one fixed point iff for every $\ep > 0$ there exists $\dl > 0$ so that $x_1, x_2 \in K(f,\dl)$ implies
 $d(x_1,x_2) < \ep$. That is, if $K(f,\dl) \times K(f,\dl) \subset V_{\ep}$. For each $\ep > 0$ and $\dl > 0$ fixed this is
 an open condition on $f$. Taking the union over $\dl > 0$ and then the intersection over rational $\ep > 0$ we obtain
 the $G_{\dl}$ condition. Intersect with the closed set of $f$ such that $|f| \not= \emptyset$ and we obtain the $G_{\dl}$
 set of maps which admit a unique fixed point.
 \end{proof}\vspace{.5cm}

 {\bfseries Remark:} There are different definitions for topological transitivity, see \cite{AC}. We call a map $f$
 topologically transitive when $\NN f = A \times A$ which is equivalent to nonempty hitting times sets $N(U,V)$ as described
 above. It is equivalent to $Trans_f \not= \emptyset$ when $x \in Trans_f$ if $\{ f^i(x) : i \in \N \}$ is dense in $X$.
 On the other hand, if $(A,f)$ is obtained by compactifying $\Z$ so that the translation map $n \mapsto n + 1$ extends to
 the homeomorphism $f$ on $A$ which is not topologically transitive in this sense although it does have a dense $\pm$ orbit.
 Happily, when the space is perfect then all definitions agree. For example, if $A$ is perfect and the $\pm$ orbit of $x$ is
 dense then either $x \in Trans_f$ or $x \in Trans_{f^{-1}}$. To see this observe that $x$ is either in the closure
 of $\{ f^i(x) : i \in \N \}$ or $\{ f^{-i}(x): i \in \N \}$.  Suppose the first, then there is a sequence $i_n \to \infty$
 such that $f^{i_n}(x) \to x$.  So for every $k \in \N$, $f^{i_n - k}(x) \to f^{-k}(x)$.  Thus, the entire $\pm$ orbit is in the
 closure of the forward orbit and so $x \in Trans_f$.  Otherwise, $x \in Trans_{f^{-1}}$ which implies that $f^{-1}$ is topologically
 transitive. Since the inverse of a topologically transitive homeomorphism is topologically transitive (e.g.
 $\NN (f^{-1}) = (\NN f)^{-1}$), it follows from either case that $f$ is topologically transitive.
 \vspace{.5cm}

 If $1 \in Per(f)$ then $Per(f) = \N$. We denote will write  $C_s(X; 1)$ for $C_s(X; Per \supset \{ 1 \})$, the set of surjective
 maps which admit a fixed point. Similarly, we will write $H(X;1)$ for $ H(X) \cap C_s(X;1)$ and
 $ CM(X; 1)$ for $CM(X) \cap C_s(X;1)$ etc. We will write $H(X; 1!)$ for the set of homeomorphisms $f$ such that
 $f$ has a unique fixed point and if $x \in X$ is not the fixed point then the $\pm$ orbit $\{ f^i(x) : i \in \Z \}$ is dense in $X$.
\vspace{.5cm}

%
%
%

 We will apply all this to two special cases: $A = X$ a \emph{Cantor space}, a Cantor set equipped with an ultrametric $d$, i.e.
 $$d(x,y) \quad \leq \quad max(d(x,z),d(z,y)) \hspace{2cm}$$ or to $A$ a finite set with the zero-one metric, also an ultrametric.
 Notice that if $A$ is a finite set
 then the metrics $d = max_{i = 1}^{N} \ d \circ (\pi_i \times \pi_i)$ on  $A^{N}$ and
 $d = max_{i \in \Z} \ \frac{1}{2^{|i|}}d  \circ (\pi_i \times \pi_i)$ on $A^{\Z}$ are ultrametrics.
%
  With $d$ an ultrametric $V_{\ep} = \{ (x,y) : d(x,y) < \ep \}$ is a
 clopen equivalence relation.

 \vspace{1cm}

 \section{Representations of Mappings via Indexed Partitions}\label{representations}
\vspace{.5cm}

 Our spaces $X,X_1, X_2,$ etc are all Cantor spaces, i.e. nonempty, zero-dimensional, perfect, compact metric spaces
 equipped with ultra-metrics.
 The maps are assumed to be continuous. We repeatedly use
 \emph{The Uniqueness of Cantor}\index{Uniqueness of Cantor}, the observation that all Cantor spaces are homeomorphic and, in particular,
 as nonempty clopen subsets of a Cantor space are Cantor spaces they
  are all homeomorphic to one another. A \emph{decomposition}\index{decomposition} $\A$ of $X$ is a finite, pairwise disjoint cover of $X$ by
 nonempty clopen subsets. Since the metric $d$ on $X$ is an ultrametric, the set of balls $\{ V_{\ep}(x) : x \in X \}$ is a
 decomposition of $X$ for any $\ep > 0$.

 Let $\I$ denote the countable set of all nonempty, finite subsets of $\N^n$ for $n = 1,2,\dots$. Notice that if $\phi : I_1 \to I_2$
 is a nonempty relation between  elements of $\I$ then $\phi \in \I$. We regard $\I$ as a discrete set and the elements of $\I$ as
 finite discrete spaces. Recall that a discrete space uses the zero-one metric $d$.

 An \emph{indexed partition} (hereafter, just
 a \emph{partition})\index{partition}\index{partition!indexed}
 is a continuous surjection $\al : X \tto I$ with $I \in \I$. We define
 $\A^{\al} = \{ A^i = \al^{-1}(i) : i \in I \}$ to be the \emph{associated decomposition}\index{decomposition!associated}
 of $X$.  If $\A_1, \A_2$ are decompositions then $\A_1$ \emph{refines}\index{decomposition!refinement} $\A_2$ iff for every
 $A_1 \in \A_1$ there exists a -necessarily unique- $A_2 \in \A_2$ such that $A_1 \subset A_2$.
 We will say of partitions $\al_1 : X \tto I_1, \al_2 : X \tto I_2$ that $\al_1$ refines $\al_2$ when
 $\A^{\al_1}$ refines $\A^{\al_2}$.

If $\A$ is any decomposition of $X$ and $I \in \I$ has the same cardinality as $\A$ then
 there is a partition $\al : X \tto I$ such that $\A^{\al} = \A$. Since there are decompositions of any positive finite
 cardinality, it follows that for any $I \in \I$  there exist  partitions $\al : X \tto  I$.

 Let $\CC(X,\I)$ denote the set of all indexed  partitions. Each clopen subset of $X$ is a finite union of basic sets and
 so there are only countably many clopen sets. Hence,for each $I \in \I$ there are only countably many partitions
 $\al : X \tto I$.  Since $\I$ is countable, it follows that $\CC(X,\I)$ is countable.  We give it the discrete topology.

 If $\al_1 : X_1 \tto I_1$ and $\al_2 : X_2 \tto I_2$ are  partitions
 then $\al_1 \times \al_2 : X_1 \times X_2 \tto I_1 \times I_2$ is a  partition. If
 $X_1 = X_2 = X$ then we write $\al_1 \otimes \al_2 : X \to I_1 \times I_2$ by $\al_1 \otimes \al_2 (x) = (\al_1(x),\al_2(x))$.
 This is usually not surjective and so is only an indexed partition
 when we restrict the range to $(\al_1 \otimes \al_2)(X) \subset I_1 \times I_2$.

The \emph{mesh}\index{mesh} of a finite collection $\A$ of sets is  $max \ diam \{ A : A \in \A \}$. For a partition $\al : X \to I$
the mesh of $\A^{\al}$, also called the \emph{mesh} of $\al$, is
\begin{equation}\label{2.01}
max \{ d(x,y) : \al(x) = \al(y)  \} \quad = \quad inf \{ \ep : (\al \times \al)^{-1}(1_I) \subset V_{\ep} \}.
\end{equation}
 The \emph{thickness}\index{thickness} of $\al$ is the minimum of the diameters, i. e.
 $$ min \ \{ \ max \{ d(x,y) : \al(x) = \al(y) = i \} \ : \ i \in I \ \}.$$
 The \emph{Lebesgue number}\index{Lebesgue number} of $\al$ is
 \begin{equation}\label{2.02}
 min \{ d(x,y) : \al(x) \not= \al(y) \} \quad = \quad max \ \{ \ep : V_{\ep} \subset (\al \times \al)^{-1}(1_I) \}.
 \end{equation}
 Note that $ (\al \times \al)^{-1}(1_I) = \bigcup_{i \in I} \ A^i \times A^i $ is a clopen neighborhood of the diagonal.
 If a set has diameter
 less than the Lebesgue number of $\al$ then $\al$ is constant on it and so it is contained in an element of $\A^{\al}$.

 \begin{prop}\label{prop2.01} Let $\al : X \tto I$ and $\bt : X \tto J$ be partitions.
 \begin{enumerate}
 \item[(a)] The following are equivalent.
 \begin{itemize}
 \item[(i)] $\al$ refines $\bt $.
 \item[(ii)] There exists  $\pi : I \tto J$ such that $\pi \circ \al = \bt $.
 \item[(iii)] The relation $\bt \circ \al^{-1} : I \to J$ is a map.
 \item[(iv)] $\bt $ is constant on every element of $\A^{\al}$.
 \end{itemize}
When these conditions hold, $\pi = \bt \circ \al^{-1}$ is the unique map such that $\pi \circ \al = \bt $.

 \item[(b)] If the mesh of
 $\al$ is less than Lebesgue number of $\bt $, then $\al$ refines $\bt $. If, in addition,
 $mesh \ \al \ < \ thickness \ \bt $ then each element of $\A^{\bt}$ contains at least two elements of $\A^{\al}$, or, equivalently,
 $\al$ is not constant on any element of $A^{\bt }$.
 \end{enumerate}
 \end{prop}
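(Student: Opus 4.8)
The plan is to treat part (a) as a short cycle of implications anchored by the direct equivalence between the refinement condition (i) and the constancy condition (iv), and to treat part (b) by converting the mesh/Lebesgue/thickness inequalities into diameter bounds on the individual blocks $A^i = \al^{-1}(i)$ and $B^j = \bt^{-1}(j)$, then invoking the observation recorded just before the statement: a set of diameter smaller than the Lebesgue number of $\bt$ is contained in a single block of $\A^{\bt}$.

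For (a), I would first establish (i) $\Leftrightarrow$ (iv) straight from the definition of refinement. Saying $\al$ refines $\bt$ means each block $A^i$ is contained in a (necessarily unique) block $\bt^{-1}(j)$, and $A^i \subset \bt^{-1}(j)$ is exactly the assertion that $\bt$ takes only the value $j$ on $A^i$; ranging over $i \in I$ gives (iv). I then close the cycle (iv) $\Rightarrow$ (ii) $\Rightarrow$ (iii) $\Rightarrow$ (iv). For (iv) $\Rightarrow$ (ii), define $\pi(i)$ to be the constant value of $\bt$ on $A^i$: this is well defined by (iv), satisfies $\pi \circ \al = \bt$ by construction, and is surjective onto $J$ because $\bt = \pi \circ \al$ is surjective while $\al$ maps onto $I$. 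For (ii) $\Rightarrow$ (iii), the key computation uses that $\al^{-1}$ sends $i$ to the whole block $A^i$, so $(\bt \circ \al^{-1})(i) = \bt(A^i) = \pi(\al(A^i)) = \{\pi(i)\}$, a singleton; hence $\bt \circ \al^{-1}$ is a map and in fact equals $\pi$. For (iii) $\Rightarrow$ (iv), if $\bt \circ \al^{-1}$ is a map then $\bt(A^i) = (\bt \circ \al^{-1})(i)$ is a singleton for every $i$, which is precisely constancy of $\bt$ on $A^i$. The uniqueness clause then falls out of the (ii) $\Rightarrow$ (iii) computation: any admissible $\pi$ is forced to equal $\bt \circ \al^{-1}$.

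For (b), each block $A^i$ has diameter $\max\{d(x,y) : \al(x) = \al(y) = i\}$, which is at most the mesh of $\al$. Under the hypothesis $\mathrm{mesh}\,\al$ less than the Lebesgue number of $\bt$, this diameter lies below that Lebesgue number, so by the cited observation $A^i$ sits inside a block of $\A^{\bt}$; ranging over $i$ yields (i), hence refinement. For the second assertion, the thickness of $\bt$ is by definition the minimum diameter of a block $B^j$, so every $B^j$ has diameter at least the thickness of $\bt$, hence strictly greater than $\mathrm{mesh}\,\al$. Since $\al$ refines $\bt$, each $B^j$ is a union of blocks $A^i$; if it were a single such block, its diameter would be at most $\mathrm{mesh}\,\al$, contradicting $\mathrm{diam}\,B^j \geq \mathrm{thickness}\,\bt > \mathrm{mesh}\,\al$. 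Thus each $B^j$ contains at least two blocks of $\A^{\al}$, and this is the same as $\al$ being nonconstant on $B^j$, since $\al$ nonconstant on $B^j$ means it takes two values there and each corresponding $A^i$ lies in $B^j$ by refinement.

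I expect no serious obstacle: the proposition is essentially a careful unwinding of definitions. The one place to stay attentive is the relation-algebra in the (ii) $\Leftrightarrow$ (iii) step — tracking that $\al^{-1}$ sends the index $i$ to the entire block $A^i$ and that surjectivity of $\pi$ must be verified rather than assumed — together with keeping the three numerical invariants straight, namely mesh as a max over blocks, Lebesgue number as a minimal separation, and thickness as a minimal block diameter.
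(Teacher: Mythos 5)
Your proposal is correct and takes essentially the same approach as the paper: part (a) is the same definition-unwinding, with your pointwise computation $(\bt \circ \al^{-1})(i) = \bt(A^i) = \{\pi(i)\}$ being the concrete form of the paper's relation-algebra identity $\pi = \pi \circ \al \circ \al^{-1} = \bt \circ \al^{-1}$, and part (b) uses the identical contrapositive diameter comparison (a block of $\A^{\bt}$ containing only one block of $\A^{\al}$ would force $mesh \ \al \ \geq \ thickness \ \bt$). The only cosmetic difference is that you anchor the cycle at (i) $\Leftrightarrow$ (iv) and reach (ii) from (iii) via (iv), whereas the paper proves (i) $\Leftrightarrow$ (ii) and (ii) $\Leftrightarrow$ (iii) directly and treats (iii) $\Leftrightarrow$ (iv) as obvious.
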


  \begin{proof} (a) (i) $\Rightarrow$ (ii): Define $\pi(i) = j$ if $\al^{-1}(i) \subset \bt^{-1}(j)$.

  (ii) $\Rightarrow$ (i): If $\pi(i) = j$ and  $\pi \circ \al = \bt $ then
  $\al^{-1}(i) \subset  (\pi \circ \al)^{-1}(j) = \bt^{-1}(j)$.

  (ii) $\Rightarrow$ (iii): Because $\al$ is a surjective map, $\al \circ \al^{-1} = 1_I$ and so
 $\pi \circ \al = \bt $ implies $\pi =  \pi \circ \al \circ \al^{-1} = \bt \circ \al^{-1}$.

 (iii) $\Rightarrow$ (ii): Because $\al$ is a map $1_X \subset  \al^{-1} \circ \al$. Hence,
 $\bt \subset \bt \circ \al^{-1} \circ \al$. As the composition of maps, the latter is a map
 and inclusion between maps implies equality. Hence, $\pi \circ \al = \bt $ with $\pi = \bt \circ \al^{-1}$.

 (iii) $\Leftrightarrow$ (iv): Obvious.

 (b)  The first part is clear from the definition of the Lebesgue number.

 If $B \in \A^{\bt }$ contains a unique $A \in \A^{\al}$ then since $\A^{\al}$ refines $\A^{\bt }$, $A = B$. Then
 $mesh \ \al \ \geq \ diam \ A \ = \ diam \ B \ \geq \ thickness \ \bt $.
 \end{proof}\vspace{.5cm}

 \begin{prop}\label{prop2.02}  Let $I, J \in \I$.

 (a) If $\al : X \tto I$ is a partition and $\pi : J \tto I$, then there exists $\bt : X \tto J$ such that $\pi \circ \bt = \al$.

 (b)  If $\al_1 : X_1 \tto I$ and $\al_2 : X_2 \tto I$ are partitions
  then there exists
  a homeomorphism $h :X_1 \to X_2$ such that $\al_1 = \al_2 \circ h$.
 \end{prop}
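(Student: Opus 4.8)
The plan is to handle both parts by direct construction, relying on the Uniqueness of Cantor together with the observation (noted above) that a Cantor space admits decompositions of any positive finite cardinality and that every nonempty clopen subset of a Cantor space is again a Cantor space.

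For part (a), I would work with the decomposition $\A^{\al} = \{ A^i = \al^{-1}(i) : i \in I \}$ into nonempty clopen pieces. Since $\pi : J \tto I$ is surjective, each fiber $\pi^{-1}(i) \subset J$ is nonempty and finite. Because $A^i$ is a nonempty clopen subset of $X$, it is itself a Cantor space, so it can be partitioned into exactly $|\pi^{-1}(i)|$ nonempty clopen pieces, which I index bijectively by $\pi^{-1}(i)$. Defining $\bt$ to be constant, equal to that index, on each such piece yields a locally constant, hence continuous, map $\bt : X \to J$. By construction $\bt(x) \in \pi^{-1}(\al(x))$ for every $x$, so $\pi \circ \bt = \al$; and $\bt$ is surjective because every $j \in J$ lies in some $\pi^{-1}(i)$ and the corresponding piece of $A^i$ is nonempty. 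Thus $\bt : X \tto J$ is the required partition.

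For part (b), I would glue homeomorphisms over the common index set. For each $i \in I$ the fibers $A_1^i = \al_1^{-1}(i)$ and $A_2^i = \al_2^{-1}(i)$ are nonempty clopen subsets of the Cantor spaces $X_1, X_2$, hence are themselves Cantor spaces, and so by the Uniqueness of Cantor there is a homeomorphism $h_i : A_1^i \to A_2^i$. Since $\{ A_1^i : i \in I \}$ and $\{ A_2^i : i \in I \}$ are finite clopen decompositions of $X_1$ and $X_2$, the map $h : X_1 \to X_2$ defined by $h|_{A_1^i} = h_i$ is a well-defined bijection that is continuous with continuous inverse, i.e. a homeomorphism. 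For $x \in A_1^i$ we have $h(x) \in A_2^i$, so $\al_2(h(x)) = i = \al_1(x)$, giving $\al_2 \circ h = \al_1$.

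Neither part presents a genuine obstacle; the only points requiring care are the surjectivity of $\bt$ in (a) and the continuity of $h$ and $h^{-1}$ in (b), both of which are immediate because the relevant decompositions consist of finitely many clopen sets. The essential input in each case is the Uniqueness of Cantor together with the flexibility to split a clopen Cantor set into a prescribed finite number of nonempty clopen pieces.
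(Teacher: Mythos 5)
Your proof is correct and follows essentially the same route as the paper: in (a) it chooses a partition of each fiber $\al^{-1}(i)$ onto $\pi^{-1}(i)$ and concatenates, and in (b) it uses the Uniqueness of Cantor to pick fiberwise homeomorphisms $\al_1^{-1}(i)\to\al_2^{-1}(i)$ and glues them over the finite clopen decomposition. The paper states these steps more tersely; you have merely spelled out the continuity and surjectivity checks that it leaves implicit.
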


 \begin{proof} (a): For each $i \in I$, $\al^{-1}(i)$ is a nonempty clopen subset of $X$ and so is a Cantor space.
 Since $\pi^{-1}(i)$ is a nonempty subset of $J$, it is an element of $\I$. We can
 choose $\bt : \al^{-1}(i) \tto \pi^{-1}(i)$ for all $i \in I$ and concatenate.

 (b): Choose a homeomorphism between the Cantor spaces $h : \al_1^{-1}(i) \to \al_2^{-1}(i) $ for all $i \in I$ and concatenate.
\end{proof} \vspace{.5cm}

  If $\al: X_2 \tto I$ is a partition and $f : X_1 \to X_2$ is a continuous map
 then we write $\al f : X_1 \to I$ for $\al \circ f$.  If $f$ is surjective, i.e. $f \in C_s(X)$, then $\al f$ is
 a partition.

 If $\al : X_2 \tto I$ is a partition then for continuous maps $f, g : X_1  \to X_2$
 we write $f \sim_{\al} g$ when $\al  f = \al  g$, or, equivalently,
 if $f(x) $ and $g(x)$ lie in the same member of the decomposition $\A^{\al}$ of $X_2$ for all $x \in X_1$.
  This defines a clopen equivalence relation
 on $C_s(X_1,X_2)$. Clearly, $f \sim_{\al} g$ implies $d(f,g) \ \leq \ mesh \ \al$ and so we can use these equivalence
 relations to measure closeness of approximations between maps in $C(X)$.

 \begin{cor}\label{cor2.03} If  $\al : X_2 \tto
 I$ is a partition and $p : X_1 \tto X_2$, then there
 exists a homeomorphism $h : X_1 \to X_2$ such that $p \sim_{\al} h$. \end{cor}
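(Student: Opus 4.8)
The plan is to reduce the statement directly to Proposition \ref{prop2.02}(b), whose hypothesis is that we have two partitions of (possibly different) Cantor spaces sharing a common index set. The only preparatory observation needed is that composing $\al$ with the surjection $p$ produces such a partition on $X_1$.

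First I would note that, as recorded in the paragraph immediately preceding the Corollary, if $\al : X_2 \tto I$ is a partition and $p \in C_s(X_1,X_2)$ is a continuous surjection, then $\al p = \al \circ p : X_1 \to I$ is again a continuous surjection onto $I$, hence a partition of $X_1$ indexed by the same set $I \in \I$. Thus I now have two partitions with a common index set: $\al p : X_1 \tto I$ on $X_1$ and $\al : X_2 \tto I$ on $X_2$.

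Next I would apply Proposition \ref{prop2.02}(b) with the substitutions $\al_1 = \al p$ (a partition $X_1 \tto I$) and $\al_2 = \al$ (a partition $X_2 \tto I$). That proposition supplies a homeomorphism $h : X_1 \to X_2$ satisfying $\al_1 = \al_2 \circ h$, i.e. $\al p = \al \circ h = \al h$. By the very definition of the equivalence relation $\sim_{\al}$ on $C_s(X_1,X_2)$ — namely that $f \sim_{\al} g$ iff $\al f = \al g$ — the identity $\al p = \al h$ says precisely that $p \sim_{\al} h$, which is the desired conclusion.

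I do not expect any genuine obstacle here: the statement is essentially a repackaging of Proposition \ref{prop2.02}(b). The single point worth checking carefully is that $\al p$ really is surjective onto all of $I$ (so that the index sets of the two partitions match and the proposition applies), but this is immediate from the surjectivity of both $p$ and $\al$. Consequently the whole argument is a short composition-and-cite, with the conceptual content entirely contained in the already-established realization result for partitions with a fixed index set.
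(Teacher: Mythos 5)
Your proposal is correct and is exactly the paper's own proof: apply Proposition \ref{prop2.02}(b) with $\al_1 = \al \circ p$ and $\al_2 = \al$, and read off $\al p = \al h$ as $p \sim_{\al} h$. Nothing further is needed.
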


 \begin{proof} Apply Proposition \ref{prop2.02}(b) with $\al_2 = \al$ and $\al_1 = \al \circ p$.
\end{proof} \vspace{.5cm}

 {\bfseries Remark:}  With $X_1 = X_2 = X$ it follows that $H(X)$ is dense in $C_s(X)$. 
\vspace{.5cm}

 For $f$ a closed relation on $X$ and a partition $\al : X \tto  I$ , we let $f^{\al} = (\al \times \al)(f) \subset I \times I$.
 So $(i,j) \in f^{\al}$ iff there exists $(x,y) \in f$ such that $\al(x) = i$ and $\al(y) = j$ and so
 iff $A^i \cap f^{-1}(A^j) \not= \emptyset$.
  Clearly, if the relation $f$ is a surjective relation on $X$ then
 $f^{\al}$ is a surjective relation on $I$ and  $\al : (X,f) \tto (I,f^{\al})$ is a surjective system map.

 If $f$ is a surjective map, i.e. $f \in C_s(X)$, then
 \begin{equation}\label{2.02a}
 (\al \otimes \al f)(X) \ = \ f^{\al}, \hspace{3cm}
 \end{equation}
 and so $\al \otimes \al f : X \tto f^{\al}$ is a partition.

 \begin{prop}\label{prop2.04} If $\al : X \tto I$ is a partition and $\ep > 0$ is less than the Lebesgue number of $\al$,
then  $f^{\al} = (V_{\ep} \circ f \circ V_{\ep})^{\al}$ for any closed surjective relation $f$ on $X$.
 \end{prop}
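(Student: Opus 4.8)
The plan is to establish the two inclusions separately, both resting on a single consequence of the hypothesis that $\ep$ is less than the Lebesgue number of $\al$: namely $V_{\ep} \subset (\al \times \al)^{-1}(1_I)$. By \eqref{2.02} this says exactly that $d(x,y) < \ep$ forces $\al(x) = \al(y)$, i.e. every $\ep$-ball lies in a single member of $\A^{\al}$.

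For the inclusion $f^{\al} \subset (V_{\ep} \circ f \circ V_{\ep})^{\al}$, I would observe that $V_{\ep}$ is reflexive, since $d(x,x) = 0 < \ep$, so $1_X \subset V_{\ep}$ and hence $f = 1_X \circ f \circ 1_X \subset V_{\ep} \circ f \circ V_{\ep}$. Applying the image operation $\al \times \al$, which is monotone with respect to inclusion, gives the desired containment of relations on $I$. This direction uses only reflexivity of $V_{\ep}$, not the Lebesgue-number bound.

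For the reverse inclusion $(V_{\ep} \circ f \circ V_{\ep})^{\al} \subset f^{\al}$, I would take $(i,j) \in (V_{\ep} \circ f \circ V_{\ep})^{\al}$ and unwind the definitions: there is a pair $(x,y)$ in the composite relation with $\al(x) = i$ and $\al(y) = j$, and by the definition of composition there are $u, v \in X$ with $d(x,u) < \ep$, $(u,v) \in f$, and $d(v,y) < \ep$. Now the Lebesgue-number property, applied to each of the two outer $V_{\ep}$ factors, yields $\al(u) = \al(x) = i$ and $\al(v) = \al(y) = j$. Since $(u,v) \in f$, the pair $(i,j) = (\al(u),\al(v))$ lies in $(\al \times \al)(f) = f^{\al}$. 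Combining the two inclusions gives the stated equality.

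There is no serious obstacle here; the only point requiring care is bookkeeping the order of composition so that the Lebesgue-number hypothesis is invoked correctly on both the left and right copies of $V_{\ep}$. The argument is purely combinatorial and uses no property of $f$ beyond the existence of the single pair $(u,v) \in f$, so it applies verbatim to any closed surjective relation on $X$.
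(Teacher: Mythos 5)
Your proof is correct and follows essentially the same route as the paper: the Lebesgue-number hypothesis gives $V_{\ep} \subset (\al \times \al)^{-1}(1_I)$, so points related by $V_{\ep}$ have the same $\al$-value, which yields the nontrivial inclusion, while the trivial inclusion comes from reflexivity of $V_{\ep}$. The paper merely compresses the two inclusions into one line; your separation of them, and the remark that only one direction needs the Lebesgue number, is the same argument spelled out in more detail.
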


 \begin{proof} If $\ep$ is less than the Lebesgue number then the clopen equivalence relation $V_{\ep}$
 is contained in $(\al \times \al)^{-1}(1_I)$.

 If $(x,y) \in f$ and $(x,x_1), (y,y_1) \in V_{\ep}$ then $\al(x) = \al(x_1)$ and $\al(y) = \al(y_1)$.  Hence,
 $(\al \times \al)(f) = (\al \times \al)(V_{\ep} \circ f \circ V_{\ep})$.
 \end{proof}\vspace{.5cm}

For any closed surjective relation $f$ on $X$, we have the surjective map of systems $\al : (X,f) \tto (I,f^{\al})$ and so
$\al : (X,f^n) \tto (I,(f^{\al})^n)$ for $n \in \Z, \
 \al : (X,\CC f) \tto (I,\OO f^{\al})$, and $Per(f) \subset Per(f^{\al})$. From this and Proposition \ref{prop2.04} we obtain

 \begin{prop}\label{prop2.05} If $f$ is a surjective closed relation on $X$ and $\al : X \tto I$ is a
 partition then
 \begin{equation}\label{2.03}
 \begin{split}
 f \quad \mbox{chain recurrent} \qquad \Longrightarrow \qquad f^{\al}  \quad \mbox{is recurrent}.\\
 f \quad \mbox{chain transitive} \qquad \Longrightarrow \qquad f^{\al}  \quad \mbox{is transitive}.\\
  f \quad \mbox{chain mixing} \qquad \Longrightarrow \qquad f^{\al}  \quad \mbox{is mixing}. \hspace{.5cm}
  \end{split}
  \end{equation}
  \end{prop}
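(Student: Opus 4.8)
The plan is to push the three chain conditions on $(X,f)$ forward along the surjective system map $\al : (X,f) \tto (I,f^{\al})$, using the single crucial feature of the target: on the \emph{discrete} index set $I$ the three derived relations collapse, $\OO f^{\al} = \NN f^{\al} = \CC f^{\al}$. The functoriality facts recorded earlier give, from $\al : (X,f) \to (I,f^{\al})$, the induced system maps $\al : (X,f^n) \to (I,(f^{\al})^n)$ for every $n$ and $\al : (X,\CC f) \to (I,\CC f^{\al})$, together with the inclusion $\al(|\CC f|) \subset |\CC f^{\al}|$. In each line of the statement the chain hypothesis makes the source relation ``full'', and since $\al$ is surjective on the underlying spaces its image already exhausts $I$ or $I \times I$; only the trivial reverse inclusion is then needed.

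For the recurrent case, chain recurrence of $f$ means $|\CC f| = X$, so $\al(|\CC f|) \subset |\CC f^{\al}|$ combined with surjectivity of $\al$ gives $I = \al(X) = \al(|\CC f|) \subset |\CC f^{\al}| = |\OO f^{\al}|$, i.e. $f^{\al}$ is recurrent. For the transitive case, chain transitivity means $\CC f = X \times X$, and applying $\al \times \al$ through $\al : (X,\CC f) \to (I,\CC f^{\al})$ yields $I \times I = (\al \times \al)(X \times X) = (\al \times \al)(\CC f) \subset \CC f^{\al} = \OO f^{\al}$, i.e. $f^{\al}$ is transitive.

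For the mixing case I would bring in Proposition \ref{prop2.04}. Choose $\ep$ less than the Lebesgue number of $\al$ and set $g = V_{\ep} \circ f \circ V_{\ep}$, which is again a closed surjective relation on $X$ (composition of closed relations is closed, and $V_{\ep}$ is reflexive and symmetric, hence surjective). Proposition \ref{prop2.04} then gives $g^{\al} = f^{\al}$, while chain mixing of $f$ says exactly that $g$ is mixing, i.e. $g^N = X \times X$ for some $N$. Applying the induced power map $\al : (X,g^N) \to (I,(g^{\al})^N)$ and surjectivity of $\al$ gives $I \times I = (\al \times \al)(g^N) = (g^N)^{\al} \subset (g^{\al})^N = (f^{\al})^N \subset I \times I$, so $(f^{\al})^N = I \times I$ and $f^{\al}$ is mixing.

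There is no serious obstacle; the only thing to get right is the bookkeeping distinguishing the three chain notions and their collapse on $I$. The one idea beyond pure functoriality is the appeal to Proposition \ref{prop2.04} in the mixing case: because the mixing hypothesis lives on the $\ep$-thickening $V_{\ep} \circ f \circ V_{\ep}$ rather than on $f$, one needs that, for $\ep$ below the Lebesgue number of $\al$, this thickening represents identically to $f$, which is precisely what lets the argument proceed as in the other two cases.
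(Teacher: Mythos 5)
Your proof is correct and is essentially the paper's own argument: the paper derives the proposition directly from the functoriality remarks (the system map $\al : (X,f) \tto (I,f^{\al})$ inducing maps on powers and on $\CC$, together with the collapse $\OO = \NN = \CC$ on the discrete set $I$) combined with Proposition \ref{prop2.04}, which is exactly the role you give it in the mixing case. The paper leaves these details implicit; you have simply written them out.
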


 $\Box$\vspace{.5cm}
%
%

 For partitions $\al_1 : X_1 \tto I_1$ and $\al_2 : X_2 \tto I_2$, we write $(p,\pi) : \al_1 \tto \al_2$
 when $p : X_1 \tto X_2$ and  $\pi : I_1 \tto I_2$ such that the following diagram commutes:
 \[ \begin{CD}
 X_1 \ @> \al_1 >> \ I_1\\
 @V{p}VV              @VV{\pi}V\\
  X_2 \ @> \al_2 >> \ I_2
  \end{CD}  \]
  i.e. $ \pi \circ \al_1 \ = \al_2 \circ p $.

 \begin{prop}\label{prop2.07}  With $f_1 \in C_s(X_1)$ and $ f_2 \in C_s(X_2)$ assume that $p : (X_1,f_1) \tto (X_2,f_2)$
 is a surjective system map.
 If $\al_1 : X_1 \tto I_1$ and $\al_2 : X_2 \tto I_2$
 are partitions and $(p,\pi) : \al_1 \tto \al_2$.
 then
 \begin{equation}\label{2.06}
 \begin{split}
 (\pi \times \pi)\circ (\al_1 \otimes \al_1 f_1)  =
 (\pi \circ \al_1 \otimes \pi \circ \al_1 f_1)  =  (\al_2 \otimes \al_2 f_2) \circ p, \\
  \mbox{and so} \qquad (\pi \times \pi)(f_1^{\al_1}) \quad = \quad f_1^{\pi \circ \al_1} \quad = \quad f_2^{\al_2}. \hspace{2cm}
\end{split}
  \end{equation}
Furthermore, the following diagram of systems commutes:
  \[ \begin{CD}
(X_1,f_1) \ @> \al_1 >> \ (I_1, f_1^{\al_1})\\
 @V{p}VV              @VV{\pi}V\\
(X_2,f_2) \ @> \al_2 >> \ (I_2, f_1^{\al_2}).
  \end{CD}  \]
  \end{prop}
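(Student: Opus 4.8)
The plan is to prove both assertions by a direct pointwise computation of the two maps on $X_1$ and then pass to images over all of $X_1$, invoking the identity \eqref{2.02a} that $(\al \otimes \al f)(X) = f^{\al}$ together with the surjectivity of $p$. First I would unwind the definition of $\otimes$: for $x \in X_1$ the map $\al_1 \otimes \al_1 f_1$ sends $x$ to $(\al_1(x),(\al_1 f_1)(x)) = (\al_1(x),\al_1(f_1(x)))$, so applying $\pi \times \pi$ coordinatewise gives $((\pi \circ \al_1)(x),(\pi \circ \al_1)(f_1(x)))$. Using the associativity identity $\pi \circ (\al_1 f_1) = (\pi \circ \al_1) f_1$, this is exactly $(\pi \circ \al_1 \otimes \pi \circ \al_1 f_1)(x)$, which gives the first equality in \eqref{2.06}.

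For the second equality I would bring in the two hypotheses not yet used. The square $(p,\pi) : \al_1 \tto \al_2$ is the relation $\pi \circ \al_1 = \al_2 \circ p$, and the fact that $p : (X_1,f_1) \tto (X_2,f_2)$ is a system map between the \emph{maps} $f_1, f_2$ means, by the commuting-square characterization recorded after \eqref{1.01cc}, that $p \circ f_1 = f_2 \circ p$. Substituting $\pi \circ \al_1 = \al_2 \circ p$ into both coordinates of $((\pi \circ \al_1)(x),(\pi \circ \al_1)(f_1(x)))$ and then replacing $p(f_1(x))$ by $f_2(p(x))$ turns this into $(\al_2(p(x)),\al_2(f_2(p(x)))) = (\al_2 \otimes \al_2 f_2)(p(x))$, which is the value of the right-hand side of \eqref{2.06} at $x$.

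Next I would take images over $X_1$. Since $(\al_1 \otimes \al_1 f_1)(X_1) = f_1^{\al_1}$ by \eqref{2.02a}, applying $\pi \times \pi$ and using the first equality gives $(\pi \times \pi)(f_1^{\al_1}) = (\pi \circ \al_1 \otimes \pi \circ \al_1 f_1)(X_1)$; as $\pi \circ \al_1$ is again a partition, being a composite of surjections, a second application of \eqref{2.02a} identifies this image with $f_1^{\pi \circ \al_1}$. The second equality, together with $p(X_1) = X_2$, rewrites the same image as $(\al_2 \otimes \al_2 f_2)(X_2) = f_2^{\al_2}$, yielding the chain $(\pi \times \pi)(f_1^{\al_1}) = f_1^{\pi \circ \al_1} = f_2^{\al_2}$. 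For the diagram of systems, $\al_1$ and $\al_2$ are surjective system maps onto $(I_1,f_1^{\al_1})$ and $(I_2,f_2^{\al_2})$ by the observation preceding \eqref{2.02a}, $p$ is one by hypothesis, and $\pi : (I_1,f_1^{\al_1}) \tto (I_2,f_2^{\al_2})$ is one precisely because $(\pi \times \pi)(f_1^{\al_1}) = f_2^{\al_2}$ was just established; commutativity on underlying spaces is the given $\pi \circ \al_1 = \al_2 \circ p$, which Lemma \ref{lem1.01a}(a) confirms is itself a surjective system map.

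I expect no genuine difficulty here: the content is bookkeeping, and the single point requiring care is the correct use of surjectivity at the right moments. Surjectivity of $p$ is what lets me replace $p(X_1)$ by $X_2$ when naming the image $f_2^{\al_2}$, while surjectivity of $\pi \circ \al_1$ is what licenses the second appeal to \eqref{2.02a} to name the middle image $f_1^{\pi \circ \al_1}$; keeping the order of composition straight in $\pi \circ \al_1 f_1$ versus $(\al_2 \otimes \al_2 f_2) \circ p$ is the only other place an error could creep in.
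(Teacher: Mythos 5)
Your proof is correct and follows essentially the same route as the paper's: both rest on the two identities $\pi \circ \al_1 = \al_2 \circ p$ and $\pi \circ (\al_1 f_1) = \al_2 \circ p \circ f_1 = (\al_2 f_2)\circ p$, combined with \eqref{2.02a} to pass to the relations $f^{\al}$. You merely expand the paper's condensed composition-level argument into a pointwise computation and make the surjectivity bookkeeping explicit, which changes nothing essential.
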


  \begin{proof} Because $(p,\pi)$ maps $\al_1$ to $\al_2$ we have $ \pi \circ \al_1 \ = \al_2 \circ p $
  and
 $ \pi \circ \al_1 f_1  = \al_2 \circ p \circ f_1  = \al_2  f_2 \circ p. $ This and (\ref{2.02a}) imply (\ref{2.06}).
 The commutative diagram of surjective system maps is then clear.
  \end{proof}\vspace{.5cm}

   \begin{prop}\label{prop2.07a} (a) Assume $f_1 \in C_s(X_1)$ and  $\al_1 : X_1 \tto I$ is a partition.
Let $h : X_1 \to X_2$  be a homeomorphism.
If $\ f_2 = h \circ f_1 \circ h^{-1} \in C_s(X_2)$
and $\al_2 = \al_1 \circ h^{-1} : X_2 \tto I$, then
\begin{equation}\label{2.04}
\begin{split}
 \al_2 \otimes \al_2  f_2 \ = \ (\al_1 \otimes \al_1  f_1) \circ h^{-1}. \hspace{2cm}\\
\mbox{and so} \qquad f_2^{\al_2} \ = \ f_1^{\al_1}. \hspace{3cm}
\end{split}
\end{equation}

   (b) Assume  $\al_1 : X_1 \tto I$ and $\al_2 : X_2 \tto I$ are partitions and that $f_1 \in C_s(X_1),
 f_2 \in C_s(X_2)$. If $f_1^{\al_1} = f_2^{\al_2}$ then there exists a homeomorphism $h : X_1 \to X_2$
 such that $\al_2 \circ h = \al_1$ and $h^{-1} f_2 h \sim_{\al_1} f_1$.
 \end{prop}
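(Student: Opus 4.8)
For part (a), the identities are essentially formal consequences of the definitions, so I expect this to be routine. The plan is to unwind the definition of $\al \otimes \al f$. Since $\al_2 = \al_1 \circ h^{-1}$ and $f_2 = h \circ f_1 \circ h^{-1}$, I would compute $\al_2 f_2 = \al_1 \circ h^{-1} \circ h \circ f_1 \circ h^{-1} = \al_1 f_1 \circ h^{-1}$, and likewise $\al_2 = \al_1 \circ h^{-1}$. Pairing these two gives $\al_2 \otimes \al_2 f_2 = (\al_1 \otimes \al_1 f_1) \circ h^{-1}$, which is the first line of (\ref{2.04}). The equality $f_2^{\al_2} = f_1^{\al_1}$ then follows by applying (\ref{2.02a}): the image $(\al_2 \otimes \al_2 f_2)(X_2)$ equals $(\al_1 \otimes \al_1 f_1)(h^{-1}(X_2)) = (\al_1 \otimes \al_1 f_1)(X_1)$ since $h^{-1}$ is onto. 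Alternatively one can just note $f_2^{\al_2} = (\al_2 \times \al_2)(f_2) = (\al_1 h^{-1} \times \al_1 h^{-1})(h \times h)(f_1) = (\al_1 \times \al_1)(f_1) = f_1^{\al_1}$, using $f_2 = (h\times h)(f_1)$ as subsets of the product space.

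Part (b) is the substantive direction, and I expect the construction of $h$ to be the main obstacle. The plan is to use the hypothesis $f_1^{\al_1} = f_2^{\al_2} =: \phi$ to build $h$ piece by piece over the fibers of the common index set $I$. First I would invoke (\ref{2.02a}) to regard $\al_1 \otimes \al_1 f_1 : X_1 \tto \phi$ and $\al_2 \otimes \al_2 f_2 : X_2 \tto \phi$ as partitions of $X_1$ and $X_2$ with the \emph{same} index set $\phi \in \I$. Then Proposition \ref{prop2.02}(b), applied to these two partitions, yields a homeomorphism $h : X_1 \to X_2$ with
$$ (\al_2 \otimes \al_2 f_2) \circ h \ = \ \al_1 \otimes \al_1 f_1. $$
This single equation encodes both desired conclusions, and extracting them is the remaining work.

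The final step is to read off the two coordinates of the pairing. Projecting the displayed equation onto the first factor of $\phi \subset I \times I$ gives $\al_2 \circ h = \al_1$, which is the first assertion. Projecting onto the second factor gives $\al_2 f_2 \circ h = \al_1 f_1$, i.e. $\al_2 \circ f_2 \circ h = \al_1 \circ f_1$. Combining this with $\al_2 = \al_1 \circ h^{-1}$ (equivalently $\al_1 \circ h^{-1} = \al_2$) I would rewrite $\al_1 \circ h^{-1} \circ f_2 \circ h = \al_1 \circ f_1$, which says precisely that $h^{-1} f_2 h$ and $f_1$ agree after composing with $\al_1$, i.e. $h^{-1} f_2 h \sim_{\al_1} f_1$ by the definition of $\sim_{\al_1}$. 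The only point requiring care is checking that the two partitions fed into Proposition \ref{prop2.02}(b) genuinely share the index set $\phi$; this is exactly guaranteed by the hypothesis $f_1^{\al_1} = f_2^{\al_2}$ together with (\ref{2.02a}), so no compatibility issue arises.
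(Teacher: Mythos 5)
Your proposal is correct and follows essentially the same route as the paper: part (b) is exactly the paper's argument (apply Proposition \ref{prop2.02}(b) to the two partitions $\al_1 \otimes \al_1 f_1$ and $\al_2 \otimes \al_2 f_2$ onto the common index set $\phi$, then read off the two coordinates). In part (a) you compute the identities directly, whereas the paper cites Proposition \ref{prop2.07} via the partition map $(h,1_I):\al_1 \tto \al_2$, but this is the same computation packaged differently.
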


 \begin{proof}  (a) We have $h : (X_1,f_1) \tto (X_2,f_2)$ and $(h, 1_I) : \al_1 \tto \al_2$. So (\ref{2.04})
 follows from (\ref{2.06}).

 (b) Let $\phi = f_1^{\al_1} = f_2^{\al_2} \in \I$. $\al_1 \otimes \al_1 f_1 : X_1 \tto \phi$ and
 $\al_2 \otimes \al_2 f_2 : X_2 \tto \phi$ are partitions and so
 by Proposition \ref{prop2.02} (b) there is a homeomorphism $h : X_1 \to X_2$ such that
 $(\al_1 \otimes \al_1 f_1) = (\al_2\otimes \al_2 f_2) \circ h$. That is,
 $\al_1 = \al_2 \circ h$  and $(\al_2  f_2 )\circ h = \al_1 f_1$.
 Hence, $\al_1 \circ (h^{-1}f_2 h) = (\al_2 h)\circ (h^{-1}f_2h) = (\al_2  f_2 )\circ h = \al_1 f_1$ and so
 $h^{-1} f_2 h \sim_{\al_1} f_1$.
  \end{proof}\vspace{.5cm}

The following estimate was proved by Bermudez \& Darji \cite{BD} and  by Shimomura \cite{S3}.

 \begin{prop}\label{prop2.06} Let $f, g \in C_s(X)$. If $g^{\al} \subset f^{\al}$ then
 $d(f,g) \leq mesh \ \A^{\al} + mesh \ f \A^{\al}$ with $f \A^{\al} = \{ f(A) : A \in \A^{\al} \}$.
 \end{prop}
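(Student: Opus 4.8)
The plan is to argue pointwise and bound $d(f(x),g(x))$ uniformly in $x$, since $d(f,g)$ is the sup metric $\sup_{x} d(f(x),g(x))$. The essential idea is to read the hypothesis $g^{\al} \subset f^{\al}$ as an existence statement: whenever $g$ carries a partition element into a given element, so does $f$. Concretely, I will use the reformulation recorded just before Proposition \ref{prop2.04}, namely that $(i,j) \in f^{\al}$ iff $A^i \cap f^{-1}(A^j) \neq \emptyset$, i.e.\ iff $f(A^i) \cap A^j \neq \emptyset$ (and likewise for $g$).

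First I would fix $x \in X$ and set $i = \al(x)$ and $j = \al(g(x))$, so that $x \in A^i$ and $g(x) \in A^j$. Then $g(x) \in g(A^i) \cap A^j$, whence $(i,j) \in g^{\al}$; by the hypothesis $g^{\al} \subset f^{\al}$ this gives $(i,j) \in f^{\al}$, which by the reformulation above means $f(A^i) \cap A^j \neq \emptyset$. So I may select a \emph{witness} point $x' \in A^i$ with $f(x') \in A^j$.

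Next I would estimate $d(f(x),g(x))$ by the triangle inequality through $f(x')$. Since $x, x' \in A^i$, both $f(x)$ and $f(x')$ lie in $f(A^i) \in f\A^{\al}$, so $d(f(x),f(x')) \leq mesh \ f\A^{\al}$. Since $f(x'), g(x) \in A^j \in \A^{\al}$, we get $d(f(x'),g(x)) \leq mesh \ \A^{\al}$. Adding these yields $d(f(x),g(x)) \leq mesh \ \A^{\al} + mesh \ f\A^{\al}$, and taking the supremum over $x \in X$ gives the claimed bound on $d(f,g)$.

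I do not expect a genuine obstacle here: once the witness $x'$ is produced, the remainder is a single triangle inequality and the computation is routine. The only point requiring care is the bookkeeping equivalence $(i,j) \in f^{\al} \Leftrightarrow f(A^i) \cap A^j \neq \emptyset$, and the observation that $x$ and its witness $x'$ share the same partition element $A^i$, which is exactly what controls $d(f(x),f(x'))$ by $mesh \ f\A^{\al}$ rather than by the (larger, irrelevant) diameter of $X$.
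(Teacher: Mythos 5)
Your proposal is correct and follows essentially the same route as the paper: your witness $x'$ is exactly the paper's point $y$ with $(\al(y),\al(f(y))) = (\al(x),\al(g(x)))$, and both arguments conclude with the same triangle inequality through $f(x')$ using the two mesh bounds. No further comment is needed.
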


 \begin{proof} For all $x \in X, \ (\al(x),\al(g(x))) \in g^{\al} \subset f^{\al}$ and so there exists $y \in X$
 such that  $(\al(x),\al(g(x))) = (\al(y),\al(f(y))$.
 Since, $d(f(x),f(y)) \leq mesh \ f A^{\al}$ and $d(f(y),g(x))$ $ \leq mesh \ \A^{\al}$ the result follows from the
 triangle inequality.
 \end{proof} \vspace{.5cm}

 If $f \sim_{\al} g$ for  $f,g \in C_s(X)$ then $\al \otimes \al f = \al \otimes \al g$ and so
 $f^{\al} = g^{\al}$ by (\ref{2.02a}). Hence, with the discrete topology on $\CC(X,\I)$ the map
 $\Gamma_0 : C_s(X) \times \CC(X,\I) \to \I$ by $(f,\al) \mapsto f^{\al}$ is  locally constant. We can regard $\Gamma_0$ as the
 set of triples $\{ (f,\al,\phi) : f^{\al} = \phi \}$ and project away from the second coordinate to define
 \begin{equation}\label{2.05}
 \Gamma \  =_{def} \  \{ (f,\phi) : f^{\al} = \phi \ \ \mbox{for some} \ \al \in \CC(X,\I) \} \ \subset \ C_s(X) \times  \I.
 \end{equation}\index{$\Gamma$}

  We will say that  $\phi$, a surjective relation on $I \in \I$,
  \emph{represents}\index{representing $f$} $f \in C_s(X)$ if there exists a partition $\al$ such that $f^{\al} = \phi$, i.e.
  if $\phi \in \Gamma(f)$. Equivalently, $\phi$ represents $f$ if there exists a system surjection
  $\al : (X,f) \tto (I,\phi)$.


From Proposition \ref{prop2.07} we obtain the following.

\begin{cor}\label{cor2.08} Let $f \in C_s(X)$.
(a) If $\pi : (I,\phi) \tto (I_1,\phi_1)$   and $\phi \in \Gamma(f)$, then $\phi_1 \in \Gamma(f)$.

(b) If $p : (X,f) \tto (X_1,f_1)$ then $\Gamma(f_1) \subset \Gamma(f)$.

(c) If $(\hat X, \hat f)$ is the natural lift of $f$ to a homeomorphism then $\Gamma(f) \ = \ \Gamma(\hat f)$. \end{cor}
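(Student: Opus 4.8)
The plan is to read all three parts directly off Proposition \ref{prop2.07}, since the only genuine work lies in the reverse inclusion of part (c). The common mechanism is that composing a representing partition with a surjection on either the space side or the index side again produces a representing partition, and Proposition \ref{prop2.07} computes the effect on the relation $f^{\al}$ exactly.

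For (a), I would choose $\al : X \tto I$ with $f^{\al} = \phi$ and set $\al_1 = \pi \circ \al$, which is a partition $X \tto I_1$ as a composite of surjections. Applying Proposition \ref{prop2.07} to the identity system map $1_X : (X,f) \tto (X,f)$ with source partition $\al$, target partition $\al_1$, and index map $\pi$, the compatibility hypothesis $(1_X,\pi) : \al \tto \al_1$ is exactly the equation $\pi \circ \al = \al_1$; the conclusion then reads $f^{\al_1} = (\pi \times \pi)(f^{\al}) = (\pi \times \pi)(\phi) = \phi_1$, so $\phi_1 \in \Gamma(f)$. For (b), given $\phi \in \Gamma(f_1)$, I would pick a partition $\bt : X_1 \tto I$ with $f_1^{\bt} = \phi$ and set $\al = \bt \circ p : X \tto I$, again a composite of surjections. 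Applying Proposition \ref{prop2.07} to $p : (X,f) \tto (X_1,f_1)$ with source partition $\al$, target partition $\bt$, and $\pi = 1_I$, the compatibility $(p,1_I) : \al \tto \bt$ is the identity $\bt \circ p = \al$, and the conclusion gives $f^{\al} = f_1^{\bt} = \phi$, whence $\phi \in \Gamma(f)$.

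For (c), the inclusion $\Gamma(f) \subset \Gamma(\hat f)$ is immediate from part (b) applied to the coordinate projection $p_0 : (\hat X, \hat f) \tto (X,f)$, which is a surjective system map. For the reverse inclusion I would take $\phi \in \Gamma(\hat f)$, realized by a partition $\gamma : \hat X \tto I$ with $\hat f^{\gamma} = \phi$. The key observation is that any clopen decomposition of $\hat X \subset X^{\Z}$ depends on only finitely many coordinates, which I may take to lie in $\{-N, \dots, N\}$ for some $N$; since on the natural lift $z_{-N+n} = f^n(z_{-N})$ for every $n \geq 0$, each coordinate in that range is a function of the single coordinate $z_{-N}$. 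Consequently $\gamma$ factors through the projection $p_{-N} : \hat X \tto X$, i.e. there is a partition $\al : X \tto I$ with $\gamma = \al \circ p_{-N}$, well-defined and continuous because $p_{-N}$ is a surjective quotient map and surjective because $\gamma$ is. Applying Proposition \ref{prop2.07} to the surjective system map $p_{-N} : (\hat X, \hat f) \tto (X,f)$ with source partition $\gamma$, target partition $\al$, and $\pi = 1_I$, the compatibility condition is precisely $\gamma = \al \circ p_{-N}$, and the conclusion yields $f^{\al} = \hat f^{\gamma} = \phi$, so $\phi \in \Gamma(f)$.

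I expect the only nontrivial step to be the descent argument in the reverse half of (c); parts (a), (b), and the forward half of (c) are formal consequences of functoriality as packaged in Proposition \ref{prop2.07}. The content of the descent is the combination of the finite-coordinate dependence of clopen sets in $\hat X$ with the defining feature of the lift that each coordinate is determined by any earlier one, so that a single sufficiently early projection $p_{-N}$ absorbs the whole partition. The points needing care are that $N$ can be chosen uniformly over the finitely many cells of $\A^{\gamma}$ and that the induced $\al$ is genuinely a continuous surjection, hence a legitimate indexed partition.
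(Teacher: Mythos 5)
Your proposal is correct and follows essentially the same route as the paper: parts (a) and (b) are read off Proposition \ref{prop2.07} via the partitions $\pi \circ \al$ and $\bt \circ p$ exactly as in the text, and your descent argument in (c) --- finite-coordinate dependence of clopen sets in $\hat X$ plus the fact that on the natural lift every coordinate is determined by any earlier one, so the partition factors through a single projection $p_{-N}$ --- is the paper's argument with the smallest relevant coordinate replaced by $-N$.
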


\begin{proof} (a): If $\al : X \tto I$ is a partition with $f^{\al} = \phi$ and $\al_1 = \pi \circ \al$ then
$(1_X,\pi) : \al \tto \al_1$ and so
Proposition \ref{prop2.07}  implies $f^{\al_1} = \phi_1$.

(b): If $\al_1 : X_1 \tto I$ is a partition with $f_1^{\al_1} = \phi$ and $\al = \al_1 \circ p$, then
$(p,1_I) : \al \tto \al_1$, and so Proposition \ref{prop2.07}  implies $f^{\al} = \phi$.

(c): Since $p_0$ maps $(\hat X, \hat f)$ onto $(X,f)$, it follows from (b) that $\Gamma(f)  \subset  \Gamma(\hat f)$.

Now let $\al : \hat X \tto I$ be a partition and $\phi = \hat f^{\al}$. Because every clopen set is a finite union of basic sets
it follows that there is a finite list of coordinates so that $\al(z)$ depends only on the value of $z$ at each of these coordinates.
Furthermore, if $i$ is the smallest index in this set of coordinates then  $z_n = f^{n-i}(z_i)$ if  $n \geq i$.
This means that $\al(z)$ is a function of $z_i = p_i(z)$. That is, there exists a partition $\bt : X \tto I$ such that
$\al = \bt \circ p_i$.  Because $p_i : (\hat X, \hat f) \tto (X,f)$ and $(p_i,1_I): \al \tto \bt$,
Proposition \ref{prop2.07} implies that
$\phi = \hat f^{\al} = f^{\bt}$.  Hence, $\phi \in \Gamma(f)$.
\end{proof}\vspace{.5cm}

Because it is of interest to know whether a system $(X,f)$ factors over a nontrivial periodic orbit, we observe the following.

\begin{prop}\label{prop2.09} For $f \in C_s(X)$ and a permutation $\gamma$ on $J \in \I$ the following are equivalent.
\begin{itemize}
\item[(i)] $f$ factors over the permutation $\gamma$, i.e. there is a system surjection $\bt : (X,f) \tto (J, \gamma)$.
\item[(ii)] $\gamma \in \Gamma(f)$.
\item[(iii)] There exists $\phi \in \Gamma(f)$ which factors over $\gamma$.
\end{itemize}
\end{prop}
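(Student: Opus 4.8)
The plan is to treat the three implications essentially as bookkeeping with the definitions, the only real input being the functoriality of representations recorded in Corollary~\ref{cor2.08}.

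First I would observe that (i) and (ii) are two phrasings of the same condition. By definition $\gamma \in \Gamma(f)$ means there is a partition $\bt : X \tto J$ with $f^{\bt} = (\bt \times \bt)(f) = \gamma$; since $\gamma$ is surjective this is exactly the statement that $\bt : (X,f) \tto (J,\gamma)$ is a surjective system map, which is condition (i). (This is the equivalent characterization of ``$\phi$ represents $f$'' noted just before Corollary~\ref{cor2.08}, together with the remark before Proposition~\ref{prop2.04} that $\al : (X,f) \tto (I,f^{\al})$ is always a surjective system map.) So (i) $\Leftrightarrow$ (ii) needs nothing beyond unwinding notation.

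Next, for (ii) $\Rightarrow$ (iii), I would simply take $\phi = \gamma$. Since $\gamma \in \Gamma(f)$ by hypothesis, it remains only to note that $\gamma$ factors over itself: the identity $1_J : (J,\gamma) \tto (J,\gamma)$ is a system surjection, so $\gamma$ serves as a relation $\phi \in \Gamma(f)$ which factors over $\gamma$. The one implication with any content is (iii) $\Rightarrow$ (ii), and even this is a direct appeal to the earlier machinery: given $\phi \in \Gamma(f)$, a surjective relation on some $I \in \I$, together with a system surjection $\pi : (I,\phi) \tto (J,\gamma)$ witnessing that $\phi$ factors over $\gamma$, I would apply Corollary~\ref{cor2.08}(a) with $I_1 = J$ and $\phi_1 = \gamma$ to conclude $\gamma \in \Gamma(f)$.

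I do not expect a genuine obstacle: the whole content of the proposition is that representing relations pass to factors, and that has already been isolated as Corollary~\ref{cor2.08}(a). The only point demanding care is in (i) $\Leftrightarrow$ (ii), where one must verify that the witnessing $\bt$ really is an indexed partition (a continuous surjection onto $J \in \I$) and that the relational identity $f^{\bt} = \gamma$ is read as the equality $(\bt \times \bt)(f) = \gamma$ of subsets of $J \times J$, so that it coincides exactly with the surjective-system-map condition defining (i). Note also that the hypothesis that $\gamma$ be a \emph{permutation} is not needed for these three equivalences, which would hold for any surjective relation on $J$; it is imposed here only because the intended application is to factoring over a nontrivial periodic orbit.
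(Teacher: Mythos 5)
Your proposal is correct and follows essentially the same route as the paper: (i)$\Leftrightarrow$(ii) and (ii)$\Rightarrow$(iii) are definitional, and the only substantive implication (iii)$\Rightarrow$(ii) is handled by passing the representation through the factor map. The paper writes this out inline as the composition $\pi \circ \al : (X,f) \tto (J,\gamma)$ of system surjections, whereas you cite Corollary \ref{cor2.08}(a), whose proof is exactly that composition, so the two arguments coincide in content; your closing remark that the permutation hypothesis is not needed for the equivalences is also accurate.
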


\begin{proof} (i) $\Leftrightarrow$ (ii) and (ii) $\Rightarrow$ (iii) are obvious.

If $\al : X \tto I$ with $f^{\al} = \phi$ and $\pi : (I,\phi) \tto (J, \gamma)$ then
$\al : (X,f) \tto (I, \phi)$ and $\pi \circ \al : (X,f) \tto (J, \gamma)$ are surjective system maps.
\end{proof} \vspace{1cm}

\section{Representation Characterizations}\label{characterizations}

In this section we justify our emphasis on the use of surjective relations on finite sets to study $C_s(X)$ and $H(X)$
for $X$ a Cantor space. We will show that a closed, conjugacy invariant subset of $C_s(X)$ is characterized by the
set of relations which represent it, i.e. by $\Gamma(K)$. We  develop the inverse limit construction which will be used
in studying the examples in the next section. Finally, we describe a useful sufficient when the limit of such a
construction has a $G_{\dl}$ conjugacy class and so is a residual element of the closure of its conjugacy class.

We will use homeomorphisms to identify the various Cantor spaces which turn up in our constructions
so that we regard all our partitions as lying in $\CC(X, \I)$.  If $f_1 \in C_s(X_1), f_2 \in C_s(X_2)$ we will say
they are \emph{conjugate} if there exists a homeomorphism $h :X_1 \to X_2$ such that $h\circ f_1 \circ h^{-1}$ is in
the orbit of $f_2$ with respect to the $H(X_2)$ adjoint action on $C_s(X_2)$. This definition is, of course, independent
of the choice of homeomorphism $h$ which is used to identify the two spaces. Equivalently, $f_1$ and $f_2$ are conjugate if there
exists a homeomorphism $h$ such that  $h\circ f_1 \circ h^{-1} \ = \ f_2$.

We first use the sample path space construction to show that every surjective relation $\phi$ on an element $I$ of $\I$ can be
represented by an element of $H(X)$. Usually, the sample path system of the finite system
$(I,\phi)$ will do, but a bit of extra work is needed to assure that the associated space is Cantor.

 \begin{theo}\label{theo3.01} If $\al : X \tto I$ is a partition and $\phi$ is a surjective relation on $I$ then
 there exists  $f \in H(X)$ such that $f^{\al} = \phi$ and $Per(f) = Per(\phi)$.

 If  $\phi$ is recurrent then $f$ can be chosen with dense periodic points.

 If $\phi$ is transitive then
 $f$ can be chosen topologically transitive with dense periodic points.

 If $\phi$ is mixing then
 $f$ can be chosen topologically mixing with dense periodic points.
\end{theo}

 \begin{proof} On $[2] = \{ 0,1 \}$ let $U = [2] \times [2]$ so that $([2]_U,S_U)$ is the full shift with $[2]_U$  the
 Cantor space $[2]^{\Z}$.   Let $\phi_1 = \phi \times U$ on $I \times [2] = I_1$.
 Let $(X_1, f_1) = ((I_1)_{\phi_1}, S_{\phi_1})$, the sample path system for $\phi_1$. Observe that
  $X_1 = I_{\phi} \times [2]_U$ is a Cantor space and that $(f_1)^{p_0} = \phi_1$. Furthermore,
  $Per(f_1) = Per(\phi) \cap Per(U) = Per(\phi)$. See the description
 associated with Proposition \ref{prop1.01}.

  With $\pi : I_1 \tto I$ the first
  coordinate  projection, we define $\al_1 : \pi \circ p_0 : X_1 \tto I$. Since $(1_{X_1},\pi) : p_0 \tto \al_1$,
 (\ref{2.06}) implies $ f_1^{\al_1} =  (\pi \times\pi) ( (f_1)^{p_0} ) \ =  \ (\pi \times\pi) (\phi_1) = \phi$.

  By Proposition \ref{prop2.02} (b) there exists a homeomorphism
  $h : X_1  \to X$ so that $\al \circ h = \al_1$. Let $f = h f_1 h^{-1}$.
  Since $h$ maps $f_1$ to $f$,  (\ref{2.04}) implies that $f^{\al} = \phi$. Clearly $Per(f) = Per(f_1)$.

  The transitivity and mixing results follow from Proposition \ref{prop1.01}.
  \end{proof}\vspace{.5cm}

Next, we show that any element of $ C_s(X)$ can be approximated by a homeomorphism which is represented by $\phi$.

\begin{theo}\label{theo3.02} If  $\bt : X \tto J$  is a partition,  $\phi$ is a surjective relation on $I$,
and $f \in C_s(X)$
then there exists  $g \in H(X)$ such that
\begin{itemize}
\item  $g \ \sim_{\bt } \ f$.

\item There exists a partition $\al : X \tto I$ such that $g^{\al} = \phi$.

\item $Per(f) \cap Per(\phi) \subset Per(g)$.

\item If $f$ is chain recurrent and $\phi$ is recurrent then $g$ can be chosen with dense periodic points.


\item If $f$ is chain mixing and $\phi$ is mixing then $g$ can be chosen topologically mixing and with dense periodic points.
\end{itemize}
\end{theo}

\begin{proof} Apply Proposition \ref{prop2.02}(a) to get a partition $\al_1 : X \tto I \times J$ such that
$\pi_2 \circ \al_1 = \bt $. Define $\al_2 = \pi_1 \circ \al_1 : X \tto I$ and $\phi_1 = \phi \times f^{\bt }$.
By Theorem \ref{theo3.01} there
exists a homeomorphism $g_1$ on $X$ such that $g_1^{\al_1} = \phi_1$. By (\ref{2.06})
\begin{equation}\label{3.01a}
\begin{split}
g_1^{\bt} \ = \ (\pi_2 \times \pi_2)(\phi_1) \ = \ f^{\bt }, \hspace{1cm}\\
g_1^{\al_2} \ = \ (\pi_1 \times \pi_1)(\phi_1) \ = \ \phi, \hspace{1cm}
\end{split}
\end{equation}
 By Proposition \ref{prop2.07a}(b) there exists $h \in H(X)$ such that
$h^{-1} \circ g_1 \circ h \sim_{\bt } f$.  Let $g = h^{-1} \circ g_1 \circ h $ and let  $\al = \al_2 \circ h$.
By (\ref{2.04}), $g^{\al} = g_1^{\al_2} = \phi$.

The homeomorphism $g_1$ can be chosen so that $Per(g_1) = Per(\phi_1) = Per(f^{\bt }) \cap Per(\phi) \supset Per(f) \cap Per(\phi)$.

If $f$ is chain recurrent then $f^{\bt }$ is recurrent.  If, in addition, $\phi$ is recurrent then the product $\phi_1$ is recurrent.

If $f$ is chain mixing then $f^{\bt }$ is mixing.  If, in addition, $\phi$ is mixing then the product $\phi_1$ is mixing.

Hence, by Theorem \ref{theo3.01} we can choose $g_1$ as required and so the conjugate $g$ satisfies the additional properties
as well.
\end{proof}\vspace{.5cm}

\begin{cor}\label{cor3.02a} If $I \in \I$ and $\phi$ a surjective relation on $I$, then $\Gamma^{-1}(\phi)$ is an
open, conjugacy invariant,  dense subset of $C_s(X)$. Furthermore,
\begin{itemize}
\item[(i)] $H(X) \cap \Gamma^{-1}(\phi)$ is dense in $H(X)$ and open relative to $H(X)$.

\item[(ii)] If $\phi$ is mixing then $\{ f \in H(X): f \in \Gamma^{-1}(\phi) \ $
and $f$ is topologically mixing with dense periodic points $ \} $ is dense
in $CM(X)$.

\item[(iii)] If $\phi$ is recurrent then $\{ f \in H(X) : f \in \Gamma^{-1}(\phi) \ $
and $f$ has dense periodic points $ \} $ is dense
in $CR(X)$.

\item[(iv)] $\{ f \in H(X) : f \in \Gamma^{-1}(\phi) \ $ and $Per(\phi) \subset Per(f) \ \}$ is
dense in $\{ f \in C_s(X) : Per(\phi) \subset Per(f) \ \}$.
\end{itemize}
\end{cor}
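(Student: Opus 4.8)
The plan is to read everything off from Theorem \ref{theo3.02}, exploiting that the mesh of a partition controls how closely $\sim_{\bt}$ approximates in the sup metric. Openness and conjugacy invariance are already built into the setup and I would simply recall them. Since $\Gamma_0$ is locally constant, for each fixed $\al \in \CC(X,\I)$ the set $\{ f \in C_s(X) : f^{\al} = \phi \}$ is open, so $\Gamma^{-1}(\phi) = \bigcup_{\al} \{ f : f^{\al} = \phi \}$ is open as a union of open sets. For conjugacy invariance, if $f^{\al} = \phi$ and $g = h \circ f \circ h^{-1}$ with $h \in H(X)$, then Proposition \ref{prop2.07a}(a) gives $g^{\al \circ h^{-1}} = \phi$, so $g \in \Gamma^{-1}(\phi)$.

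For density I would fix $f \in C_s(X)$ and $\ep > 0$, take the clopen decomposition $\{ V_{\ep}(x) \}$ (available since $X$ carries an ultrametric), and index it as a partition $\bt : X \tto J$ of $mesh \ \bt < \ep$. Applying Theorem \ref{theo3.02} to $\bt$, $\phi$, and $f$ produces $g \in H(X)$ with $g \sim_{\bt} f$ and $g^{\al} = \phi$ for some $\al$. Since $g \sim_{\bt} f$ forces $d(f,g) \le mesh \ \bt < \ep$ and $g \in H(X) \cap \Gamma^{-1}(\phi)$, I conclude that $H(X) \cap \Gamma^{-1}(\phi)$ is dense in $C_s(X)$. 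This immediately gives density of $\Gamma^{-1}(\phi)$ in $C_s(X)$ and, because the approximants are homeomorphisms, density of $H(X) \cap \Gamma^{-1}(\phi)$ in $H(X)$ as well; its relative openness in $H(X)$ is inherited from openness in $C_s(X)$. This settles the main assertion together with (i). (If one equips $H(X)$ with the finer topological-group metric $\max(d(f,g), d(f^{-1},g^{-1}))$ rather than the sup metric, density in $H(X)$ still follows, since uniform convergence of homeomorphisms of a compact space to a homeomorphism forces uniform convergence of the inverses.)

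Items (ii)--(iv) are obtained by invoking the corresponding refinements in the conclusion of Theorem \ref{theo3.02} for the same $g$, with $\bt$ chosen of small mesh exactly as above. For (ii) I would approximate an arbitrary $f \in CM(X)$: since $f$ is chain mixing and $\phi$ is mixing, the theorem lets me take $g$ topologically mixing with dense periodic points, so the indicated set is dense in $CM(X)$. For (iii), approximating $f \in CR(X)$ with $\phi$ recurrent, the theorem yields $g$ with dense periodic points. For (iv), whenever $f$ satisfies $Per(\phi) \subset Per(f)$, the inclusion $Per(f) \cap Per(\phi) \subset Per(g)$ furnished by the theorem collapses to $Per(\phi) \subset Per(g)$, and again $g \in H(X) \cap \Gamma^{-1}(\phi)$ is $\ep$-close to $f$.

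There is essentially no hard analytic step: Theorem \ref{theo3.02} has already done the work of simultaneously matching $f$ up to $\bt$ and realizing $\phi$, so the main obstacle is purely organizational. The two points requiring care are ensuring $\bt$ is chosen fine enough that $\sim_{\bt}$ genuinely bounds $d(f,g)$, and correctly matching the dynamical hypothesis on $f$ (chain mixing, chain recurrent, or the periodicity containment) to the ambient set being approximated so that the appropriate bullet of Theorem \ref{theo3.02} is the one that applies.
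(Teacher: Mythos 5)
Your proof is correct and follows the same route as the paper: openness from the locally constant map $\Gamma_0$, conjugacy invariance from (\ref{2.04}) (Proposition \ref{prop2.07a}(a)), and density together with (i)--(iv) by applying Theorem \ref{theo3.02} with a partition $\bt$ of small mesh. The paper states this tersely; you have merely filled in the routine details, all correctly.
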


\begin{proof} $\Gamma^{-1}(\phi)$ is open because the map $\Gamma_0 : C_s(X) \times \CC(X,\I) \to \I$ is locally
constant.  It is conjugacy invariant by (\ref{2.04}).  The rest then follows from Theorem \ref{theo3.02}.
\end{proof}\vspace{.5cm}

We turn now the the inverse limit constructions.

\begin{df}\label{df3.03} We say that an inverse sequence of systems
$\{ \ p_{n+1,n} : (I_{n+1},\phi_{n+1}) \tto (I_n,\phi_n) \ : \ n \in \N \ \}$ with $I_n \in \I$ for all $n$, is
a \emph{Shimomura Sequence}\index{Shimomura Sequence} when it is bifurcating and satisfies the Shimomura Condition. It is an
\emph{invertible Shimomura Sequence}\index{Shimomura Sequence!invertible} when, 
in addition, $\{ \ p_{n+1,n} : (I_{n+1},\phi^{-1}_{n+1}) \tto (I_n,\phi^{-1}_n) \}$
satisfies the Shimomura Condition as well. \end{df}
\vspace{.5cm}

\begin{theo}\label{theo3.04} If $(X,f)$ is the inverse limit of a Shimomura Sequence
$\{ \ p_{n+1,n} : (I_{n+1},\phi_{n+1}) \tto (I_n,\phi_n) \ \}$, then $X$ is a Cantor space with
$f \in C_s(X)$, and if the sequence is an invertible Shimomura Sequence, then $f \in H(X)$.

$\Gamma(f)$ consists of the factors of $\{ \phi_n \}$.  That is, a surjective relation $\phi$ on $I \in \I$ lies
in $\Gamma(f)$ iff there exists $n \in \N$ and $\pi : I_n \tto I$ with $\phi = (\pi \times \pi)(\phi_n)$. \end{theo}

\begin{proof} $X$ is a Cantor space because the sequence bifurcates. The surjective relation $f$ on $X$ is
a map by Proposition \ref{prop1.03} which also says that $f$ is a homeomorphism if the sequence is invertible.

If $p_n : X \tto I_n$ is the projection from the limit then $f^{p_n} = $ \\ $(p_n \times p_n)(f) = \phi_n$.
Hence, $\phi_n \in \Gamma(f)$.

If $\bt : X \tto I$ is a partition, then
because $X$ is the inverse limit, and $I$ is finite, $\bt $ factors through $p_n : X \tto I_n$ for sufficiently large $n$.
That is, there exists $n \in \N$ and $\pi : I_n \tto I$ such that $\bt = \pi \circ p_n$. By ( \ref{2.06})
$f^{\bt } = (\pi \times \pi)(\phi_n)$ and so $f^{\bt}$ is a factor of $\phi_n$.

For the converse, Corollary \ref{cor2.08} (a) implies that any factor of an
element of $\Gamma(f)$ lies in $\Gamma(f)$.
\end{proof}\vspace{.5cm}

\begin{df}\label{df3.05} If $(I,\phi), (I_1,\phi_1)$ are surjective relations on elements of $\I$, we call
$\phi_1$ a \emph{$+$directional lift}\index{lift!$+$directional} of $\phi$ if there exists $\pi : I_1 \tto I$ such that
\begin{itemize}
\item[(i)] $(\pi \times \pi)(\phi_1) \ = \ \phi$.

\item[(ii)] $\pi \circ \phi_1 : I_1 \tto I$ is a map.

\item[(iii)] $\pi^{-1}(i)$ contains more than one element for every $i \in I$.
\end{itemize}
We then say that $\pi$ \emph{induces the lift}\index{lift!induced}  or the lift occurs \emph{via} $\pi$.

It is a \emph{$\pm$directional lift}\index{lift!$\pm$directional}  if $\pi$ satisfies, in addition,
\begin{itemize}
\item[(iv)]   $\pi \circ \phi^{-1}_1 : I_1 \tto I$ is a map,
\end{itemize}
or, equivalently, if, in addition, $\phi_1^{-1}$ a $+$directional lift of $\phi^{-1}$ via $\pi$.
\end{df}
\vspace{.5cm}

Notice that if $\pi: (I_1,\phi_1) \tto (I,\phi)$ induces a $+$ directional lift and $\pi_2 : (I_2,\phi_2) \tto (I_1,\phi_1)$ and
$\pi_3 : (I,\phi) \tto (I_3,\phi_3)$ are surjections with $I_2, I_3 \in \I$, then $\pi_3 \circ \pi \circ \pi_2$ induces
a $+$ directional lift $(I_2,\phi_2)$ of $(I_3,\phi_3)$.

Thus, an inverse sequence $\{ \ p_{n+1,n} : (I_{n+1},\phi_{n+1}) \tto (I_n,\phi_n) \} $
is a Shimomura sequence exactly when $I_n \in \I$ for all $n$ and for every $n$ there exists
$m > n$ such that $p_{m,n}$ induces a $+$directional lift, from $\phi_n$ to $\phi_{m}$.

%

\begin{lem}\label{lem3.06} Let $\al : X \tto I, \al_1 : X \tto I_1$ be partitions and $f \in C_s(X)$.
If $mesh \ \al_1$ is less than the thickness of $\al$ and $\al_1$ refines both $\al$ and $\al f$ then
$f^{\al_1}$ is a $+$ directional lift of $f^{\al}$ induced by the unique surjective map $\pi = \al \circ \al_1^{-1}$
such that $\al = \pi \circ \al_1$.

If $f \in H(X)$ and, in addition, $\al_1$ refines $\al (f^{-1})$ then
$f^{\al_1}$ is a $\pm$ directional lift of $f^{\al}$ induced by $\pi$.
\end{lem}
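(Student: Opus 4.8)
The plan is to verify directly the three conditions (i)--(iii) of Definition \ref{df3.05} for the pair $(f^{\al},f^{\al_1})$ together with the map $\pi$, and then to obtain the $\pm$directional case by applying the already-proved part to $f^{-1}$. Since $\al_1$ refines $\al$, Proposition \ref{prop2.01}(a) supplies the unique surjection $\pi = \al\circ\al_1^{-1}:I_1\tto I$ with $\pi\circ\al_1=\al$, which is the $\pi$ named in the statement. Both $f^{\al}$ and $f^{\al_1}$ are surjective relations because $f$ is, so ``lift'' makes sense. The whole computation will rest on the identity from (\ref{1.01cc}), namely $f^{\al_1}=\al_1\circ f\circ\al_1^{-1}$ as a relation on $I_1$, and likewise $f^{\al}=\al\circ f\circ\al^{-1}$.

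I would check (i) by composing: $(\pi\times\pi)(f^{\al_1})=\pi\circ f^{\al_1}\circ\pi^{-1}=\pi\circ\al_1\circ f\circ\al_1^{-1}\circ\pi^{-1}$, and since $\pi\circ\al_1=\al$ while $\al_1^{-1}\circ\pi^{-1}=(\pi\circ\al_1)^{-1}=\al^{-1}$, this collapses to $\al\circ f\circ\al^{-1}=f^{\al}$; this is just Proposition \ref{prop2.07} in the case $p=1_X$. For (ii), the same identity gives $\pi\circ f^{\al_1}=\pi\circ\al_1\circ f\circ\al_1^{-1}=\al\circ f\circ\al_1^{-1}=(\al f)\circ\al_1^{-1}$. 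By hypothesis $\al_1$ refines $\al f$, so the criterion (iii) of Proposition \ref{prop2.01}(a) tells us that $(\al f)\circ\al_1^{-1}$ is a map; hence $\pi\circ f^{\al_1}$ is a map, which is precisely (ii). For (iii), I would observe that $\pi^{-1}(i)$ is exactly the set of $i_1\in I_1$ with $\al_1^{-1}(i_1)\subset\al^{-1}(i)$, i.e.\ the members of $\A^{\al_1}$ lying inside the element $\al^{-1}(i)$ of $\A^{\al}$. As the mesh of $\al_1$ is less than the thickness of $\al$, the second conclusion of Proposition \ref{prop2.01}(b) applies and each such element contains at least two members of $\A^{\al_1}$, giving $|\pi^{-1}(i)|\geq 2$. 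This establishes the $+$directional lift.

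For the $\pm$directional claim I would simply run the $+$directional result on $f^{-1}$. When $f\in H(X)$ we have $f^{-1}\in H(X)\subset C_s(X)$ and $\al f^{-1}$ is a partition; the hypotheses of the part just proved hold for $f^{-1}$ because the mesh--thickness inequality is unchanged, $\al_1$ still refines $\al$, and, by the additional hypothesis, $\al_1$ refines $\al(f^{-1})$. Hence $(f^{-1})^{\al_1}$ is a $+$directional lift of $(f^{-1})^{\al}$ via $\pi$. Since (\ref{1.01cc}) also yields $(f^{-1})^{\al_1}=\al_1\circ f^{-1}\circ\al_1^{-1}=(f^{\al_1})^{-1}$ and similarly $(f^{-1})^{\al}=(f^{\al})^{-1}$, this says exactly that $(f^{\al_1})^{-1}$ is a $+$directional lift of $(f^{\al})^{-1}$ via $\pi$, which by the equivalence noted at the end of Definition \ref{df3.05} is condition (iv). Thus $f^{\al_1}$ is a $\pm$directional lift of $f^{\al}$.

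The arithmetic is largely unobstructed once the relation-composition identity is in hand; the two points needing care are, in (ii), recognizing that left-composition with $\pi$ collapses $f^{\al_1}$ to $(\al f)\circ\al_1^{-1}$, so that the refinement hypothesis on $\al f$ is precisely what supplies the map condition, and, in (iii), noting that the ``at least two preimages'' conclusion of Proposition \ref{prop2.01}(b) depends only on the refinement $\A^{\al_1}\!\succ\A^{\al}$ that we already possess, together with the thickness inequality, rather than on the Lebesgue-number hypothesis used there solely to produce refinement.
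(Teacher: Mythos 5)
Your proposal is correct and follows essentially the same route as the paper's proof: conditions (i)--(iii) of Definition \ref{df3.05} are verified via the identity $f^{\al_1} = \al_1 \circ f \circ \al_1^{-1}$ from (\ref{1.01cc}) together with Proposition \ref{prop2.01}, and the $\pm$directional case is obtained by applying the $+$directional case to $f^{-1}$. Your closing observation --- that the ``at least two elements'' conclusion of Proposition \ref{prop2.01}(b) requires only the refinement already in hand plus the thickness inequality, not the Lebesgue-number hypothesis --- is a point the paper's proof uses implicitly without comment, and it is good that you made it explicit.
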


\begin{proof} If $\al_1$ refines $\al$, then Proposition \ref{prop2.01}(a) implies that
$\pi = \al \circ \al_1^{-1}$ is the unique surjective map such that
such that $\al = \pi \circ \al_1$ and (\ref{2.06}) implies  $(\pi \times \pi)(f^{\al_1}) = f^{\al}$.

If  $mesh \ \al_1$ is less than the thickness of $\al$, Proposition \ref{prop2.01}(b) implies that
each member of $\A^{\al}$ contains more than one element of $\A^{\al_1}$ and so no $\pi^{-1}(i)$ is a singleton.

$ f^{\al_1}  = (\al_1 \times \al_1)(f) = \al_1 \circ f \circ \al_1^{-1}$ (see (\ref{1.01cc})) and so
$\pi \circ f^{\al_1} \ = \ ( \al  f )\circ \al_1^{-1}$.
By Proposition \ref{prop2.01}(a) again this is a map if $\al_1$ refines $\al  f$.

Thus, from the hypotheses it follows that $\pi$ induces a $+$ directional lift from $f^{\al}$ to $f^{\al_1}$.

If $f \in H(X)$ we apply the result to $f^{-1}$ to obtain a $\pm$ directional lift when, in addition,
$\al_1$ refines $\al ( f^{-1})$.
 \end{proof}\vspace{.5cm}

 We call $\{ \ \al_n : X \tto I_n \ : \ n \in \N \ \}$ a \emph{basic sequence of partitions} when
 \begin{itemize}
 \item $\al_{n+1}$ refines $\al_n$ for all $n \in \N$.
\item $mesh \ \al_n \ \to \ 0$ as $n \ \to \ \infty$.
\end{itemize}
For a basic sequence, let $p_{n+1,n} : I_{n+1} \tto I_n$ be $\al_n \circ \al_{n+1}^{-1}$, the unique
surjection such that $\al_n = p_{n+1,n} \circ a_{n+1}$.

\begin{theo}\label{theo3.07} Let $\{ \ \al_n : X \tto I_n \ \}$ be a basic sequence of partitions. If
$f \in C_s(X)$ then $\{ \ p_{n+1,n} : (I_{n+1},f^{\al_{n+1}}) \tto (I_n,f^{\al_n}) \}$ is
a Shimomura sequence with inverse limit $(X,f)$. If $f \in H(X)$ then it is a invertible Shimomura Sequence.
\end{theo}

\begin{proof} For every $n$, if $m$ is sufficiently large, then
$mesh \ \al_m$ is less than the minimum of the Lebesgue number of
$\al_n$, the Lebesgue number of $\al_n f$ and the thickness of $\al_n$.  By Lemma \ref{lem3.06} $p_{m,n} : I_m \tto I_n$
induces a $+$directional lift from $f^{\al_n}$ to $f^{\al_m}$. It follows that the sequence is a Shimomura Sequence.

The surjections $\al_n : (X,f) \tto (I_n,f^{\al_n})$ induce a surjection from $(X,f)$ onto the inverse limit.
because the $mesh \ \al_n \to 0$, the map is injective and so is an isomorphism from $(X,f)$ to the inverse limit.

If $f \in H(X)$ we can choose $m$ large enough that $\al_m$ refines $\al_n (f^{-1})$ as well showing that the
sequence is a invertible Shimomura Sequence.
\end{proof}\vspace{.5cm}

\begin{cor}\label{cor3.08} If $\{ \ \al_n : X \tto I_n \ \}$ is a basic sequence of partitions, and
$f \in C_s(X)$, then $\Gamma(f)$ consists of the factors of $\{ f^{\al_n} \}$. \end{cor}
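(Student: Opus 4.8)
The plan is to deduce the corollary immediately from the two preceding results, as it is essentially their formal combination. First I would invoke Theorem \ref{theo3.07}: since $\{ \al_n : X \tto I_n \}$ is a basic sequence of partitions and $f \in C_s(X)$, the induced sequence $\{ p_{n+1,n} : (I_{n+1}, f^{\al_{n+1}}) \tto (I_n, f^{\al_n}) \}$, with $p_{n+1,n} = \al_n \circ \al_{n+1}^{-1}$ and projections $\al_n : (X,f) \tto (I_n, f^{\al_n})$, is a Shimomura Sequence whose inverse limit is the system $(X,f)$.

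Next I would apply Theorem \ref{theo3.04} to this Shimomura Sequence, taking $\phi_n = f^{\al_n}$. That theorem asserts that $\Gamma$ of the inverse limit consists exactly of the factors of $\{ \phi_n \}$; that is, a surjective relation $\phi$ on $I \in \I$ lies in $\Gamma(f)$ iff there exist $n \in \N$ and a surjection $\pi : I_n \tto I$ with $\phi = (\pi \times \pi)(f^{\al_n})$. Substituting $\phi_n = f^{\al_n}$ yields precisely the stated conclusion that $\Gamma(f)$ is the set of factors of $\{ f^{\al_n} \}$.

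The only bookkeeping point worth a word is that Theorem \ref{theo3.04} characterizes $\Gamma$ of the abstract inverse limit, whereas we want $\Gamma(f)$ for the given $f$. Theorem \ref{theo3.07} supplies an isomorphism of systems between $(X,f)$ and that inverse limit, and $\Gamma$ is an isomorphism invariant: applying Corollary \ref{cor2.08}(b) to the isomorphism and to its inverse gives inclusions in both directions, hence equality of the two sets $\Gamma$. Thus no genuine obstacle arises; every nontrivial step has already been carried out in Theorems \ref{theo3.07} and \ref{theo3.04}, and the corollary follows without any hard step.
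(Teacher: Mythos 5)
Your proposal is correct and follows exactly the paper's route: the paper's proof is precisely ``immediate from Theorems \ref{theo3.07} and \ref{theo3.04}.'' Your added remark on transporting $\Gamma$ across the isomorphism via Corollary \ref{cor2.08}(b) is a harmless explicit filling-in of what the paper leaves tacit.
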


\begin{proof} This is immediate from Theorems \ref{theo3.07} and \ref{theo3.04}.
\end{proof}\vspace{.5cm}

If $\QQ$ is a collection of surjective relations on elements
of $\I$, we let
\begin{equation}\label{3.02a}
\Gamma^*(\QQ) \ = \ \{ f \in C_s(X) : \Gamma(f) \subset \QQ \}.
\end{equation}\index{$\Gamma^*$}
That is, $f \in \Gamma^*(\QQ)$ when all the relations which represent $f$ are  in $\QQ$.

\begin{df}\label{df3.09} Let $\QQ$ be a nonempty collection of surjective relations on elements
of $\I$.  We say that $\QQ$ \emph{satisfies Condition $\Gamma$}\index{Condition $\Gamma$}\index{$\Gamma$!Condition}  if
\begin{itemize}
\item $\QQ$ is closed under factors. That is, if $\psi$ is a factor of $\phi \in \QQ$ then $\psi \in \QQ$.
\item $\QQ$ admits $+$ directional lifts. That is, if $\phi \in \QQ$ then
there exists $\psi \in \QQ$ which is a $+$ directional lift of $\phi$.
\end{itemize}
\end{df}
\vspace{.5cm}

\begin{prop}\label{prop3.10} If $f \in C_s(X)$ then $\Gamma(f)$ is closed under factors and if
$\phi \in \Gamma(f)$ then there exists $\psi \in \Gamma(f)$ which is a $\pm$ directional lift of $\phi$.
In particular, $\Gamma(f)$ satisfies Condition $\Gamma$.\end{prop}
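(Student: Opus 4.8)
The plan is to treat the two assertions separately and then combine them for the final clause about Condition $\Gamma$. Closure of $\Gamma(f)$ under factors is immediate from the work already done: if $\psi$ is a factor of some $\phi \in \Gamma(f)$, say via $\pi : (I,\phi) \tto (I_1,\psi)$, then Corollary \ref{cor2.08}(a) gives $\psi \in \Gamma(f)$. So the real content is the existence of a $\pm$ directional lift of each $\phi \in \Gamma(f)$ that again lies in $\Gamma(f)$.

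The obstacle is that the $\pm$ directional statement of Lemma \ref{lem3.06} is available only for a homeomorphism, whereas $f$ is merely a surjective continuous map. I would circumvent this by passing to the natural lift $(\hat X,\hat f)$, which is a homeomorphism, and invoking Corollary \ref{cor2.08}(c), which gives $\Gamma(f) = \Gamma(\hat f)$. Thus it suffices to produce, for each $\phi \in \Gamma(\hat f)$, a $\pm$ directional lift $\psi \in \Gamma(\hat f)$.

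So fix $\phi \in \Gamma(\hat f)$ and choose a partition $\al : \hat X \tto I$ with $\hat f^{\al} = \phi$. I would then select a single fine refinement $\al_1$ that simultaneously refines the three partitions $\al$, $\al\hat f$ and $\al(\hat f^{-1})$ while having mesh below the thickness of $\al$. Concretely, take $\al_1$ to be an indexing of the ball decomposition $\{ V_{\ep}(x) : x \in \hat X \}$ for $\ep$ smaller than the minimum of the Lebesgue numbers of $\al$, $\al\hat f$, $\al(\hat f^{-1})$ and the thickness of $\al$; Proposition \ref{prop2.01}(b) then guarantees each required refinement and the mesh-below-thickness inequality. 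Since $\hat f \in H(\hat X)$, the homeomorphism case of Lemma \ref{lem3.06} applies and shows that $\psi = \hat f^{\al_1}$ is a $\pm$ directional lift of $\hat f^{\al} = \phi$, induced by $\pi = \al \circ \al_1^{-1}$; and $\psi \in \Gamma(\hat f) = \Gamma(f)$ by construction, since $\al_1$ is a partition of $\hat X$ representing $\hat f$.

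Finally, Condition $\Gamma$ follows at once: closure under factors is the first assertion, and a $\pm$ directional lift is a fortiori a $+$ directional lift, since condition (iv) of Definition \ref{df3.05} is an extra constraint layered on top of (i)--(iii). Hence the lift just produced already witnesses that $\Gamma(f)$ admits $+$ directional lifts. The only genuine difficulty in the argument is the homeomorphism hypothesis in Lemma \ref{lem3.06}; this is dissolved by the identity $\Gamma(f) = \Gamma(\hat f)$, and everything else reduces to a routine mesh-and-Lebesgue-number selection on the Cantor space $\hat X$.
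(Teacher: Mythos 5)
Your proof is correct and follows essentially the same route as the paper's: closure under factors via Corollary \ref{cor2.08}(a), and the $\pm$ directional lift obtained by passing to the natural lift $\hat f$ via Corollary \ref{cor2.08}(c) and then exploiting Lemma \ref{lem3.06} for the homeomorphism $\hat f$. The only cosmetic difference is that the paper routes the refinement step through a basic sequence of partitions and Theorem \ref{theo3.07}, whereas you apply Lemma \ref{lem3.06} directly to a single sufficiently fine ball-decomposition refinement --- the same mechanism, inlined.
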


\begin{proof} $\Gamma(f)$ is closed under factors by Corollary \ref{cor2.08} (a).

If $\phi \in \Gamma(f)$ then there exists a partition $\al : X \tto I$ such that $\phi = f^{\al}$.
Construct a basic sequence of partitions $\{ \al_n \}$ with $\al_1 = \al$. By Theorem \ref{theo3.07}
 $\{ (I_n,f^{\al_n}) \}$ is a Shimomura sequence and it is invertible if $f \in H(X)$.  Hence, for sufficiently large
 $n$, the map $p_{n,1} : I_n \to I$ induces a $+$ directional lift $f^{\al_n}$ of $f^{\al_1} \ = \ \phi$ and if
 $f \in H(X)$ then $n$ large enough implies that the lift is $\pm$ directional.

 For general $f \in C_s(X)$, we apply the $\pm$ result to the natural lift $\hat f \in H(\hat X)$. By Corollary \ref{cor2.08} (c)
 $\Gamma(f) = \Gamma(\hat f)$ and so every $\phi \in \Gamma(f)$ admits a $\pm$ directional lift in $\Gamma(f)$.
 \end{proof}\vspace{.5cm}

 Now we obtain our main characterization result.

\begin{theo}\label{theo3.11} Let $K_0 \subset K \subset C_s(X)$ be nonempty. Let $\QQ$ be
a collection of surjective relations on elements
of $\I$.
\begin{itemize}
\item[(a)] $\Gamma(\Gamma^*(\QQ)) \subset \QQ, \ K \subset \Gamma^*(\Gamma(K))$ and
$$\Gamma^*(\Gamma(\Gamma^*(\QQ))) \ = \ \Gamma^*(\QQ), \ \Gamma(K) \ = \ \Gamma(\Gamma^*(\Gamma(K))).\hspace{1cm}$$

\item[(b)] $\Gamma^*(\QQ) \subset C_s(X)$ is closed and conjugacy invariant.

\item[(c)] $\Gamma(K)$ satisfies condition $\Gamma$.

\item[(d)] If $K_0$ is dense in $K$, then $\Gamma(K_0) = \Gamma(K)$.

\item[(e)] If $K_0$ and $K$ are conjugacy invariant and $\Gamma(K_0) = \Gamma(K)$ then
$K_0$ is dense in $K$.

\item[(f)] If $K$ is closed in $C_s(X)$ and is conjugacy invariant then $K = \Gamma^*(\Gamma(K))$.
If $K$ is closed in $H(X)$ and is conjugacy invariant then $K = H(X) \cap \Gamma^*(\Gamma(K))$.

\item[(g)] If $\QQ$ satisfies condition $\Gamma$ then
$$\QQ \ = \ \Gamma (\Gamma^*(\QQ)) \ = \  \Gamma (H(X) \cap \Gamma^*(\QQ)).$$
\end{itemize}
\end{theo}

\begin{proof} (a) The two inclusions are obvious and then imply the equations by monotonicity of $\Gamma$ and $\Gamma^*$.

(b) If $\al : X \tto I$ is a partition and $\{ f_n \}$ is a sequence in $C_s(x)$  converging
to $f$ then $ \al  f_n$ is eventually constant at $\al  f$ and so eventually $f_n^{\al}$
is eventually constant at $f^{\al}$. If $f_n \in \Gamma^*(\mathcal{Q})$ for all $n$ then
$f^{\al} \in \mathcal{Q}$. As $\al$ was arbitrary $f \in \Gamma^*(\mathcal{Q})$. $\Gamma^*(\mathcal{Q})$
is conjugacy invariant by (\ref{2.04}).

(c) Clearly, $\Gamma(K) = \bigcup_{f \in K} \ \Gamma(f)$ and  Condition $\Gamma$ is preserved by arbitrary unions. So
the result follows from Proposition \ref{prop3.10}.

(d) Clearly, $\Gamma(K_0) \subset \Gamma(K)$.  If $\phi \in  \Gamma(K)$ then $f^{\al} = (\al \otimes \al f)(X)$
for some $f \in K$ and $\al : X \tto I$. Because $K_0$ is dense in $K$ there exists $f_0 \in K_0$ such that
 $\al f_0 = \al f$.  Hence, $f_0^{\al} = f^{\al} = \phi$ and so $\phi \in \Gamma(K_0)$.

 (e) If $f \in K$ and $\al : X \tto I$ is a partition then $f^{\al} \in \Gamma(K) = \Gamma(K_0)$ and so there
 exists $\bt : X \tto I$ and $f_0 \in K_0$ such that $f_0^{\bt } = f^{\al}$. By Proposition \ref{prop2.07a} (b) there exists
 a homeomorphism $h \in H(X)$ such that $f \sim_{\al} h^{-1}f_0 h$. Since $K_0$ is conjugacy invariant $h^{-1}f_0 h \in K_0$.
 Since $\al$ was arbitrary, $K_0$ is dense in $K$.

 (f) $K \subset \Gamma^*(\Gamma(K)) = K_1$.  By (b) $K_1$ is closed and conjugacy invariant. By (a) $\Gamma(K) = \Gamma(K_1)$.
 By (d), $K$ is dense in $K_1$.  Since $K$ is closed, it equals $K_1$.

 Now assume that $K \subset H(X)$ is closed in the relative topology.  So if $\ol{K}$ is the closure in $C_s(X)$ then
 $K$ is dense in $\ol{K}$ and $K = \ol{K} \cap H(X)$. Since $\ol{K}$ is closed and conjugacy invariant it follows
 that $\ol{K} = \Gamma^*(\Gamma(\ol{K}))$ which equals $\Gamma^*(\Gamma(K))$ by (c). Hence, $K = \ol{K} \cap H(X) =
 \Gamma^*(\Gamma(K)) \cap H(X)$.

 (g) Let $\phi \in \QQ$.  By Condition $\Gamma$ we can inductively build a Shimomura sequence
 $\{ \ p_{n+1,n} : (I_{n+1},\phi_{n+1}) \tto (I_n,\phi_n) \ \}$ with $\phi_1 = \phi$ and $\phi_n \in \QQ$ for all
 $n \in \N$.  Let $(X,f)$ be the inverse limit of the sequence.  By Theorem \ref{theo3.04} $X$ is a Cantor set with
 $f \in C_s(X)$ and every element of $\Gamma(f)$ is a factor of some $\phi_n$.  Because $\QQ$ is closed under factors,
 we have $\Gamma(f) \subset \QQ$, i. e. $ f \in \Gamma^*(\QQ)$. If $(\hat X, \hat f)$ is the natural lift of $f$ to a
 homeomorphism then by Corollary \ref{cor2.08}(c) $\Gamma(\hat f) = \Gamma(f)$ and so $\hat f \in \Gamma^*(\QQ)$.
 After identification of $\hat X$ with $X$ via a homeomorphism, we have that $\hat f \in H(X) \cap \Gamma^*(\QQ)$ with
 $\phi \in \Gamma(\hat f)$.

 The reverse inclusions follow from (a) and monotonicity of $\Gamma(\cdot)$.
 \end{proof}\vspace{.5cm}

 Thus, $K \mapsto \Gamma(K)$ and $\QQ \mapsto \Gamma^*(\QQ)$ are inverse bijections between the collection of closed,
 conjugacy invariant subsets of $C_s(X)$ and the collection of those sets of surjective relations on elements of $\I$
 which satisfy condition $\Gamma$.

 \begin{cor}\label{cor3.12} If $K \subset C_s(X)$ is closed and conjugacy invariant then
 $H(X) \cap K$ is a dense $G_{\dl}$ subset of $K$. \end{cor}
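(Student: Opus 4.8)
The plan is to treat the two assertions separately: that $H(X) \cap K$ is $G_{\dl}$ in $K$, and that it is dense in $K$. The first is immediate from the earlier topological observations, while the second is a formal consequence of the characterization Theorem \ref{theo3.11}. (We may assume $K$ nonempty, the empty case being vacuous.)

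For the $G_{\dl}$ claim I would invoke Proposition \ref{prop1.05}(b), which says $H(X)$ is a $G_{\dl}$ subset of $C(X)$ and hence of the closed subspace $C_s(X)$. Intersecting a $G_{\dl}$ subset of $C_s(X)$ with $K$ yields a $G_{\dl}$ subset of $K$ in its relative topology, so $H(X) \cap K$ is $G_{\dl}$ in $K$ with no further work.

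For density, set $\QQ = \Gamma(K)$. Since $K$ is closed and conjugacy invariant, Theorem \ref{theo3.11}(f) gives $\Gamma^*(\QQ) = \Gamma^*(\Gamma(K)) = K$, and Theorem \ref{theo3.11}(c) tells us that $\QQ$ satisfies Condition $\Gamma$. I would then feed this into Theorem \ref{theo3.11}(g), which applies precisely because $\QQ$ satisfies Condition $\Gamma$, to obtain
$$\Gamma(K) \ = \ \QQ \ = \ \Gamma(H(X) \cap \Gamma^*(\QQ)) \ = \ \Gamma(H(X) \cap K).$$
Thus the homeomorphisms in $K$ already represent every relation that any element of $K$ represents.

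Finally, I would apply Theorem \ref{theo3.11}(e) to the pair $K_0 = H(X) \cap K$ and $K$. Both are conjugacy invariant (as intersections of conjugacy invariant sets, $H(X)$ being conjugacy invariant), and the previous step gives $\Gamma(K_0) = \Gamma(K)$; hence $K_0$ is dense in $K$, completing the argument. The point to note is that all the genuine analytic content---building, for each $\phi \in \Gamma(K)$, an actual homeomorphism of $K$ realizing $\phi$ via the inverse-limit Shimomura construction---is already packaged inside part (g); the only care required here is to verify that $\Gamma^*(\QQ)$ is exactly $K$, so that the set $H(X) \cap \Gamma^*(\QQ)$ appearing in (g) coincides with the desired $H(X) \cap K$.
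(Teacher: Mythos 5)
Your proof is correct and follows essentially the same route as the paper: establish that $\QQ = \Gamma(K)$ satisfies Condition $\Gamma$ via Theorem \ref{theo3.11}(c), identify $K = \Gamma^*(\QQ)$ via (f), conclude $\Gamma(H(X)\cap K) = \Gamma(K)$ from (g), deduce density from (e), and get the $G_{\dl}$ property from $H(X)$ being $G_{\dl}$ in $C_s(X)$. The only differences are cosmetic: you make explicit the nonemptiness caveat and the identification $H(X)\cap\Gamma^*(\QQ) = H(X)\cap K$, which the paper leaves implicit.
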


 \begin{proof} $\QQ = \Gamma(K)$ satisfies Condition $\Gamma$ by (c) and by (f) $K = \Gamma^*(\QQ)$. Hence, (g) implies
 that $\Gamma(K) = \QQ = \Gamma(H(X) \cap K)$.  So by (e) $H(X) \cap K$ is dense in $K$. Since $H(X)$ is a $G_{\dl}$
 subset of $C_s(X)$, its intersection with $K$ is a $G_{\dl}$ subset of $K$.
 \end{proof}\vspace{.5cm}

\begin{theo}\label{theo3.13} If $K,K_0$ are conjugacy invariant subsets of $C_s(X)$ and $K$ is closed, then
$K_0 \subset K$ iff $\Gamma(K_0) \subset \Gamma(K)$.  In particular, for $f \in C_s(X)$ the
closure of the conjugacy class of $f$ is
$\{ g \in C_s(X) : \Gamma(g) \subset \Gamma(f) \}$.
\end{theo}

\begin{proof} Clearly, $K_0 \subset K$ implies $\Gamma(K_0) \subset \Gamma(K)$. By Theorem \ref{theo3.11} (f)
$K = \Gamma^*(\Gamma(K))$ and by (d) and (f) the closure $\overline{K_0} = \Gamma^*(\Gamma(K_0))$. Hence,
$\Gamma(K_0) \subset \Gamma(K)$ implies $\overline{K_0} \subset K$.

If $K$ is the closure of the conjugacy class of $f$ and $K_0$ is the conjugacy class of $g$ then $g \in K$ iff
$K_0 \subset K$ and so iff $\Gamma(g) = \Gamma(K_0) \subset \Gamma(K) = \Gamma(f)$.
\end{proof}\vspace{.5cm}

From Corollary \ref{cor2.08} we  obtain the following result from \cite{S1}.

\begin{cor}\label{cor3.14}(a) If $f,f_1 \in C_s(X)$ and  $(X,f_1)$ is a factor of $(X,f)$
then $f_1$ is in the closure of the
conjugacy class of $f$.

(b) If $f \in C_s(X)$ and $(\hat X, \hat f)$ is the natural lift of $f$ to a homeomorphism then, (after identifying
$\hat X$ with $ X$ ) the closure of the conjugacy classes of $f$ and of $\hat f$ agree. \end{cor}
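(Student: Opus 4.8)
The plan is to combine the functoriality of the representation sets $\Gamma(\cdot)$ established in Corollary \ref{cor2.08} with the characterization of conjugacy-class closures given in Theorem \ref{theo3.13}. Recall that the ``in particular'' clause of Theorem \ref{theo3.13} identifies the closure of the conjugacy class of any $f \in C_s(X)$ with the set $\{ g \in C_s(X) : \Gamma(g) \subset \Gamma(f) \}$. So in each part it will suffice to verify the appropriate inclusion or equality of representation sets and then read off the conclusion about closures.

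For part (a), I would begin by unpacking the hypothesis: ``$(X,f_1)$ is a factor of $(X,f)$'' means, by definition, that there is a surjective system map $p : (X,f) \tto (X,f_1)$. Corollary \ref{cor2.08}(b) then gives the inclusion $\Gamma(f_1) \subset \Gamma(f)$ immediately. Applying the characterization of Theorem \ref{theo3.13} with $g = f_1$, this inclusion places $f_1$ in the set $\{ g : \Gamma(g) \subset \Gamma(f) \}$, which is exactly the closure of the conjugacy class of $f$, completing (a).

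For part (b), I would first identify $\hat X$ with $X$ by a homeomorphism, so that $\hat f$ becomes an element of $H(X) \subset C_s(X)$ and both conjugacy classes live under the same $H(X)$ action; here I should note that $\Gamma$ is unaffected by such an identification, since it is conjugacy invariant by (\ref{2.04}). Corollary \ref{cor2.08}(c) then supplies the equality $\Gamma(f) = \Gamma(\hat f)$. Invoking Theorem \ref{theo3.13} once for each map, the closure of the conjugacy class of $f$ is $\{ g : \Gamma(g) \subset \Gamma(f) \}$ while that of $\hat f$ is $\{ g : \Gamma(g) \subset \Gamma(\hat f) \}$; since the two defining sets coincide, so do the two closures.

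The argument is essentially bookkeeping once Theorem \ref{theo3.13} is available, so I expect no serious obstacle. The only point deserving a moment's care is the identification of $\hat X$ with $X$ in part (b): one must confirm that transporting $\hat f$ along a homeomorphism leaves its representation set unchanged, and this is precisely the conjugacy invariance of $\Gamma$ recorded in (\ref{2.04}).
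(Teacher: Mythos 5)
Your proposal is correct and is exactly the argument the paper intends: the corollary is stated immediately after Theorem \ref{theo3.13} with no written proof precisely because it follows by combining Corollary \ref{cor2.08}(b),(c) with the characterization of conjugacy-class closures in Theorem \ref{theo3.13}, which is what you do. The remark about conjugacy invariance of $\Gamma$ under the identification of $\hat X$ with $X$ is a reasonable extra precaution but raises no issue beyond what the paper takes for granted.
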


$\Box$ \vspace{1cm}

Discussed by most of the authors whose work I am describing is the important question of when the conjugacy class of a
map $f$ is a $G_{\dl}$ in $C_s(X)$.  In that case, the class is a residual subset of its closure. Here we will describe
sufficient conditions which are convenient to apply. They come from Shimomura \cite{S3} and from \cite{AGW}.

We will say that a Shimomura Sequence  $\{ \ p_{n+1,n} : (I_{n+1},\phi_{n+1}) \tto (I_n,\phi_n) \ \}$  is
\emph{pointed}\index{Shimomura Sequence!pointed} \index{pointed Shimomura Sequence} 
when $I_1$ is a singleton and so $(I_1,\phi_1)$ is the trivial system. Any Shimomura sequence can be adjusted
to become pointed by inserting the trivial system at level $1$ and shifting the other index numbers up by one. We will call this
the \emph{pointed extension} of the original sequence.

\begin{df}\label{df3.15} Let  $\{ \ p_{n+1,n} : (I_{n+1},\phi_{n+1}) \tto (I_n,\phi_n) \ \}$ be a pointed Shimomura sequence.
If $f \in C_s(X)$ then $f$ satisfies \emph{the lifting property} with respect to the sequence if
for every $\ep > 0, n \in \N$ and $\al : (X,f) \tto (I_n,\phi_n)$ there exists $m > n$ and $\bt : (X,f) \tto (I_m,\phi_m)$
such that $\al = p_{m,n} \circ \bt $ and with $mesh \ \bt < \ep$.\end{df}
  \vspace{.5cm}

 \begin{theo}\label{theo3.16} Let $\{ \ p_{n+1,n} : (I_{n+1},\phi_{n+1}) \tto (I_n,\phi_n) \ \}$ be a pointed Shimomura
 sequence. The set of $f \in C_s(X)$ which satisfy the lifting property with respect to the sequence is a $G_{\dl}$ subset
 of $C_s(X)$. If the set is nonempty then it is exactly the set of $f \in C_s(X)$ which are conjugate to the inverse
 limit of $\{ \ p_{n+1,n} \times p_{n+1,n} : \phi_{n+1} \tto \phi_n \ \}$.\end{theo}

 \begin{proof} Given $\ep > 0$ and $\al : X \tto I_n$ we let $G(\al,\ep)$ be the set of $f \in C_s(X)$ such that
 $f^{\al} = \phi_n$ and there exists $\bt : X \tto I_m$ with $mesh \  \bt < \ep$ and such that $f^{\bt } = \phi_m$ and
 $p_{m,n} \circ \bt = \al$. $G(\al,\ep) \subset C_s(X)$ is open, because the map $\Gamma_0$ is locally constant. That is, for any
 $\al, \bt $ the relations  $f^{\al}, f^{\bt }$ are unaltered as $f$ varies in a small enough open neighborhood.
 The intersection over the countable
 set of $\al$'s and rational $\ep > 0$ is the $G_{\dl}$ set of maps with the lifting property.

 If $f$ satisfies the lifting property then using (\ref{2.04}) it is easy to check that any conjugate of $f$ satisfies the
 lifting property.

 Now assume that $f$ satisfies the lifting property. Since $(I_1,\phi_1)$ is trivial, there is a unique
 $\al_1 : (X,f) \tto (I_1,\phi_1)$. Let $n_1 = 1$.  Inductively, we use the lifting property to
 define an increasing sequence $\{ n_i \}$
 and $\al_i : (X,f) \tto (I_{n_i},\phi_{n_i})$ with $mesh \  \al_i < 1/i$ for $i > 1$ and
 with $\al_i = p_{n_{i+1},n_i} \circ \al_{i+1}$.

 Now define $q_n : (X,f) \tto (I_n,\phi_n)$ for all $n \in \N$ by $q_n = \al_{i}$ if $n = n_i$, and
 by $q_n = p_{n_{i+1},n} \circ \al_{i+1}$ if $n_i \leq n < n_{i+1}$. These define a map from $(X,f)$ onto
 the inverse limit $(I_{\infty},\phi_{\infty})$. Since the $mesh \  \al_i  \ \to \ 0$ as $i \ \to \infty$ it follows that
 the map from $X$ to $I_{\infty}$ is injective and so is a homeomorphism.  This shows that $f$ is conjugate to the
 inverse limit map.
 \end{proof}\vspace{.5cm}

 We show that for homeomorphisms, the lifting property is preserved by discrete suspension. This requires a little construction.

 \begin{lem}\label{lem3.16a} Assume  $f \in H(X)$, $R$ is a closed relation on $A$ and $N$ is a positive integer.
 If $p : (X_N,f_N) \to (A_N,R_N)$ is a continuous map between the $N$-fold suspensions, then there exist a
 homeomorphism $h : (X_N,f_N) \to (X_N,f_N)$ and a continuous map $\tilde p : (X,f) \to (A,R)$ such that
 $p \circ h = (\tilde p)_N$. \end{lem}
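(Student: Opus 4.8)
The plan is to unwind the $N$-fold suspension definition explicitly. Recall from~(\ref{1.02}) that $X_N = X \times \{1,\dots,N\}$ and that $f_N$ advances the second coordinate by $1$ until it reaches $N$, at which point it applies $f$ and resets the second coordinate; similarly $A_N = A \times \{1,\dots,N\}$ with $R_N$ built from $R$. A continuous map $p : (X_N,f_N) \to (A_N,R_N)$ decomposes into its two coordinate functions. The first thing I would examine is how $p$ interacts with the ``clock'' coordinate $i \in \{1,\dots,N\}$. Because $f_N^N$ acts as $f$ on each slice $X \times \{i\}$ (fixing $i$), and $R_N^N$ by~(\ref{1.03}) preserves the clock coordinate, continuity plus the system-map condition should force $p$ to map each clock-slice $X \times \{i\}$ into a union of slices of $A_N$, and in fact the clock coordinate of $p$ should be determined up to a cyclic shift. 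The clopen/ultrametric structure on the finite factor makes the second-coordinate component of $p$ locally constant, hence constant on each connected piece, so on each $X \times \{i\}$ the image lands in a single $A \times \{\sigma(i)\}$ for some function $\sigma$.

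\textbf{Straightening the clock.} The role of the homeomorphism $h : (X_N,f_N) \to (X_N,f_N)$ is precisely to correct this clock discrepancy so that after composing, the map respects the clock slices in the canonical way, i.e. sends $X \times \{i\}$ to $A \times \{i\}$. I would build $h$ so that $p \circ h$ sends slice $i$ into slice $i$; the map $\sigma$ coming from $p$ must be compatible with the dynamics, and since $f_N$ cyclically permutes the slices, $\sigma$ must be (the restriction of) a cyclic shift, which I can undo by letting $h$ be a suspension automorphism that cyclically relabels slices together with an appropriate power of $f$ on the base. This is where Lemma~\ref{lem1.01a} and the functoriality of suspension noted before Lemma~\ref{lem1.04} will be used: an equivariant relabeling of clock slices is itself a suspension of a homeomorphism, hence of the form needed.

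\textbf{Extracting $\tilde p$.} Once $p \circ h$ respects the slices, write its restriction to slice $i=1$ as a map $\tilde p : X \to A$; then I claim $p \circ h = (\tilde p)_N = \tilde p \times 1_{\{1,\dots,N\}}$. The consistency across slices is forced by equivariance: advancing the clock by one step on the $X_N$ side and on the $A_N$ side must agree under $p \circ h$, and this pins down the slice-$i$ component to be $\tilde p$ for each $i$ (here one uses that $f_N$ moves slice $i$ to slice $i+1$ by the identity on the base for $i<N$, so the base component cannot change as the clock advances within a period). Finally I would verify $\tilde p : (X,f) \to (A,R)$ is a system map, which follows by applying the $i=N \to 1$ transition in $f_N$, where $f_N$ acts by $f$ on the base and $R_N$ by $R$: the system-map inequality $(\tilde p \times \tilde p)(f) \subset R$ drops out of $(p\circ h \times p \circ h)(f_N) \subset R_N$ restricted to that transition.

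\textbf{Main obstacle.} The delicate point will be verifying that the clock-permutation $\sigma$ induced by $p$ is genuinely a cyclic shift and that the correcting $h$ can be chosen as a \emph{homeomorphism of $(X_N,f_N)$} (i.e. an automorphism of the suspension system), rather than merely a set bijection; this requires checking that the slice-relabeling is equivariant for $f_N$, which in turn forces the base correction to be an integer power of $f$ so that $h$ commutes appropriately. Once the clock is straightened equivariantly, the remaining identification $p \circ h = (\tilde p)_N$ is routine bookkeeping using the explicit form of $f_N$ and $R_N$.
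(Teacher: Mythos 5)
There is a genuine gap at the first step, and it is exactly the point on which the whole lemma turns. You claim that since the clock component $n(x,i)$ of $p$ is locally constant, it is ``constant on each connected piece, so on each $X \times \{i\}$ the image lands in a single $A \times \{\sigma(i)\}$.'' But $X$ is a Cantor space: it is totally disconnected, so its connected pieces are points and this argument yields nothing. The clock component of $p$ is locally constant in $x$, but it need not be constant on a slice, and in general it is not. For example, take $A$ a singleton with $R = 1_A$, $N = 2$, and $X = X' \cup X''$ a decomposition into nonempty disjoint clopen $f$-invariant pieces; then $p(x,k) = (a,k)$ for $x \in X'$ and $p(x,k) = (a, k+1 \bmod 2)$ for $x \in X''$ is a continuous system map from $(X_2,f_2)$ to $(A_2,R_2)$ whose image of the slice $X \times \{1\}$ meets both slices of $A_2$. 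Consequently there is no single clock permutation $\sigma$, and no $h$ of the form you propose (a global cyclic relabeling of slices composed with a single power of $f$ on the base) can straighten $p$; your construction of $h$ breaks down here.

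The repair, which is the paper's proof, is to make the correction piecewise. Since $p$ maps $f_N$ to $R_N$, it maps $f_N^N$ to $R_N^N$, and by (\ref{1.03}) the latter fixes the clock coordinate; hence $n(f(x),k) = n(x,k)$, i.e. the discrepancy function $x \mapsto n(x,1)$ is $f$-invariant as well as locally constant. Therefore $X_j = \{ x \in X : n(x,1) = j \}$ is a clopen $f$-invariant subset of $X$, and one defines $h$ to be $(f_N)^{1-j}$ on the clopen $f_N$-invariant set $X_j \times [1,N]$. This $h$ is a homeomorphism of $(X_N,f_N)$ because each piece is invariant and powers of $f_N$ commute with $f_N$; after composing, $n(h(x,1)) = 1$ for all $x$, and from there your ``extracting $\tilde p$'' step and the verification that $\tilde p : (X,f) \to (A,R)$ go through essentially as you describe. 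So your outline of the final bookkeeping is sound, but the missing idea --- decomposing $X$ into the clopen invariant sets $X_j$ and correcting by a \emph{different} power of $f_N$ on each --- is the actual content of the lemma.
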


 \begin{proof} Let $p(x,k) = (p_1(x,k),n(x,k))$ for $(x,k) \in X \times [1,N]$. Since $p$ maps $f_N$ to $R_N$, it
 maps $f_N^N$ to $R_N^N$.  By (\ref{1.03}) this says that $p(f(x),k) = (p_1(f(x),k),n(f(x),k)) $ has
 $n(f(x),k) = n(x,k)$ and $(p_1(x,k),p_1(f(x),k)) \in R$.

 Since $n : X_N \to [1,N]$ is continuous, it follows that $X_j = \{ x \in X : n(x,1) = j \}$ is a clopen, $f$ invariant subset of
 $X$ for $j = 1,\dots,N$.  Define $h$ to equal $(f_N)^{1-j}$ on the clopen invariant $ X_j \times [1,N] \subset X_N$. Since
 $X_N$ is thus decomposed into invariant pieces, $h$ maps $(X_N,f_N)$ to itself. Since $n(f_N(x,k)) = n(x,k) + 1 \ mod \ N$,
 we have that $n(h(x,1)) = 1$ for all $x$.  Thus, if we let $\hat p(x) = p_1(h(x,1))$ we see that $p(h(x,1)) = (\tilde p(x),1)$
 and so for $k = 1, \dots,N, \ p(h(x,k)) = (\tilde p(x),k)$.  That is, $p \circ h = (\tilde p)_N$.

 If $x \in X_j$ then $h(x,1) = (x,1)$ if $j = 1$ and $\tilde p(f(x)) = p_1(f(x),1)$. Otherwise $h(x,1) = (f^{-1}(x),N-j+2)$,
 $\tilde p(x),1) = p_1(f^{-1}(x),N-j+2)$
 $\tilde p(f(x)) = p_1(h((f_N)^N(x,1)) = p_1((f_N)^N(h(x,1)) = p_1(x,N-j+2)$.
 In either case, we have $(\tilde p(x), \tilde p(f(x))) \in  R$.  That is, $\tilde p : (X,f) \to (A,R)$.
 \end{proof}\vspace{.5cm}

 \begin{theo}\label{theo3.16b}  If $f \in H(X)$ satisfies
 the lifting property for the pointed Shimomura sequence $\{ \ p_{n+1,n} : (I_{n+1},\phi_{n+1}) \tto (I_n,\phi_n) \ \}$
 and $N$ is a positive integer then the $N$-fold suspension $f_N$ on $X_N$ satisfies the lifting property
 for the pointed extension of  $\{ \ (p_{n+1,n})_N : ((I_{n+1})_N,(\phi_{n+1})_N) \tto ((I_n)_N,(\phi_n)_N) \ \}$.
 \end{theo}

 \begin{proof} If $\al : (X,f) \tto (I_n,\phi_n)$ then $(\al)_N : (X_N,f_N) \tto ((I_n)_N,(\phi_n)_N)$ is a
 lift of the map to the trivial system. That is, we can lift the bottom level of the pointed extension of the suspension.

 Given $\al : (X_N,f_N) \tto ((I_n)_N,(\phi_n)_N)$ and $\ep > 0$, apply Lemma \ref{lem3.16a} to get $h : (X_N,f_N) \to (X_N,f_N)$
 a homeomorphism and $\gamma : (X,f) \tto (I_n,\phi_n)$ so that $\al \circ h = (\gamma)_N$. Let $\dl > 0$ be an
 $\ep$ modulus of uniform continuity for $h$ and choose  $\bt : (X,f) \tto (I_{n+k},\phi_{n+k})$ so that
 $p_{n+k,n} \circ \bt = \gamma$ and the mesh of $\bt $ is less than $\dl$. Then the mesh of $(\bt )_N$ is less than
 $\dl$ and so $\tilde \bt = (\b)_N \circ h^{-1}$ has mesh less than $\ep$.   Furthermore,
 $$(p_{n+k,n})_N \circ \tilde \bt \ = \ (\gamma)_N \circ h^{-1} \ = \ \al,$$
 as required.
 \end{proof}\vspace{.5cm}

 We describe an alternative to the lifting property which is a bit easier to use.

 \begin{df}\label{df3.17}    If $\{ \ p_{n+1,n} : (I_{n+1},\phi_{n+1}) \tto (I_n,\phi_n) \ \}$ is a pointed Shimomura sequence,
 then we says that the
 the sequence has the \emph{factoring  property}\index{factoring  property} if whenever $q_1 : (I_k,\phi_k) \tto (I_n,\phi_n)$ with $k > n$, there
 exists  $m > k$ and $q_2 : (I_m,\phi_m) \tto (I_k,\phi_k)$  such that $q_1 \circ q_2 = p_{m,n}$.
 \end{df}
\vspace{.5cm}

\begin{theo}\label{theo3.18} Let $\{ \ p_{n+1,n} : (I_{n+1},\phi_{n+1}) \tto (I_n,\phi_n) \ \}$ be a pointed Shimomura
sequence with limit $(X,f)$.
 The sequence has the factoring property iff $f$ has the lifting property with respect to the sequence.
\end{theo}

\begin{proof}  Write $\al_n = p_n: (X,f) \tto (I_n,\phi_n)$, the projection from the inverse limit.

  Assume $f$ has the lifting property and we are given $q_1 : (I_k,\phi_k) \tto (I_n,\phi_n)$.

Let $\ep > 0$ be
smaller than the Lebesgue number of $\al_k$. By the lifting property applied to $q_1 \circ \al_k$,
there exists $q : (X,f) \tto (I_m,\phi_m)$
such that $p_{m,n} \circ q = q_1 \circ \al_k$ and with $mesh \  q < \ep$. Because $\ep$ is smaller than the
Lebesgue number of $\al_k$, $q_2 = \al_k \circ q^{-1} : I_m \tto I_k$ is a map with $q_1 \circ q_2 = p_{m,n}$.
We can always replace $q_2$ by $q_2 \circ p_{m', m}$ with $m' > m$ arbitrarily large and so we can get $m > k$ if it
was not already.  This is the factoring property.

Now assume the factoring property and suppose we are given $\ep > 0, n \in \N$ and $\al : (X,f) \tto (I_n,\phi_n)$.

Because $(X,f)$ is the inverse limit, $\al$ factors through $\al_k : X \tto I_k$ for $k$ sufficiently large.  This is because
any clopen set is a finite union of basic sets and $\A^{\al}$ consists of finitely many clopen sets.
We choose $k$ large enough that $k > n$ and $mesh \  \al_k < \ep$. Thus, there exists $q_1 : (I_k,\phi_k) \tto (I_n,\phi_n)$
so that $\al = q_1 \circ \al_k$. By the factoring property, there exist $m > k$ and $q_{2} : (I_m,\phi_m) \tto (I_k,\phi_k)$
so that $q_1 \circ q_{2} = p_{m,n}$.

Now use the factoring property again to get $q_{3} : (I_r,\phi_r) \tto (I_m,\phi_m)$ so that
$q_{2} \circ q_{3} = p_{r,k}$. Let $\bt = q_3 \circ \al_r : (X,f) \tto (I_m,\phi_m)$.  Since, $q_2 \circ \bt =
p_{r,k} \circ \al_r = \al_k$ it
follows that $mesh \  \bt \ \leq \ mesh \  \al_k \ < \ \ep$. Furthermore,
$$p_{m,n} \circ \bt \ = \ q_1 \circ q_2 \circ q_3 \circ a_r \ = \ q_1 \circ p_{r,k} \circ a_r \ = \ q_1 \circ \al_k \ = \ \al.$$
This proves the lifting property.
\end{proof}\vspace{1cm}

We will say that $f \in C_s(X)$ is \emph{of residual type} when its conjugacy class is a $G_{\dl}$ subset of $C_s(X)$ and
so is a residual subset in its closure.

\begin{theo}\label{theo3.19} If $f \in C_s(X)$ is of residual type then $f \in H(X)$. \end{theo}

\begin{proof} Let $K$ be the closure in $C_s(X)$ of the conjugacy class of $f$.  Hence, $K$ is a
closed, conjugacy invariant subset of $C_s(X)$.  By Corollary \ref{cor3.12} $H(X) \cap K$ is a dense, $G_{\dl}$
subset of $K$. By definition, the conjugacy class of $f$ is dense in $K$. If the conjugacy class is a $G_{\dl}$
then these two residual subsets intersect.  That is, there exists $g \in H(X)$ which is conjugate to $f$. Since $H(X)$
is itself conjugacy invariant, $f \in H(X)$.
\end{proof}\vspace{.5cm}

We will use the lifting property and the factoring property to construct examples of residual type. However, I do
not know whether the property is necessary, i.e. whether any $f$ of residual type admits a Shimomura sequence with respect to
which it has the lifting property.  So, for example, I do not know if the discrete suspensions of a homeomorphism of
residual type are necessarily of residual type. Nonetheless, Theorem \ref{theo3.16b} will suffice for construction purposes.
\vspace{1cm}

\section{Examples}\label{examples}
\vspace{.5cm}

Following Bernardes and Darji \cite{BD}, we define for  integers
$N,  M \geq 1, L \geq 0 $, an $N$\emph{loop}, or a loop of length $N$,  to be a relation isomorphic
to $\phi_N$ on $[1,N] = \{ 1,..., N \} \in \I$  given by $\{ (i,i+1) : i = 1,...,N-1 \} \cup \{ (N,1) \}$,
i.e.  translation by $1$ on the the group $\Z/N \Z$.
An $N-L-M$ \emph{dumbbell}\index{dumbbell} is a relation isomorphic to $\phi_{N,L,M} $ on
$[1,N+L+M-1] = \{ 1,..., N + L + M - 1 \} \in \I$  given by
$\{ (i,i+1) : i = 1,...,N + L + M-2 \} \cup \{ (N,1), (N + L + M - 1, N + L) \}$.
Notice that a $1$ loop and a $1-0-1$ dumbbell are both the trivial surjective relation on a singleton.

For the $N-L-M$ dumbbell, $\phi_{N,L,M}$ restricts to a loop on $[1,N]$. It is called the \emph{in-loop}\index{in-loop} of the dumbbell.
The restriction to $[N+L,N+L+M-1]$ is a loop called the \emph{out-loop}\index{out-loop} of the dumbbell. We call the restriction to
$[N,N+L]$ the \emph{connecting path}\index{connecting path} of the dumbbell. We extend the language via the
-unique- isomorphism to any $N-L-M$ dumbbell.

If $L \geq 1$ then the inloop points precede the outloop points with respect to the partial order given by $\OO \phi_{N,L,M}$.
On the other hand, if $L = 0$, then the connecting path is trivial and we call the $N-0-M$ dumbell an $N-M$  \emph{wedge}\index{wedge}. In that
case, $\phi_{N,0,M}$ is isomorphic to $\phi_{M,0,N}$ by the map which sends $i$ to $M+i$ for $i = 1,.., N-1$ and to
$i - N + 1$ for $i = N ,..., N+M-1$.

If $L \geq 1$ and $K$ is a positive integer, then the $K$-fold suspension
$(\phi_{N,L,M})_K$ is isomorphic to $\phi_{NK,(L - 1)K + 1,MK}$ by
$(i,j) \mapsto K(i - 1) + j$.  If $L = 0, K > 1$, then $ (\phi_{N,0,M})_K$ is not a dumbell as the two loops share a common path
of length $K$

\begin{prop}\label{prop4.01} Every surjective relation $\phi$ on a finite set $I$ is a factor of a
finite disjoint union of dumbbells.

Every transitive $\phi$ on $I$
is a factor of a single loop.

Every recurrent $\phi$ on $I$ is a factor of a finite disjoint union of loops which may taken to be of the same length.
\end{prop}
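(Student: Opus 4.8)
The plan is to exploit the elementary description of factors of loops and dumbbells as coverings by walks. If $\pi : (J,\psi) \tto (I,\phi)$ is a factor map and $\psi$ is a single loop $\phi_K$, then $(\pi\times\pi)(\psi)$ is exactly the set of edges traversed by the closed walk $\pi(1),\pi(2),\dots,\pi(K),\pi(1)$ obtained by running once around the loop; for a disjoint union of loops the image is the union of the edge sets of finitely many closed walks in $\phi$. Likewise the image of a single dumbbell $\phi_{N,L,M}$ is the edge set of a walk made of a closed walk (the image of the in-loop), then a walk (the image of the connecting path), then a closed walk (the image of the out-loop). So in every case the problem reduces to covering each edge of $\phi$ by a walk of the appropriate shape lying inside $\phi$, with the walks meeting every vertex; surjectivity of $\pi$ is then automatic, since in a surjective $\phi$ every vertex carries an edge. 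The one structural fact I will use repeatedly is that, because $\phi$ is surjective, following out-edges from any vertex must eventually repeat a vertex, so every vertex reaches a cycle, and dually every vertex is reached from a cycle. (All the index sets produced below can be relabelled to lie in $\I$, which is closed under disjoint unions and the operations used.)

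For the transitive case I would produce a single loop directly. List the edges $e_1,\dots,e_r$ of $\phi$ as $e_s=(a_s,b_s)$. Since $\phi$ is transitive, $\OO\phi=I\times I$, so for each $s$ there is a path $P_s$ in $\phi$ from $b_s$ to $a_{s+1}$ (indices mod $r$). Splicing gives a single closed walk $e_1 P_1 e_2 P_2\cdots e_r P_r$ whose edge set contains every $e_s$ and lies in $\phi$, hence equals $\phi$. Reading this walk off as $\pi(1),\dots,\pi(K)$ for a surjection $\pi:\Z/K\Z\tto I$ exhibits $\phi=(\pi\times\pi)(\phi_K)$ as the image of one loop.

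For an arbitrary surjective $\phi$ I would build one dumbbell per edge. Given $e=(a,b)\in\phi$, use the structural fact to choose a cycle $C_{\mathrm{in}}$ together with a path from a vertex of $C_{\mathrm{in}}$ to $a$, and a cycle $C_{\mathrm{out}}$ together with a path from $b$ to a vertex of $C_{\mathrm{out}}$. Then $C_{\mathrm{in}}$, the concatenated path $\cdots\to a\to b\to\cdots$, and $C_{\mathrm{out}}$ are realised by an abstract (possibly degenerate: a wedge or a loop) dumbbell $\phi_{N,L,M}$ via a generally non-injective map onto this walk, sending loop-edges to loop-edges and path-edges to edges of $\phi$. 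Taking the disjoint union of these dumbbells over all $e\in\phi$ and amalgamating the maps gives a factor map onto $\phi$: its image lies in $\phi$ and contains every edge, so it is all of $\phi$.

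For the recurrent case I would pass to the basic sets. Since $\phi$ is recurrent, $|\OO\phi|=I$, and the basic sets $B_1,\dots,B_r$ of $\OO\phi$ partition $I$; on each $B_k$ the restriction $\phi|B_k$ is transitive, so by the transitive case it is the image of a loop $\phi_{K_k}$. The loops can be forced to share a common length $L=\mathrm{lcm}(K_1,\dots,K_r)$ by running each walk around $L/K_k$ times: if $\pi:\Z/K_k\Z\tto B_k$ realises $\phi|B_k$, then $\pi'(j)=\pi(j\bmod K_k)$ defines a factor map of $\phi_L$ onto $B_k$ with the same image, since each residue occurs with the same successor. The disjoint union of these equal-length loops then maps onto $\bigcup_k \phi|B_k$. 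The step I expect to be the main obstacle is precisely the verification that this recovers all of $\phi$: the loop coverings only ever traverse edges lying on cycles, so one must confirm that every edge of $\phi$ lies on a cycle, equivalently within a single basic set. This is the point where the recurrence hypothesis must be invoked beyond mere surjectivity, and where the basic-set decomposition has to be reconciled with the presence of edges joining distinct basic sets.
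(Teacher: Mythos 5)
Your handling of the first two claims is correct and is essentially the paper's own argument: the paper likewise realizes a transitive $\phi$ as the edge set of one closed walk through every edge, and for general surjective $\phi$ it likewise extends each edge forward and backward until a vertex repeats, producing one dumbbell per edge mapping into $\phi$; your explicit splicing and cycle-plus-path bookkeeping only fills in details the paper leaves to the reader.

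The obstacle you flag in the recurrent case is not something more cleverness will remove: with the paper's definition of recurrent ($|\OO \phi| = I$), the third claim is false. Take $I = \{1,2\}$ and $\phi = \{(1,1),(2,2),(1,2)\}$. This $\phi$ is surjective and recurrent (both vertices carry self-loops), yet the edge $(1,2)$ lies on no closed walk, because $\OO\phi(2) = \{2\}$. By exactly the observation you make --- every edge in the image of a disjoint union of loops lies on a closed walk, namely the image of the cycle through any preimage edge --- this $\phi$ is not a factor of any disjoint union of loops. The paper's own proof jumps over this point: the sentence ``if $I$ is recurrent and so is the union of basic sets then $\phi$ is the factor of a finite number of loops'' tacitly assumes that $\phi$ equals the union of its restrictions to the basic sets, i.e.\ that no edge joins two distinct basic sets, and that does not follow from $|\OO\phi| = I$. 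The claim becomes true, and your argument closes, under the stronger hypothesis $\phi \subset \OO\phi \cap (\OO\phi)^{-1}$ (every edge lies inside a basic set): then any path starting in a basic set $B_k$ stays in $B_k$, so each restriction $\phi|B_k$ is a surjective transitive relation, your transitive case applies, and your lcm re-wrapping equalizes the lengths. That stronger hypothesis is also the one actually available in the paper's applications: if $f$ is chain recurrent, a uniform-continuity argument shows that $x$ and $f(x)$ always lie in the same chain component, so $f \subset \CC f \cap (\CC f)^{-1}$, and hence every edge of $f^{\al}$ lies inside a basic set of $f^{\al}$. Thus $\Gamma(CR(X))$ is still generated by disjoint unions of loops as used in Theorem \ref{theo4.09}, even though Proposition \ref{prop4.01} as stated --- and, by the same example, Proposition \ref{prop1.01} and the ``recurrent'' clauses of Theorems \ref{theo3.01} and \ref{theo3.02} --- needs this reformulation of the recurrence hypothesis.
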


\begin{proof} We can regard a surjective relation $\phi \subset I \times I$ as describing a directed graph
with vertices $I$ and with edges given by the pairs in $\phi$.

If $\phi$ is transitive, i.e. $I$ is a single basic set
then we can choose a path along the graph which begins and ends at the same vertex and which passes through every edge.
This expresses $\phi$ as a factor of a loop.  If $I$ is recurrent and so is the union of basic sets then $\phi$ is the
factor of a finite number of loops. Taking the least common multiple $N$ of the lengths, we can lift each loop to an
$N$ loop and so express $\phi$ as a factor of a disjoint union of $N$ loops.

We can include any edge  for a surjective relation in a path and extend it forward and backward until on each side a
repeat of a vertex occurs. This exhibits a dumbbell which maps into $\phi$ and which hits the given edge.  As there
are only finitely many edges we can express $\phi$ as a factor of a finite union of dumbbells.
\end{proof}\vspace{.5cm}

\begin{lem}\label{lem4.02} If $f \in H(X)$ and $\phi_N$ is the $N$-loop on $[1,N]$ then the following are equivalent:
\begin{itemize}
\item[(i)]  There exists $p : (X,f) \to ([1,N],\phi_N)$.
\item[(ii)] There exists a partition $\al : X \tto [1,N]$ such that $f^{\al} = \phi_N$.
\item[(iii)] $\phi_N \in \Gamma(f)$.
\item[(iv)]  $(X,f)$ is an $N$-fold suspension of some system $(Y,g)$.
\end{itemize}
If $p : (X,f) \to ([1,N],\phi_N)$ exists, it is necessarily a system surjection. \end{lem}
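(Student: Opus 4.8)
The plan is to prove the chain of implications (iii) $\Leftrightarrow$ (ii) $\Rightarrow$ (i) $\Rightarrow$ (ii) and then (ii) $\Rightarrow$ (iv) $\Rightarrow$ (ii), with the concluding sentence falling out of the (i) $\Rightarrow$ (ii) step. First, (ii) and (iii) are literally the same assertion, since $\phi_N \in \Gamma(f)$ means by definition that some partition $\al$ satisfies $f^{\al} = \phi_N$. The implication (ii) $\Rightarrow$ (i) is immediate: a partition $\al$ with $f^{\al} = (\al\times\al)(f) = \phi_N$ is in particular a continuous map with $(\al\times\al)(f) \subset \phi_N$, hence a system map $p = \al$.

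The substantive step is (i) $\Rightarrow$ (ii), which simultaneously yields the concluding claim. Given $p : (X,f) \to ([1,N],\phi_N)$, I would first observe that since $\phi_N$ is itself a map---the cyclic permutation sending $i$ to $i+1$ for $i<N$ and $N$ to $1$---and $f$ is a map, the inclusion $(p\times p)(f) \subset \phi_N$ forces $p \circ f = \phi_N \circ p$. Because $f$ is surjective, $p(X) = p(f(X)) = \phi_N(p(X))$, so $p(X)$ is a nonempty $\phi_N$-invariant subset of $[1,N]$; as $\phi_N$ is a transitive cyclic permutation, $p(X) = [1,N]$, i.e. $p$ is a surjection between the underlying spaces and hence a partition. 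Now $\phi_N$ is a bijection, so $\phi_N(a)$ is a singleton for every $a$; Lemma \ref{lem1.01a}(b) then upgrades $p$ to a system surjection, giving $(p\times p)(f) = \phi_N$, that is $f^{p} = \phi_N$. This establishes (ii) together with the final sentence.

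For (ii) $\Rightarrow$ (iv), set $X_i = \al^{-1}(i)$, a clopen decomposition of $X$. The relation $\al \circ f = \phi_N \circ \al$ gives $f(X_i) \subset X_{\phi_N(i)}$; since $f$ is a bijection whose images $\{f(X_i)\}$ partition $X$ while $\phi_N$ is a bijection, equality $f(X_i) = X_{\phi_N(i)}$ holds termwise, and each $f|_{X_i} : X_i \to X_{\phi_N(i)}$ is a homeomorphism. I would then take $Y = X_1$ and $g = f^N|_{X_1}$, a homeomorphism of $Y$, and define $\Psi : Y \times \{1,\dots,N\} \to X$ by $\Psi(y,i) = f^{i-1}(y)$. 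On each clopen slab $Y \times \{i\}$ the map $\Psi$ is the homeomorphism $f^{i-1}$ onto $X_i$, so $\Psi$ is a homeomorphism; a direct check against the definition (\ref{1.02}) of $g_N$ shows $\Psi \circ g_N = f \circ \Psi$, the wrap-around from slab $N$ back to slab $1$ being exactly where $g = f^N$ enters. Thus $(X,f)$ is the $N$-fold suspension $(Y_N,g_N)$.

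Finally (iv) $\Rightarrow$ (ii): if $\Psi : (Y_N,g_N) \to (X,f)$ is a conjugating homeomorphism, then the coordinate projection $\pi : Y_N \tto [1,N]$ is a partition with $(g_N)^{\pi} = \phi_N$, read off directly from (\ref{1.02}). Applying Proposition \ref{prop2.07a}(a) with the conjugacy $\Psi$ transports this to $\al = \pi \circ \Psi^{-1}$ on $X$, giving $f^{\al} = \phi_N$. I expect the main obstacle to be the bookkeeping in (ii) $\Rightarrow$ (iv): matching the slab indices to the suspension convention of (\ref{1.02})---in particular confirming that the edge from layer $N$ to layer $1$ realizes the return map $g = f^N|_{X_1}$, and that the resulting $\Psi$ genuinely intertwines $f$ with $g_N$ across all slabs rather than merely within a single period.
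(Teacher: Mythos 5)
Your proposal is correct and follows essentially the same route as the paper: the same orbit/invariance argument shows $p(X)=[1,N]$, Lemma \ref{lem1.01a}(b) upgrades $p$ to a system surjection (hence a partition representing $\phi_N$), and the suspension equivalence uses the identical construction $Y=\al^{-1}(1)$, $g=f^N|_Y$, $(y,i)\mapsto f^{i-1}(y)$, with the converse read off from the second-coordinate projection. The only cosmetic difference is that the paper cites Proposition \ref{prop2.09} to tie (i), (ii), (iii) together, while you observe the equivalences directly.
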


\begin{proof} If $p : (X,f) \to ([1,N],\phi_N)$ then $p(f^k(x))$ is congruent to $p(x) + k \  mod \ N$. Thus, $p(X) = [1,N]$
and so $p : (X,f) \tto ([1,N],\phi_N)$ since $\phi_N$ is a map (see Lemma \ref{lem1.01a}(b).
Since such a system map $p$ is necessarily a surjection and $\phi_N$ is a
permutation, the equivalence of (i), (ii) and (iii) follows from Proposition \ref{prop2.09}.

(iv) $\Rightarrow$ (i): If $(X,f) = (Y_N,g_N)$ and so $X = Y \times [1,N$ then the second coordinate projection from $X$ to $[1,N]$
maps $X$ onto $[1,N]$ and $f$ onto $\phi_N$.

(i) $\Rightarrow$ (iv): If $p : (X,f) \to ([1,N],\phi_N)$ then we let $Y = p^{-1}(1)$
which is a clopen $g = f^N$ invariant subset of $X$. Define the homeomorphism $h : Y_N \to X$ by
$h(x,i) = f^{i-1}(x)$ for $i = 1,...,N$. Clearly, for $i < N$, $f(h(x,i)) = f^i(x) = h(x,i+1) =h(g_N(x,i))$, while
$f(h(x,N)) = f^N(x) = h(f^N(x),1)) = h(g_N(x,N))$.
\end{proof}\vspace{.5cm}

We will repeatedly use the rigidity of maps between loops and dumbbells as described in the following Lemma.

\begin{lem}\label{lem4.03} (a) There exists a map $p : ([1,M],\phi_M) \to ([1,K],\phi_K)$ iff $K$ is a divisor of $M$.
In that case, given $i \in [1,M]$ and $j \in [1,K]$ there is a unique such $p$, necessarily surjective, with
$p(i) = j$.

(b) There exists a map $p : ([1,N + L + M - 1],\phi_{N,L,M}) \to ([1,K],\phi_K)$ iff $K$ is a divisor of $N$ and of $M$.
In that case, given $i \in [1,N + L + M - 1]$ and $j \in [1,K]$ there is a unique such $p$, necessarily surjective, with
$p(i) = j$.

(c) If $p : ([1,N + L + M - 1],\phi_{N,L,M}) \to ([1,N_1 + L_1 + M_1],\phi_{N_1,L_1,M_1})$ and $p$ is not surjective,
 then either $p([1,N + L + M - 1])$ is the in-loop or the out-loop of $\phi_{N_1,L_1,M_1}$.

(d) Assume $L_1 \geq 1$. There exists a map
$p : ([1,N + L + M - 1],\phi_{N,L,M}) \tto ([1,N_1 + L_1 + M_1],\phi_{N_1,L_1,M_1})$ iff
$N_1$ is a divisor of $N$, $M_1$ is a divisor of $M$ and $L_1 \leq L$. If $N \leq i \leq N + L$ and
$N_1 \leq j \leq N_1 + L_1$ then such a $p$ exists with $p(i) = j$ iff $i - N \geq j - N_1$ and $N + L - i \geq N_1 + L_1 - j$.
In that case, $p$ is unique.

 \end{lem}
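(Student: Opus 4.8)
For parts (a) and (b) I would exploit that the target $\phi_K$ is a permutation. The defining condition $(p\times p)(\phi)\subset\phi_K$ then forces, along every edge $(x,x')$ of the domain, that $p(x')=p(x)+1\pmod K$, and since $\phi_K$ is a map this inclusion is automatically an equality. Walking along the spanning path $1\to 2\to\cdots$ shows $p$ is completely determined by one value $p(i)=j$ via $p(x)\equiv j+(x-i)\pmod K$, which gives uniqueness and shows any prescribed value occurs at most once. The divisibility conditions then read off from the closing edges: for (a) the edge $(M,1)$ gives $M\equiv0\pmod K$, and for (b) the edges $(N,1)$ and $(N+L+M-1,N+L)$ give $N\equiv0$ and $M\equiv0\pmod K$. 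Conversely, when the divisibility holds the formula is consistent across the closing edges and defines a genuine system map; it is surjective because $K\le M$ (respectively $K\le N,\,K\le M$), so the walk sweeps every residue mod $K$.

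For (c) the step I would isolate first is a small lemma: the image of a loop under a system map into a dumbbell is one full loop of the target, the in-loop or the out-loop. A loop maps to a closed walk in the target graph; since the connecting path is a one-way transient corridor with no directed route back into the in-loop, a closed walk cannot leave and re-enter a loop, so it is confined to the in-loop or out-loop and projects onto it surjectively exactly as in (a). Applying this to both the in-loop and out-loop of the domain, I would argue by cases. The configuration in-loop$\mapsto$out-loop together with out-loop$\mapsto$in-loop is impossible, as it would force a directed walk from the out-loop back to the in-loop. If both images are the in-loop (or both the out-loop), the same one-way property keeps the connecting-path image there, so the total image is exactly that loop. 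The last possibility, in-loop$\mapsto$in-loop and out-loop$\mapsto$out-loop, forces the connecting path to sweep from one to the other; since single edges cannot skip vertices and the only route is through the target connecting path, $p$ is then surjective. Hence a non-surjective $p$ leaves exactly the two asserted images.

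Part (d) I expect to be the main work. Assuming $L_1\ge1$, for a surjective $p$ the loop-image lemma rules out the crossed configuration, so the in-loop maps onto the in-loop and the out-loop onto the out-loop; by (a) this yields $N_1\mid N$ and $M_1\mid M$, while the domain connecting path must traverse the target connecting path by single forward steps, forcing $L_1\le L$. Conversely these three conditions suffice by an explicit construction. To pin down $p$ and prove the pointed statement, I would parametrize $p$ by the transition step $s_*\in\{0,\dots,L-L_1\}$ at which the image first reaches the target branch vertex $N_1$; thereafter the walk is forced along $N_1\to N_1+1\to\cdots\to N_1+L_1$, so the domain vertex $N+s$ maps to target position $s-s_*$ precisely for $s_*\le s\le s_*+L_1$, wrapping in the in-loop before and in the out-loop after. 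Solving $p(N+s)=N_1+(s-s_*)$ for the datum $p(i)=j$ gives $s_*=(i-N)-(j-N_1)$, and the admissible range $0\le s_*\le L-L_1$ is exactly the pair of inequalities $i-N\ge j-N_1$ and $N+L-i\ge N_1+L_1-j$. Since $s_*$ also determines $p(N)$ (hence the in-loop phase) and $p(N+s_*+L_1)=N_1+L_1$ (hence the out-loop phase), the whole map is fixed, giving uniqueness.

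The conceptual engine is the loop-image lemma and the one-directionality of the connecting path, which drive both (c) and (d). I expect the real difficulty to be the bookkeeping in (d): translating the single discrete parameter $s_*$ and its range into the two stated inequalities, and verifying that the in-loop and out-loop phases it determines close up consistently across the wrap-around edges, which is precisely where $N_1\mid N$ and $M_1\mid M$ are used. These consistency checks, rather than any single hard idea, are the part that will demand the most care.
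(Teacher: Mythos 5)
Your treatment of (a), (b) and (d) is sound and is essentially the paper's own argument: rigidity of maps into a loop (the paper derives $K\mid M$ from $(\phi_M)^M=1_{[1,M]}$ forcing $(\phi_K)^M=1_{[1,K]}$, you derive it from the mod-$K$ congruence at the closing edges --- the same computation), and for (d) the in-loop-to-in-loop, out-loop-to-out-loop configuration, divisibility from (a), $L_1\le L$ from the forced traversal of the target connecting path, and uniqueness by propagating a single value; your parameter $s_*=(i-N)-(j-N_1)$ with admissible range $[0,L-L_1]$ is exactly the paper's pair of inequalities.

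Part (c), however, has a genuine gap as you argue it. Statement (c) carries no hypothesis $L_1\ge 1$ (that assumption is reserved for (d)), so the target may be a wedge, and both your loop-image lemma and your case elimination fail for wedge targets. The lemma ``the image of a loop under a system map into a dumbbell is the in-loop or the out-loop'' is false when $L_1=0$: the $4$-loop maps onto the \emph{whole} $2$-$0$-$2$ wedge (vertices $\{1,2,3\}$, edges $(1,2),(2,1),(2,3),(3,2)$) by sending its four vertices in cyclic order to $1,2,3,2$, because a closed walk can pass from one loop to the other through the shared wedge vertex. Your elimination of the crossed configuration likewise invokes the absence of a directed route from the out-loop back to the in-loop, which also fails when $L_1=0$. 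The conclusion of (c) does survive, since in every such wedge configuration the image turns out to be the entire target, so $p$ is surjective and falls outside the hypothesis of (c) --- but that observation is exactly what is missing from your argument. The paper sidesteps the issue with a different mechanism: the image of $p$ is a subsystem that is connected (any two of its points can be joined by a chain of forward or backward steps, since the domain dumbbell is connected), and the only proper subsystems of $\phi_{N_1,L_1,M_1}$ are the two end loops and, when $L_1\ge 1$, their disjoint union, which is disconnected; hence a non-surjective image must be a single loop, whether or not the target is a wedge. Either patch --- enlarging your lemma to ``in-loop, out-loop, or all of a wedge'' and noting the last case forces surjectivity, or switching to the subsystem classification --- repairs (c); as written, the step fails.
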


 \begin{proof} (a): $\phi_M$ is a map with $(\phi_M)^M = 1_{[1,M]}$ and so if $p$ maps $\phi_M$ to $\phi_K$ then
 $(\phi_K)^M = 1_{[1,K]}$ and so $K|M$. Since $[1,M]$ is the $\phi_M$ orbit of any of its points, the map $p$ is uniquely
 determined by its value on any point. By rotating in the $\phi_K$ loop we see that the value of $p(i)$ can be
 arbitrarily chosen.

 (b): By (a) applied to the restriction of $p$ to the inloop and the outloop, we see from (a) that for $p$ to exist we must
 have $K|N$ and $K|M$. In that case, the map from $[1,N + L + M - 1]$ to $[1,K]$ obtained by sending $i$ to its congruence class
 $mod \ K$ is a map from $\phi_{N,L,M}$ to $\phi_K$. Again we can rotate the $\phi_K$ loop to get an arbitrary value
 of $p(i)$.

  If $N \leq i \leq N+L$ then by moving forward and backward we see that
 $p$ is uniquely determined by the value $p(i)$. If $i$ is on the inloop then (a) implies that
 the restriction to the inloop is uniquely determined by $p(i)$.  Thus, $p(N)$ is determined and since $N$ is on the connecting
 path, $p$ is determined.  Similarly, if $i$ is on the outloop.

 (c): The only proper subsystems of the dumbbell are the inloop, the outloop  and their disjoint union.
 The image of $p$ is a subsystem
 in which any two elements can be connected by a chain.
 It follows that the image is either the inloop our the outloop when $p$ is not onto.
 When $p([1,N + L + M - 1]) = [1,N_1 + L_1 + M_1 - 1]$ then
 the map of systems is surjective by Lemma \ref{lem1.01a}(b).

 (d): When $L_1 > 0$, the dumbbell $\phi_{N_1,L_1,M_1}$ is not transitive.  The only transitive subsets are the inloop and the
 outloop.  The image of the inloop and the outloop of $\phi_{N,L,M}$ are transitive subsets. Furthermore, if $L = 0$ then
 $\phi_{N,L,M}$ would be transitive and with a transitive image.  Hence, $L > 0$.  Since $p$ maps
 $\OO \phi_{N,L,M}$ to $\OO \phi_{N_1,L_1,M_1}$ we see that $p$ takes inloop to inloop and outloop to outloop. Note that
 this also uses $L_1, L > 0$.  It thus follows from (a) that $N_1|N$ and $M_1|M$ are required.

 If $N_1 < j < N_1 + L_1$ then $j$ is not in one of the endloops and so any pre-image is not in an endloop.
 Also, $N_1 + L_1 - 1$ is not in the outloop and $N_1 + L_1 \in phi_{N_1,L_1,M_1}(N_1 + L_1 -1)$. Similarly,
 $N_1 \in \phi_{N_1,L_1,M_1}^{-1}(N_1 + 1)$. Thus, the preimage of the points
 of the connecting path in $\phi_{N_1,L_1,M_1}$ must lie in the connecting path of $\phi_{N,L,M}$.  Hence, $L \geq L_1$.
Furthermore, if $N_1 \leq j \leq N_1 + L_1$, i.e. $j$ lies on the connecting path in the image, and $p(i) = j$ then
$p(i-(j - N_1)) = N_1$ and $p(i + (N_1 + L_1 - j)) = N_1 + L_1$.  Hence, $i - (j - N_1)$ and $i + (N_1 + L_1 - j)$
lie in the connecting path of $\phi_{N,L,M}$. Thus, $N \leq i - (j - N_1)$ and $i + (N_1 + L_1 - j) \leq N + L$.

Finally, given $i$ and $j$ satisfying the above conditions, $p$ is uniquely defined by taking $i + k$ to $j + k$
for $N_1 - j \leq k \leq N_1 + L_1 - j$. At the endpoints we are in the inloop on the left and the outloop on the right.
We then move around the loops.
\end{proof}\vspace{.5cm}

{\bfseries Remark:}  Notice that when $N = N_1, L = L_1, M = M_1$ and $L_1 \geq 1$ then the only surjection between
the dumbbells has $i = j$.  That is, the identity map is the unique surjection from a dumbbell to itself when the connecting
path is nontrivial.
\vspace{.5cm}

We will call a closed, conjugacy invariant subset $K$ of $C_s(X)$ \emph{conjugacy transitive}\index{conjugacy transitive} when
it is the closure of the conjugacy class of
some $f \in K$, i.e. $K = \overline{H(X)\cdot f}$.  Such elements $f$ are called
\emph{conjugacy transitive elements} and  the set of
such elements is denoted $Trans(K)$. Thus, $K$ is  conjugacy transitive when $Trans(K)$ is nonempty,
in which case it is a dense $G_{\dl}$ subset of $K$. We call $K$ \emph{conjugacy minimal} when $Trans(K) = K$. By Theorem
\ref{theo3.11}(e)
\begin{equation}\label{4.01aa}
 f \in Trans(K) \qquad \Leftrightarrow \qquad \Gamma(f) \ = \ \Gamma(K).
 \end{equation}

A conjugacy transitive point
$f$ is of residual type when the conjugacy class $H(X) \cdot f$ is a $G_{\dl}$ subset of $K$. Since distinct conjugacy classes
are disjoint and two dense $G_{\dl}$ subsets meet, it follows that there is at most one dense $G_{\dl}$ conjugacy class. Theorem
\ref{theo3.19} implies that such residual type transitive points are contained in $H(X)$.

If $\QQ$ is a collection of surjective relations on members of $\I$ and $K \subset C_s(X)$ we will say that $\QQ$
\emph{generates}\index{generates} $\Gamma(K)$ if $\QQ \subset \Gamma(K)$ and every element of $\Gamma(K)$ is a factor of some relation in
$\QQ$.
\vspace{1cm}

{\bfseries Example 1 - $\mathbf{ C_s(X)}$:}
\vspace{.5cm}

By Theorem \ref{theo3.01}   $\Gamma(C_s(X))$ is the set of all surjective relations on members of $\I$.
Since the set of surjective relations on elements of $\I$ is countable,  Corollary \ref{cor3.02a} and the Baire Category theorem
imply that the set of $f \in C_s(X)$ with $\Gamma(f) = \Gamma(C_s(X))$ is a dense $G_{\dl}$
subset of $C_s(X)$. Thus, $C_s(X)$ is conjugacy transitive and this set is $Trans(C_s(X))$ by (\ref{4.01aa}).

Let $f, g \in C_s(X)$ with $g$ a conjugacy
transitive point of $C_s(X)$. If $ (X,g)$ is a factor of $(X,f)$ then $f$ is a conjugacy transitive point of
$C_s(X)$ because
$$\Gamma(C_s(X)) \ = \ \Gamma(g) \ \subset \ \Gamma(f) \ \subset \ \Gamma(C_s(X)).$$
In particular, if $g \in H(X)$ is a conjugacy transitive homeomorphism and $g_1$ is an arbitrary member of $C_s(X)$
then by using a homeomorphism from $X \times X$ to $X$, we can regard $f = g \times g_1$ as a member of $C_s(X)$
which has $g$ as a factor and so is conjugacy transitive.  On the other hand, if $g_1$ is not injective then
$f$ is not and so its conjugacy class is a subset of $C_s(X) \setminus H(X)$ which is dense in $C_s(X)$.

We describe a Shimomura sequence with the factoring property whose limit is a conjugacy transitive element of $C_s(X)$.
For $n \geq 2, i = 1,...,4^{n-1}$ let $\{ (I_{i;n},\phi_{i;n}) \}$ be a collection of $n!-2^{n+1}-n!$ dumbbells with
the sets  $\{ I_{i;n} \}$ pairwise disjoint. Label by
$a_{i;n}, e_{i;n}, b_{i;n}$ the vertices in $I_{i;n}$ at positions $n!, n! + 2^n, n! + 2^{n+1}$ respectively.  That is,
these are the vertices at the left end, the mid-point, and the right end of the connecting path for $\phi_{i;n}$.
Let $(J_1,\Phi_1)$ be a trivial system and for $n \geq 2$ let $(J_n,\Phi_n)$ be the disjoint union of the systems
$\{ (I_{i;n},\phi_{i;n}) : i = 1, \dots , 4^{n-1} \}$. $P_{2,1}$ is the unique map to the trivial system. For
$n \geq 2,$ $P_{n+1,n}$ on $(I_{i;n+1}, \phi_{i;n+1})$ is the unique map to $(I_{j;n},\phi_{j;n})$ taking
$e_{i;n+1}$ to $e_{j;n}$ when $i = 4(j-1) + 2$ or $i = 4(j-1) + 3$. When $i = 4(j-1) + 1$ it is the unique
map onto the inloop which takes $e_{i;n+1}$ to $a_{j;n}$. When $ i = 4(j-1) + 3$ it is the unique map onto the
outloop taking $e_{i;n+1}$ to $b_{j;n}$. Thus, each level $n$ dumbbell is the image of two level $n+1$ dumbbells
and each endloop at level $n$ is the image of a single level $n+1$ dumbbell. Hence, for $k \geq 1$
each level $n$ dumbbell is the image under $P_{n+k,n}$ of $2^k$ dumbbells and each endloop is the image of
$2^{k-1}$ dumbbells.

\begin{theo}\label{theo4.04} $\{ P_{n+1,n} : (J_{n+1},\Phi_{n+1}) \tto (J_n, \Phi_n) \}$ is an invertible pointed
Shimomura sequence which satisfies the factoring property.
If $(X,f)$ is the limit, then $f \in H(X)$ is a conjugacy transitive point for $C_s(X)$
of residual type. \end{theo}

\begin{proof} The map $P_{n+1,n}$ takes the first $2^n$ vertices of the connecting path for $\phi_{i;n+1}$ into
the same endloop as the initial vertex and the last $2^n$ vertices to the same endloop as the last vertex, it follows
that $P_{n+1,n}$ realizes a $\pm$ directional lift and so the sequence is an invertible Shimomura sequence.

If $(X,f)$ is the limit then $f \in H(X)$ and by Theorem \ref{theo3.04}  $\Gamma(f)$ is generated by the $\{ \Phi_n \}$. Every
finite union of dumbbells is clearly a factor of $(J_n,\Phi_n)$ for sufficiently large $n$ and by Proposition
\ref{prop4.01} every surjective relation is a factor of a finite union of dumbbells.  Hence, $\Gamma(f) = \Gamma(C_s(X))$ and
so $f$ is a conjugacy transitive point for $C_s(X)$.

It remains to check the factoring property which will imply that $f$ is of residual type.

If $p : (J_k, \Phi_k) \tto (J_1,\Phi_1)$ then $P_{k+1,k}$ provides the required factoring since $\Phi_1$ is trivial.
Let $p : (J_k, \Phi_k) \tto (J_n,\Phi_n)$ with $k > n > 1$. For each $i = 1,...,4^{n-1}$ the dumbbell $(I_{i;n},\phi_{i;n})$
is hit by at least one and at most $4^{k-1}$ dumbbells in $\Phi_k$.  In addition, each endloop may be the image of
some dumbbells in $\Phi_k$, again at most  $4^{k-1}$ of them.  Choose $m$ large enough that $2^{m-n-1} > 4^{k-1}$.

First we allocate each the dumbbells of $\Phi_m$ to a dumbbell of $\Phi_k$.
For each $\phi_{i;n}$ we distribute the dumbbells which hit it via $P_{m,n}$ among
those which hit it via $p$ so that each of the latter is allocated at least one.
The dumbbells which are mapped onto the inloop of $\phi_{i;n}$ are distributed among the
dumbbells, if any, which are mapped onto the inloop via $p$ again so that each of the latter receives at least one.
If there are none then these level $m$ dumbbells are allocated to map onto the inloop of some dumbbell of $\Phi_k$
which maps onto $\phi_{i;n}$ via $p$. Similarly, we allocate for the outloops.

Having made these allocations the maps are determined as follows. If $p$ maps $\phi_{r;k}$ onto $\phi_{i;k}$ then
the vertex $e_{i;k}$ is the image of a vertex $e$ in the connecting path for $\phi_{r;k}$. If $\phi_{s;m}$ has been
allocated to $\phi_{r;k}$ then we choose the unique map $(I_{s;m},\phi_{s;m}) \tto (I_{r;k},\phi_{r;k})$ which maps
$e_{s;m}$ to $e$. Because $m > k$ the inequalities given in Lemma \ref{lem4.03} (d) are satisfied.  If $P_{m,n}$
maps $\phi_{s;m}$ onto the inloop of $\phi_{i;k}$ then $\phi_{s;m}$ has been allocated to $\phi_{r;k}$ which either maps
onto $\phi_{i;n}$ or onto its inloop. In either case, we map $\phi_{s;m}$ onto the inloop of $\phi_{r;k}$ in such a way
that $e_{s;m}$ is mapped onto a vertex  of the inloop of $\phi_{r;k}$ which is mapped by $p$ onto $a_{i;k}$. There may be several
of these and any one will do.  We use a similar procedure for the outloops.
\end{proof}\vspace{.5cm}

An explicit description is given in \cite{AGW} of this map $f$, unique up to conjugacy, which is  conjugacy
transitive for $C_s(X)$ and is of residual type. The existence of such an element had earlier been proved in \cite{KR}.

Call  $f \in H(X)$ \emph{simple}\index{homeomorphism!simple} if there are only finitely many  chain recurrent points.
This is equivalent to saying that there are finitely many periodic points and the chain recurrent points
are periodic. Call $f \in C_s(X)$ \emph{very simple}\index{homeomorphism!very simple} if there are only finitely many chain recurrent points
and every point of $X$ is eventually mapped to a periodic point.
We now reprove a result of Batista et al \cite{BGT} showing that the simple homeomorphisms and the very simple
maps are each dense in $C_s(X)$.

Let $T$ be the translation homeomorphism on $\Z$, $T(n) = n+1$. With $C$ a Cantor set let
$\tilde X = \{ e_-,e_+ \} \cup \Z \times C$ be the two-point compactification of $\Z \times C$ so that
$\{ n \} \times C \to e_{\pm}$ as $n \to \pm \infty$. Let $\tilde f \in H(\tilde X)$ be the homeomorphism
which extends $T \times 1_C$ on $\Z \times C$.

\begin{prop}\label{prop4.05}  The homeomorphism $\tilde f$ is of residual type in $C_s(\tilde X)$.  \end{prop}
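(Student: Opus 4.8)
The plan is to realize $(\tilde X,\tilde f)$ as the inverse limit of an explicit pointed Shimomura sequence and to verify the factoring property of Definition \ref{df3.17}. Then Theorem \ref{theo3.18} gives that $\tilde f$ has the lifting property with respect to the sequence, and Theorem \ref{theo3.16} shows that the set of maps with that property is a $G_{\dl}$ set which, being nonempty, equals the conjugacy class of the limit, i.e. of $\tilde f$. This is exactly the assertion that $\tilde f$ is of residual type. Since $\tilde f\in H(\tilde X)$ is given, I do not need Theorem \ref{theo3.19}; I only need to produce one sequence that works.

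To build the sequence I would exploit the product form $\tilde f = T\times 1_C$. Choose cutoffs $K_n\to\infty$ (with $K_n\ge1$) and clopen partitions $\PP_n$ of $C$, refining with mesh tending to $0$, and let $\al_n$ be the partition of $\tilde X$ whose blocks are a clopen neighbourhood $A^{(n)}_-$ of $e_-$ (namely $e_-$ together with the fibres $\{m\}\times C$ for $m\le -K_n-1$), the symmetric block $A^{(n)}_+$ at $e_+$, and the sets $\{m\}\times D$ for $-K_n\le m\le K_n$ and $D\in\PP_n$. These form a basic sequence of partitions, so by Theorem \ref{theo3.07} the relations $\phi_n=\tilde f^{\al_n}$ form an invertible Shimomura sequence with inverse limit $(\tilde X,\tilde f)$; passing to the pointed extension makes it pointed. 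A direct computation of $\tilde f^{\al_n}$ shows that $\phi_n$ is the system with two fixed poles $-$ and $+$ (self-loops, the images of $e_\pm$) joined by $r_n=|\PP_n|$ pairwise internally disjoint directed paths of common length $\ell_n=2K_n+2$: because $\tilde f$ fixes the $C$-coordinate the paths never interact in their interiors and merge only at the two poles.

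The heart of the argument is the factoring property, and for that I would first classify the surjections between two such systems. A system surjection $\phi_k\tto\phi_n$ must send each pole to the corresponding pole (there is no direct $-\to+$ edge), and each source path, being forced from $-$ to $+$, covers exactly one target path by a monotone traversal, dwelling at $-$ for an initial stretch and at $+$ for a final stretch. Hence such a surjection is precisely the data of an onto assignment $\sigma\colon\{1,\dots,r_k\}\tto\{1,\dots,r_n\}$ of paths together with, for each source path, an offset $a\in\{0,\dots,\ell_k-\ell_n\}$ recording the initial dwell length, and offsets add under composition. Now, given an arbitrary $q_1\colon(I_k,\phi_k)\tto(I_n,\phi_n)$ with data $(\sigma_1,(a_1))$, I must factor the bonding map as $p_{m,n}=q_1\circ q_2$. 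The map $p_{m,n}$ has path-assignment $\sigma_{m,n}$ given by $\PP_m$-into-$\PP_n$ containment and the uniform offset $K_m-K_n$ on every path. I would take $\sigma_2$ to be any onto lift with $\sigma_1\circ\sigma_2=\sigma_{m,n}$, and give each level-$m$ path $P$ the offset $K_m-K_n-a_1(\sigma_2(P))$; then $q_2$ is a surjection, the offsets compose to $K_m-K_n$, the paths compose to $\sigma_{m,n}$, and therefore $q_1\circ q_2=p_{m,n}$.

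The main obstacle is arranging that $\sigma_2$ can be made onto and that the required offsets are legal \emph{simultaneously}, for an arbitrary given $q_1$; both are secured by taking $m$ large. Since $C$ is perfect, each $\PP_n$-block eventually contains arbitrarily many $\PP_m$-blocks, so $|\sigma_{m,n}^{-1}(R)|\ge r_k\ge|\sigma_1^{-1}(R)|$ for every target path $R$ once $m$ is large, which lets me choose $\sigma_2$ onto while respecting $\sigma_1\circ\sigma_2=\sigma_{m,n}$. Since the finitely many offsets $a_1$ of $q_1$ are bounded by $\ell_k$, while $\ell_m-\ell_n=2(K_m-K_n)\to\infty$, the offsets $K_m-K_n-a_1$ fall in the admissible range $\{0,\dots,\ell_m-\ell_k\}$ for $m$ large. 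The case $n=1$ (the trivial target of the pointed extension) is immediate. This establishes the factoring property, and Theorems \ref{theo3.18} and \ref{theo3.16} then complete the proof.
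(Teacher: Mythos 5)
Your proposal is correct and takes essentially the same route as the paper: the paper also realizes $(\tilde X,\tilde f)$ as the inverse limit of a pointed invertible Shimomura sequence whose $n$-th level consists of parallel threads (indexed there by $\{1,2\}^n$, of length $2^{n+1}$) joined at two self-looped poles, and it verifies the factoring property via the same rigidity facts (poles to poles, each thread onto a single thread, the map on a thread determined by one value) followed by an allocation argument for $m$ large. Your offset bookkeeping simply spells out the details the paper leaves ``to the interested reader.''
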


\begin{proof} We will sketch the proof leaving the final details
to the interested reader.

We use as our Cantor set $C = \{ 1,2 \}^{\N}$, with $C_N = \{ 1,2 \}^N$ the set of words of length $N$. We define an
inverse sequence. Let $(I_1,\phi_1)$ be the trivial system.
For $n \geq 2$, let
\begin{equation}\label{4.01}
\begin{split}
I_n \quad = \quad [1,2^{n+1}] \times C_n \cup \{ (1,1), (2^{n+1},1) \}, \hspace{2cm} \\
\phi_n \quad = \quad \{ ((i,w),(i+1,w) : 1 \leq i < 2^{n+1}, w \in C_n \}  \quad  \cup\\
 \{ ((1,1),(1,w)) : w \in C_n \cup \{ 1 \} \} \ \cup \ \{ ((2^{n+1},w),(2^{n+1},1) : w \in C_n \cup \{ 1 \} \}.
 \end{split}
 \end{equation}
 and
 \begin{equation}\label{4.02}
 p_{n+1,n}(i,wa) \quad = \quad \begin{cases} (1,1) \quad \mbox{for} \ 1 \leq i \leq 2^{n}, wa \in C_{n+1} \cup \{ 1 \} \\
 (i-2^n,w) \quad \mbox{for} \ 2^n < i \leq 2^n + 2^{n+1},  wa \in C_{n+1}, \\
 (2^{n+1},1) \quad \mbox{for} \ 2^n + 2^{n+1} < i \leq 2^{n+2}, wa \in C_{n+1} \cup \{ 1 \}. \end{cases}
 \end{equation}

 Thus, we think of $I_n$ as a set of parallel threads indexed by $C_n$ and gathered at the endpoints  $(1,1)$ and
 $(2^{n+1},1)$.

 Define the map $h_n : \tilde X \to I_n$ by mapping $e_-$ to $(1,1)$, $e_+$ to $(2^{n+1},1)$ and
 \begin{equation}\label{4.03}
 (i,z) \ \mapsto \ \begin{cases} (1,1) \quad \mbox{for} \ i \leq -2^{n}, \\
 (i + 2^{n}, \pi_n(z)) \quad \mbox{for} \ -2^n < i \leq 2^n, \\
 (2^{n+1},1) \quad \mbox{for} \ 2^n < i. \end{cases}
 \end{equation}
 It is easy to check that $\{ h_n \}$ induces an isomorphism from $(\tilde X, \tilde f)$ to the inverse limit.
 Also it is clear that the sequence is an invertible Shimomura sequence.

 Now suppose $p : (I_k,\phi_k) \tto (I_n, \phi_n)$ with $k > n > 1$. The factoring property is obtained from
 the following observations.
 \begin{itemize}
 \item Each endpoint of $\phi_k$ is mapped to an endpoint of $\phi_n$.
 \item If the two endpoints of $\phi_k$ were mapped to the same endpoint of $\phi_n$ then the image, which
 is $\phi_n$ would be a transitive relation which it is not.
 \item The orderings given by $\OO \phi_k$ and $\OO \phi_n$ then imply that the left endpoint is mapped to the left and the
 right to the right.
 \item Each thread of $\phi_k$ is mapped by $p$ onto a thread of $\phi_n$.
 \item If $(i,w)$ is a vertex on a thread in $\phi_n$ and it is the image of the $w'$ thread in $\phi_k$ then
 there is a unique $j$ such that $p(j,w') = (i,w)$ and this equation uniquely determines $p$ on the $w'$ thread.
 \item If $m > k > n$ then each thread in  $I_n$ is the image of $2^{m-n}$ threads in $I_m$.
 \end{itemize}

 It follows that with $m$ large enough we can use the image under $p$ of each $I_k$ thread to allocate
 at least one $I_m$ thread with the same image under $p_{m,n}$. Then we pick a point in each $I_n$ thread and
 pull back to define the map uniquely given the allocations.
 \end{proof}\vspace{.5cm}


If $\al_L : \tilde X \to [1,L]$ by
\begin{equation}\label{4.03a} \al_L(x) \quad = \quad
\begin{cases} 1 \quad \mbox{for} \ x = e_-, \ \mbox{ and} \ x = (i,c) \ \mbox{with} \ i \leq 1, \\
i \quad \mbox{for} \ \ x = (i,c) \ \mbox{with} \ 1 \leq i \leq L, \\
L  \quad \mbox{for} \ x = e_+, \ \mbox{ and} \ x = (i,c) \ \mbox{with} \ i \geq L,\end{cases}
\end{equation}
then $\tilde f^{\al_L} = \phi_{1,L,1}$.
Hence, for every
positive $L$ the $1-L-1$ dumbbell is a factor of $(\tilde X,\tilde f)$.

For $N$ a positive number, we define the factor map $ \pi_N : (\tilde X_N, \tilde f_N) \tto (\tilde X, \tilde f)$ by
\begin{equation}\label{4.04}
\pi_N(x,n) \quad = \quad \begin{cases}  e_{\pm} \quad \mbox{for} \ x = e_{\pm}, \quad 1 \leq n \leq N, \\
(N(i-1) + n,c) \quad \mbox{for} \ x = (i,c), \ i \in \Z, \ \ 1 \leq n \leq N. \end{cases}
\end{equation}
The map is one-to-one except over the endpoints which are lifted to periodic orbits of period $N$. By using the
identification defined by (\ref{1.04}) and the functoriality of the suspension operation we obtain
a natural factor map $\pi_{N,M} : (\tilde X_N, \tilde f_N) \tto (\tilde X_M, \tilde f_M)$ whenever, $M$ is a divisor of $N$.
That is, $\pi_{N,M}$ is the $M$-fold suspension of the map $\pi_{N/M}$.

In the language of \cite{AGW} the threads become \emph{spirals}.

Let $(X_n,f_n)$ be the disjoint union of $n!$ disjoint copies of the $n!$-fold suspension of $(\tilde X, \tilde f)$.
By decomposing the collection of copies in $f_{n+1}$ by sets of size $n+1$
and then using $\pi_{(n+1)!,n!}$ with a common range on each set of copies, we obtain a
an inverse sequence of maps $p_{n+1,n} : (X_{n+1},f_{n+1}) \tto (X_n,f_n)$.  Let $ (X_{\infty},f_{\infty}) $
be the inverse limit.  By choosing homeomorphisms to a common space $X$, we can regard $f_n \in H(X)$
for $1 \leq n \leq \infty$.

\begin{theo}\label{theo4.06} If $K_0 \subset H(X)$ is the union of the conjugacy classes of $f_n$ for $1 \leq n < \infty$
then $K_0$ is a conjugacy invariant collection of simple homeomorphisms which is dense in $C_s(X)$. The homeomorphism
$f_{\infty}$ is a conjugacy transitive point for $C_s(X)$. \end{theo}

\begin{proof} By Proposition \ref{prop4.01} every surjective relation on an element of $\I$ is a factor of a finite union of
dumbbells and every finite union of dumbbells is a factor of $f_n$ for $n$ sufficiently large. Hence,
$\Gamma(K_0) = \Gamma(C_s(X))$.  By Theorem \ref{theo3.11} (e) $K_0$ is dense in $K$.

Since every $f_n$ is a factor of $f_{\infty}$ it follows that
$\Gamma(C_s(X)) = \Gamma(K_0) \subset \Gamma(f_{\infty}) \subset \Gamma(C_s(X)).$
So $f_{\infty}$ is a conjugacy transitive homeomorphism.
\end{proof}\vspace{.5cm}

Let $\tilde X^+ = \{ e_- \} \cup \{(i,c) \in \tilde X : i \leq 0 \}$. Choose $c_0 \in C$ and let
$e_0 = (0,c_0)$ and define $\tilde f^+ \in C_s(\tilde X^+)$ by
\begin{equation}\label{4.05}
\tilde f^+(x) \quad = \quad \begin{cases} \tilde f(x) \quad \mbox{for}  \  x = e_-, \ \mbox{and} \ x = (i,c), i < 0, \\
e_0 = (0,c_0)  \quad \mbox{for} \ x = (0,c). \end{cases}
\end{equation}

Thus, $\tilde f^+$ is a surjective map and that every point except the fixed point $e_-$
is eventually mapped to the fixed point $e_0$.

Let $(X^+_n,f^+_n)$ be the disjoint union of $n!$ disjoint copies of the $n!$-fold suspension of $(\tilde X^+, \tilde f^+)$.
Again we can choose homeomorphisms to a common space $X$ and regard $f_n \in C_s(X)$ for $1 \leq n < \infty$.

\begin{theo}\label{theo4.07} If $K_0 \subset C_s(X)$ is the union of the conjugacy classes of $f^+_n$ for $1 \leq n < \infty$
then $K_0$ is a conjugacy invariant collection of very simple mappings which is dense in $C_s(X)$. \end{theo}

\begin{proof}  Just as in Theorem \ref{theo4.06}.
\end{proof}\vspace{.5cm}

Recall that for  $f \in C_s(X)$, $ Per(f) = \{ n : |f^n| \not= \emptyset \}$. So, for example, $f$ has a fixed point iff
$1 \in Per(f)$, in which case $Per(f) = \N$.
By Proposition \ref{prop1.05} the set
$$C_s(X;Per \supset Q) \quad = \quad \{ \ f \in C_s(X) \ : \ Q \subset Per(f) \ \}$$
is a closed, conjugacy invariant subset of $C_s(X)$ for any $Q \subset \N$. Recall
that we write $C_s(X;1)$ for $C_s(X;Per \supset \{ 1 \})$.
So $C_s(X;1)$ is the closed set of maps which admit a fixed point.

\begin{theo}\label{theo4.08} For any $Q \subset \N$, the set $C_s(X;Per \supset Q)$ is a conjugacy transitive subset
of $C_s(X)$. With  $1 \in Q$ the set  $C_s(X; 1)$  has a conjugacy
transitive element of residual type. \end{theo}

\begin{proof} Recall that if $R_2$ is a factor of $R_1$ then $Per(R_1) \subset Per(R_2)$. Hence, if
$\phi \in \Gamma(C_s(X; Per \supset Q))$ then $Q \subset Per(\phi)$. Conversely, if $Q \subset Per(\phi)$ then
Theorem \ref{theo3.02} implies that $\Gamma^{-1}(\phi) \cap C_s(X; Per \supset Q)$ is dense in $C_s(X; Per \supset Q)$.
It follows that $\Gamma(C_s(X; Per \supset Q)) $ consists exactly of the surjective relations $\phi$ on members of
$\I$ such that $Q \subset Per(\phi)$. Since $\Gamma^{-1}(\phi)$ is open, the Baire Category Theorem implies that
$C_s(X; Per \supset Q)$ is a conjugacy transitive set.

Now consider the fixed point case. It is clear from Proposition \ref{prop4.01} that every surjective relation $\phi$
with $|\phi| \not= \emptyset$ is a factor of a disjoint union of dumbbells together with a single extra vertex which is
related only to itself. Now we adjust the Shimomura sequence of Theorem \ref{theo4.04} to obtain
$P^*_{n+1,n} : (I^*_{n+1}, \Phi^*_{n+1}) \tto (I^*_{n}, \Phi^*_{n}) \}$ as follows:
At every level $n \geq 2$ we obtain $(J^*_n,\Phi^*_n)$
we adjoin a single new pair $\{1 \}, \{ (1,1) \}$. We alter $P_{n+1,n}$ so that $P^*_{n+1,n}$ it maps $1$ at level
$n+1$ and the dumbbell $I_{2,n+1}$ both to $1$ at level $n$. Above $1$ at level $m > n$ there is $1$ and $2^{m-n-1}$
dumbbells. So if $p$ is any map from level $k$ onto level $n$ we can allocate to the $k$ level dumbbells mapped to $1$ and
to $1$ at level $k$, all of the dumbbells at level $m$ which are mapped by $P_{m,n}$ down to $1$ at $n$.

Again we leave the details to the reader.

The result is an invertible Shimomura sequence satisfying the factoring property and with limit $f$ have a unique fixed
point over the $1$'s and with $\Gamma(f)$ containing arbitrarily large unions of large dumbbells.  It follows that
$f$ is a conjugacy transitive point for $C_s(X;1)$ which is of residual type.
\end{proof}\vspace{.5cm}

{\bfseries Example 2 -  $\mathbf{CR(X)}$ and $ \mathbf{ CT(X)}$:}
\vspace{.5cm}

Recall that  $CR(X)$ and $CT(X)$ are the closed, conjugacy invariant subsets of chain
recurrent elements of $C_s(X)$ and of chain transitive elements of $C_s(X)$, respectively.
By Proposition \ref{prop4.01}
$\Gamma(CR(X))$ is generated by the finite disjoint unions of loops and $\Gamma(CT(X))$ is generated by the loops.

\begin{theo}\label{theo4.09} (a) $CR(X)$ is a conjugacy transitive
subset of $C_s(X)$ with a transitive element of residual type.

(b) For $g \in C_s(X)$ chain recurrent, let $g_n \in C_s(X)$ be
isomorphic to $n$ disjoint copies of the $n!$-fold suspension of $g$.
If $K_0$ is the union of the conjugacy classes of $g_1, g_2, ... $, then $K_0$  is a
dense subset of $CR(X)$.

(c) If $K_1$ is the set of $f \in H(X)$ with a dense  set of periodic points, $K_1$ is
a dense subset of $CR(X)$.

(d) For any $Q \subset \N$, the set $C_s(X;Per \supset Q)\cap CR(X)$
is a conjugacy transitive subset of $C_s(X)$.The set
$ CR(X; 1)$  of chain recurrent maps
admitting a fixed point  has a conjugacy transitive point of
residual type.
\end{theo}

\begin{proof}(a) Theorem \ref{theo3.02} implies that for $\phi$ any recurrent relation
on an element of $\I$, the set $\Gamma^{-1}(\phi) \cap CR(X)$ is open and dense.  As there are countably many such relations
$\phi$, it follows from the Baire Category Theorem that $CR(X)$ is a transitive set.

Let $\{ k_n \}$ be an increasing sequence with $k_1 = 1$ and such that $k_n | k_{n+1}$.
For $n \geq 2$, let $\{ (I_n, \phi_{i,n} : i = 1,\dots,2^{n-1} \}$
be disjoint $k_n$-loops with $e_{i,n}$ a chosen point of $I_n$, and let $(J_n, \Phi_n)$ be their disjoint union.
Let $(J_1,\Phi_1) = ( \{ 1 \}, \{ (1,1) \})$ with $e_{11} = 1$. Define
$P_{n+1,n} : (J_{n+1}, \Phi_{n+1}) \to (J_n, \Phi_n)$ be the map uniquely defined by
$$ P_{n+1,n}(e_{i,n+1}) = e_{i,n} = P_{n+1,n}(e_{i + 2^{n-1},n+1}) \quad \mbox{ for}  \quad i = 1, \dots ,2^{n-1}.$$
Using the now familiar allocation argument and Lemma \ref{lem4.03}(a), it is easy to
check that this defines an invertible Shimomura sequence which satisfies the factoring
property.  The inverse limit is clearly chain recurrent. If we choose $k_n = n!$ then
every finite disjoint union of loops is a factor of $\Phi_n$ for $n$ sufficiently large, and so
the resulting inverse limit is a transitive element of $CR(X)$ of residual type.

(b) If $g$ is chain recurrent then every $g_n$ is chain recurrent and any finite disjoint
union of loops is a factor of $g_n$ for $n$ sufficiently large.
By Theorem \ref{theo3.11} (e) $K_0$ is dense in $CR(X)$.

(c) In (b) use $g \in H(X)$ with dense periodic points, e. g. the shift homeomorphism on $\{ 0,1 \}^{\Z}$.
Then every element of $K_0$ has dense periodic points, i.e. $K_0 \subset K_1$.

(d) Again Theorem \ref{theo3.02} implies that $C_s(X;Per \supset Q)\cap CR(X)$
is a conjugacy transitive subset of $C_s(X)$.

In the fixed point case we proceed exactly as in Theorem \ref{theo4.08}. We define
$P^*_{n+1,n} : (I^*_{n+1}, \Phi^*_{n+1}) \tto (I^*_{n}, \Phi^*_{n}) \}$ by adjoining
$(\{ 1 \}, \{ (1,1) \})$ at every level $n$ with $n \geq 2$. The we alter the map $P_{n+1,n}$ so that
$$ P^*_{n+1,n} (1) = 1 = P^*_{n+1,n}(e_{1,n+1}),$$
but is otherwise unchanged. Again it is easy to check that the sequence satisfies the factoring property.
It is clear that  $\Gamma(CR(X; 1))$ is generated by finite
unions of loops together with a single disjoint fixed point.
Hence, when $k_n = n!$ the limit is a conjugacy transitive element of $CR(X ; 1)$.
\end{proof}\vspace{.5cm}

Call $\{ k_n \}$ a \emph{divisibility sequence}\index{divisibility sequence} when it is an increasing
sequence of positive integers such that $k_1 = 1, k_n | k_{n+1}$ for all $n \in \N$.,
For such a sequence the sequence of group homomorphisms
$$\{ p_{n+1,n} : (\Z/k_{n+1}\Z,\phi_{n+1})  \tto (\Z/k_n \Z, \phi_n) \}$$ defines
an inverse sequence of loops where each $\phi_n$ is translation by $1$. Let $P^* = \{ k \in \N : k | k_n $ for
sufficiently large $ n \}$. The inverse limit $(X,f)$ is a minimal system which maps onto a $k$-loop iff $k \in P^*$.
It is called the \emph{adding machine}\index{adding machine} or \emph{odometer} associated with the sequence $\{ k_n \}$ although it really
depends only on the set $P^*$.  When $P^* = \N $, e.g. when $k_n = n!$, then the system is called the
\emph{universal adding machine}\index{adding machine!universal}.  It factors onto every loop.

It is easy to see that the homeomorphism constructed in part (a) above is the product of the identity on a Cantor set with
the adding machine  associated with the sequence $\{ k_n \}$. In particular, as noted by Shimomura, \cite{S3},
the product of the identity on a Cantor set
with the universal adding machine is a transitive element of residual type for $CR(X)$.

\begin{theo}\label{theo4.10} (a) If $\{ k_n \}$ is a divisibility sequence then
$$\{ \ p_{n+1,n} : (\Z/k_{n+1}\Z,\phi_{n+1})  \tto (\Z/k_n \Z, \phi_n) \ \}$$
is an invertible Shimomura sequence satisfying the factoring property and so the associated adding machine
is of residual type.

(b)  $CT(X)$ is a conjugacy transitive
subset of $C_s(X)$ with a the universal adding machine transitive element of residual type.

(c) For $g \in C_s(X)$ chain transitive, let $g_n \in C_s(X)$ be
isomorphic to the $n!$-fold suspension of $g$.
If $K_0$ is the union of the conjugacy classes of $g_1, g_2, ... $, then $K_0$  is a
dense subset of $CT(X)$.
\end{theo}

\begin{proof}  It is easy to check the factoring property for the inverse sequence associated
with a divisibility sequence.  For the universal adding machine every loop is a factor and so it is
a transitive element for $CT(X)$. This proves (a) and (b). Finally, (c) is proved just like (b) of
Theorem \ref{theo4.09}.
\end{proof}\vspace{.5cm}

Part (b) was first shown by Hochman in \cite{H}.

\begin{cor}\label{cor4.11} If $f \in C_s(X)$ is a conjugacy transitive element of $C_s(X), CR(X)$ or $CT(X)$ then
$f$ admits no periodic points, i. e. $|f^n| = \emptyset$ for all $n \in \N$. \end{cor}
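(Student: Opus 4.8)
The plan is to combine the period-monotonicity of factor maps with the representation characterization (\ref{4.01aa}), which says that a conjugacy transitive element $f$ of $K$ satisfies $\Gamma(f) = \Gamma(K)$. First I would recall that whenever a surjective relation $\phi$ lies in $\Gamma(f)$, say $\phi = f^{\al}$ for a partition $\al : X \tto I$, the map $\al : (X,f) \tto (I,f^{\al})$ is a surjective system map, so the factor inclusion of periods gives $Per(f) \subset Per(f^{\al}) = Per(\phi)$. Intersecting over all representing relations yields $Per(f) \subset \bigcap_{\phi \in \Gamma(f)} Per(\phi)$, and it therefore suffices to exhibit, for each $n \in \N$, a relation $\phi \in \Gamma(f)$ with $n \notin Per(\phi)$.

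The key source of such relations is the family of loops. For each of the three sets $K = C_s(X)$, $CR(X)$, $CT(X)$, every loop belongs to $\Gamma(K)$: the set $\Gamma(C_s(X))$ is the collection of all surjective relations on members of $\I$, while by Proposition \ref{prop4.01} and the discussion preceding this corollary, $\Gamma(CR(X))$ is generated by finite disjoint unions of loops and $\Gamma(CT(X))$ is generated by loops, so in each case every loop is itself a member of $\Gamma(K)$. (As a consistency check, $CT(X) \subset CR(X) \subset C_s(X)$ gives $\Gamma(CT(X)) \subset \Gamma(CR(X)) \subset \Gamma(C_s(X))$.) Since $f$ is conjugacy transitive, $\Gamma(f) = \Gamma(K)$ contains every loop in all three cases.

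The computation is then immediate. Fix $n \in \N$ and take the $(n+1)$-loop $\phi_{n+1}$, i.e. translation by $1$ on $\Z/(n+1)\Z$. Here $(i,i) \in \phi_{n+1}^{k}$ exactly when $(n+1) \mid k$, so $Per(\phi_{n+1}) = (n+1)\N$, whose least element $n+1$ exceeds $n$; hence $n \notin Per(\phi_{n+1})$. Because $\phi_{n+1} \in \Gamma(f)$, the inclusion $Per(f) \subset Per(\phi_{n+1})$ forces $n \notin Per(f)$, that is $|f^n| = \emptyset$. As $n$ was arbitrary, $f$ has no periodic points. I do not expect a real obstacle here: the only points needing care are that the period inclusion runs in the factor direction $Per(f) \subset Per(\phi)$, already established for surjective system maps in Section \ref{relations}, and that loops of every length are genuinely available as representing relations, which is exactly what the three generation statements supply. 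The essential observation is simply that a single loop of length larger than $n$ already excludes $n$ from the period set of any map it represents.
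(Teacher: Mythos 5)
Your proposal is correct, and every step you use is available in the paper at the point where the corollary appears: the period monotonicity $Per(f) \subset Per(f^{\al})$ for the surjective system map $\al : (X,f) \tto (I,f^{\al})$ comes from Section \ref{relations}, the identity $\Gamma(f) = \Gamma(K)$ for conjugacy transitive elements is (\ref{4.01aa}), and the fact that single loops of every length lie in $\Gamma(C_s(X))$, $\Gamma(CR(X))$ and $\Gamma(CT(X))$ follows from Theorem \ref{theo3.01}, Proposition \ref{prop4.01} and the generation statements of Examples 1 and 2 (recall that ``$\QQ$ generates $\Gamma(K)$'' includes $\QQ \subset \Gamma(K)$). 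However, your route is genuinely different from the paper's. The paper argues topologically: for each $n$ the condition $n \in Per(f)$, i.e. $|f^n| \not= \emptyset$, is closed (Proposition \ref{prop1.05}(a)) and conjugacy invariant, hence if it held for $f$ it would hold on the closure of the conjugacy class of $f$, which is all of $K$; but all three sets contain the adding machines, which have no periodic points, a contradiction. So the paper's proof is a one-liner given that the adding machines have already been constructed and placed in $CT(X) \subset CR(X) \subset C_s(X)$, and it illustrates the useful general principle that closed conjugacy-invariant properties propagate to orbit closures. Your argument instead extracts the finite core of that phenomenon: rather than needing an actual infinite system without periodic points to exist inside $K$, you only need, for each $n$, one representing relation whose period set misses $n$, and the $(n+1)$-loop does this since $Per(\phi_{n+1}) = (n+1)\N$. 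This is slightly more general (it shows that any $f \in C_s(X)$ whose representation set $\Gamma(f)$ contains loops of arbitrarily large length admits no periodic points) and is also, in a sense, the mechanism underlying the paper's proof, since the adding machine's lack of periodic points is itself witnessed by its factor maps onto loops of every length.
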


\begin{proof} If $n \in Per(f)$ is a closed, conjugacy invariant condition. So if it is true for $f$, then
it is true for every element of the closure of the conjugacy class of $f$.  In particular, since the adding machines
have no periodic points, they cannot be the in the closure of such a conjugacy class.
\end{proof}\vspace{.5cm}

{\bfseries Example 3 - One Point Compactification of Incomparable Adding Machines:}
\vspace{.5cm}

It can happen that an element $f \in H(X)$ of residual type is the inverse limit of a sequence which does not satisfy the
factoring condition. This shows that the factoring property is a property of the sequence itself and not just of the
inverse limit.

Let $ p_i$ be the $i^{th}$ prime number in $\N$  counted in increasing order.
Observe that for $n, k \geq 1$  a $p_i^n$ loop cannot map to a $p_j^k$ if $i \not= j$.Fix a positive number $K$.
Let $(J_K,1_K) = (\{ 1, \dots, K \}, \{ (1,1), \dots, (K,K) \})$.

Now let $(I_1, \phi_1) = (J_1,1_1)$.  For $n > 1$ let $(I_n, \phi_n)$ be the disjoint union of
$(J_{K},1_{K})$ and a $p_i^n$ loop for $i = 1, \dots, Kn$. The map $p_{n+1,n} :(I_{n+1},\phi_{n+1}) \to (I_n,\phi_n)$
maps as follows:
\begin{itemize}
\item $(J_{K},1_{K})$ is mapped to the trivial loop $(J_1, 1_1) \})$ in $(J_K, 1_K)$.
\item The $p_i^{n+1}$ loop is mapped to $(\{ j \}, \{ (j, j) \})$ for $j = 1, \dots ,K$ and $i = Kn + j$.
\item The $p_i^{n+1}$ loop is mapped to the $p_i^n$ loop for $i= 1, \dots, Kn$.
\end{itemize}

For every $K$ the inverse sequence is an invertible Shimomura sequence and the inverse limits are all the same.  The common
limit  is the one-point compactification of the disjoint union of
$\{ p_i^n \}$ adding machines.

For $K = 1$ it is easy to check that the inverse sequence satisfies the factoring property.  Suppose $k > n > 1$ and $p$ maps
$(I_k, \phi_k)$ to $(I_n, \phi_n)$. Then $p$ must map
the $p_i^k$ loop to the $p_i^n$ loop for $i = 1, \dots, Kn = n$ and the remaining loops and the point $\{ 1 \}$ must map to
$(J_1, 1_1)$.  From this, the factoring is easy to obtain. It follows that the inverse limit is of residual type.

On the other hand, for any $K > 1$ the sequence does not satisfy the factoring property.  The map $p$ can only factor
when for $i = Kn + (k - n)K + j$ with $j = 1, \dots, K$, the $p_i^k$ loop is mapped to $ (\{ j \}, \{ (j,j) \})$.
But if $p$ does this then we can compose with a nontrivial permutation of $\{ 1, \dots, K \}$ to obtain a map
which does not factor.   Similarly, no subsequence satisfies the factoring property.

%
%
\newpage

{\bfseries Example 4 -  $\mathbf{CM(X)}$ and $\mathbf{CM(X;1)}$:}
\vspace{.5cm}

In general, a continuous surjective map on a compact metric space is chain mixing iff it is chain transitive and
in addition it has no nontrivial loop as a factor.  See \cite{A1} Chapter 8, Exercise 22, as well as
\cite{RW}.  In particular, if $f$ is chain transitive and has a fixed point then it is chain mixing. Thus,
$CM(X;1) = CM(X) \cap C_s(X;1) = CT(X) \cap C_s(X;1)$ is the set of chain mixing maps which admit a fixed point.

We recall a standard numerical lemma.

\begin{lem}\label{lem4.12} Given a pair of relatively prime, positive integers, every sufficiently large positive integer is a
positive mixture of these two.  In detail, for $M, N \in \N$ if $1 = xM - yN$ with $x,y \geq 0$ and $K \geq M + (yN + 1)N$ then
there exist $a, b \in \N$ such that $K = aM + bN$. \end{lem}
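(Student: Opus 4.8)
The plan is to produce the representation $K = aM + bN$ constructively: first pin down $b$ by a congruence that forces $M \mid (K - bN)$, confining $b$ to the window $\{1, \dots, M\}$, and then recover $a = (K - bN)/M$, reading off the positivity of $a$ from the hypothesis on $K$. The shape of the threshold $M + (yN+1)N$ is no accident: the identity $xM = yN + 1$ lets us rewrite it as $M + xMN = M(1 + xN)$, and it is precisely this reserve that guarantees $a \geq 1$.

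First I would record two elementary consequences of $1 = xM - yN$ with $x, y \geq 0$. Since $xM = yN + 1 \geq 1$ we must have $x \geq 1$, so that $yN + 1 = xM \geq M$; and reducing modulo $M$ gives $(-y)N \equiv 1 \pmod{M}$, i.e. $-y$ is a multiplicative inverse of $N$ modulo $M$, whence in particular $\gcd(N,M) = 1$. Because $N$ is invertible modulo $M$, the residues $N, 2N, \dots, MN$ run through every class modulo $M$, so there is a unique $b \in \{1, \dots, M\}$ with $bN \equiv K \pmod{M}$. For this $b$ the integer $a = (K - bN)/M$ is well defined, and $b \geq 1$ holds automatically.

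It remains to check that $a \geq 1$, equivalently $K - bN \geq M$, and this is the single step where the hypothesis on $K$ is used. Since $b \leq M$ we have $bN \leq MN$, hence
\[ K - bN \ \geq \ K - MN \ \geq \ M + (yN+1)N - MN \ = \ M + N\big((yN+1) - M\big). \]
By the first step $(yN + 1) - M = (x-1)M \geq 0$, so $K - bN \geq M$ and therefore $a \geq 1$. This exhibits $K = aM + bN$ with $a, b \in \N$, as required.

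The only genuine subtlety — and the reason the statement carries an explicit numeric bound rather than the cleaner Frobenius threshold — is that we need \emph{both} coefficients strictly positive. Allowing $a, b \geq 0$ would reduce to the classical fact that every integer beyond $MN - M - N$ is representable; demanding $a, b \geq 1$ effectively shifts the problem by $M + N$, and it is this that forces the margin $\big((yN+1) - M\big)N = (x-1)MN$ built into the hypothesis. The chosen bound is deliberately non-optimal, but it is exactly what makes the lone inequality above transparent, so I do not expect any real obstacle beyond verifying that estimate.
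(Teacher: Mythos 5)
Your proof is correct, but it takes a genuinely different route from the paper's. The paper argues by direct construction: writing $K = M + (yN+1)N + j$ with $j \geq 0$ and dividing $j$ by $N$, say $j = j_1 N + j_2$ with $j_1 \geq 0$ and $0 \leq j_2 < N$, it simply exhibits the explicit coefficients $a = 1 + j_2 x$ and $b = 1 + j_1 + (N - j_2)y$; that $aM + bN = K$ is a one-line algebraic identity using $xM = yN+1$, and positivity of both coefficients is immediate from $j_1, y \geq 0$ and $j_2 < N$. You instead run the classical Frobenius-type argument: use invertibility of $N$ modulo $M$ to pin $b$ uniquely in the window $\{1,\dots,M\}$, set $a = (K - bN)/M$, and invoke the hypothesis on $K$ (through $yN + 1 = xM \geq M$, i.e. $x \geq 1$) to force $a \geq 1$. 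Each approach buys something: yours is more conceptual and in fact proves slightly more, since your estimate only requires $K \geq M(N+1)$, which is at most the stated threshold because $(yN+1)N = xMN \geq MN$; the paper's gives a closed-form expression for $(a,b)$ as a function of the excess $j = K - M - (yN+1)N$, with no appeal to modular inverses. Your single inequality $K - bN \geq K - MN \geq M + N\bigl((yN+1)-M\bigr) = M + (x-1)MN \geq M$ is verified correctly, so there is no gap.
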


\begin{proof} For an integer $j \geq 0$ there are unique integers $j_1, j_2$ such that $j_1 \geq 0$, $N > j_2 \geq 0$ and
$j = j_1 N + j_2$. Then $(M + (yN + 1)N) + j = (1 + j_2x)M + (1 + j_1 + (N - j_2)y)N$.
\end{proof}\vspace{.5cm}

Recall that an $N-M$ \emph{wedge} is a system isomorphic to a $N-0-M$ dumbbell.  We extend Proposition \ref{prop4.01}.

\begin{prop}\label{prop4.13} (a) If $(I,\phi)$ is a mixing system with $I \in \I$ then there exists $K \in \N$ such that
it is the factor of any $N-M$ wedge with $N, M \geq K$.

(b) For an $N-M$ wedge with $M, N$ relatively prime there exists $K$ such that it is a factor of any loop of length $L \geq K$.

(c) If $(I,\phi)$ is an $N-M$ wedge with $M, N$ relatively prime then $\phi$ is mixing.
\end{prop}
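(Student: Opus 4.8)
The plan is to regard an $N$--$M$ wedge as two cycles, the in-loop on $[1,N]$ and the out-loop on $[N,N+M-1]$, glued at the common base vertex $N$, and to analyse closed walks based at $N$. Such a walk decomposes into a sequence of full traversals of the in-loop (each of length $N$) and of the out-loop (each of length $M$), so its possible lengths are exactly the numbers $aN+bM$ with $a,b\ge 0$, and it visits every vertex precisely when $a,b\ge 1$. All three parts are read off from this picture together with Lemma \ref{lem4.12}; I would present them in the order (c), (b), (a), since (c) is the underlying numerical fact and (a) is the real obstacle.

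For (c) I would verify directly that $\phi^P=I\times I$ for some $P$. For vertices $x,y$ of the wedge, fix a walk from $x$ to $N$ of length $s_x$ and a walk from $N$ to $y$ of length $t_y$ (these exist because the wedge is a single basic set, every vertex reaching $N$ and $N$ reaching every vertex). Concatenating these and inserting $a$ in-loops and $b$ out-loops at the base gives a walk from $x$ to $y$ of length $s_x+t_y+aN+bM$ for all $a,b\ge 1$. Since $\gcd(N,M)=1$, Lemma \ref{lem4.12} shows that $\{aN+bM:a,b\ge 1\}$ contains every integer beyond some threshold $T$, so $(x,y)\in\phi^k$ for all $k\ge s_x+t_y+T$. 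As $I$ is finite, putting $P=T+\max_{x,y}(s_x+t_y)$ yields $(x,y)\in\phi^P$ for every pair, i.e. $\phi^P=I\times I$ and $\phi$ is mixing.

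For (b), given large $L$ I would use Lemma \ref{lem4.12} (for the coprime pair $N,M$) to write $L=aN+bM$ with $a,b\ge 1$, and let $\pi:[1,L]\to[1,N+M-1]$ trace the closed walk based at $N$ that runs around the in-loop $a$ times and then the out-loop $b$ times. By construction $\pi$ carries $\phi_L$ into the wedge relation, and since $a,b\ge 1$ it is surjective on vertices. To upgrade this to a factor map I would invoke Lemma \ref{lem1.01a}(b): for every edge $(p,q)$ of the wedge either $p\ne N$ and $\phi(p)=\{q\}$, or $p=N$ and $\phi^{-1}(q)=\{N\}$; hence a surjection of vertices is automatically a surjection of systems. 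Thus the wedge is a factor of every loop of length $L\ge K$, where $K$ is the threshold of Lemma \ref{lem4.12}.

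The main obstacle is (a), where the target $(I,\phi)$ is an arbitrary mixing system, so Lemma \ref{lem1.01a}(b) no longer applies and surjectivity of the \emph{relation} must be arranged by hand. Since $\phi$ is mixing there is $P$ with $\phi^p=I\times I$ for all $p\ge P$, giving walks of every length $\ge P$ between any two vertices. Fix a base point $v_0$ and, by concatenating over each edge $(p_i,q_i)$ of $\phi$ a walk $v_0\to p_i$, the edge itself, and a walk $q_i\to v_0$, build a single closed walk $W$ at $v_0$ of some length $\ell_0$ that traverses every edge of $\phi$. Set $K=\ell_0+P$. For $N,M\ge K$ I would define $\pi$ on the wedge by sending $N$ to $v_0$, letting the in-loop follow $W$ extended by a closed walk at $v_0$ of length $N-\ell_0\ge P$ (so the in-loop walk has length exactly $N$ yet still traverses every edge), and letting the out-loop follow any closed walk at $v_0$ of length $M\ge P$. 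Then $\pi$ maps the wedge relation into $\phi$, and because the in-loop walk already covers all vertices and all edges, $(\pi\times\pi)$ maps the wedge \emph{onto} $\phi$, exhibiting $(I,\phi)$ as a factor. The delicate point is matching the walk length exactly to $N$ (and $M$) while preserving full edge-coverage, which is precisely what padding $W$ by a length-$(N-\ell_0)$ return loop accomplishes.
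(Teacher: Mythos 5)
Your proof is correct and follows essentially the same route as the paper: part (b) wraps a loop of length $L=aN+bM$ around the wedge using Lemma \ref{lem4.12}, part (c) pads paths through the base vertex with closed walks whose lengths are controlled by the same lemma, and part (a) glues onto the base point an edge-covering closed walk (which the paper gets from Proposition \ref{prop4.01}) padded by mixing to lengths exactly $N$ and $M$. The only differences are presentational --- your ordering (c), (b), (a) and your explicit appeal to Lemma \ref{lem1.01a}(b) for surjectivity, which the paper leaves implicit.
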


\begin{proof}(a) There exists $K_1$ so that $\phi^N = I \times I$ for any $N \geq K_1$. There exists a loop of length $K_2$
which maps onto $\phi$.  It is then easy to see that if $M, N \geq K_1 + K_2$ and $i_0 \in I$ then there are loops of length
$M $ and $N$ each of which maps onto $\phi$ via maps which send an arbitrary point in each to $i_0$.  We can put these together
to map the $N-M$ wedge onto $\phi$.

(b) If $L = aM + bN$ with $a, b \in \N$ then any loop of length $L$ maps onto an $N-M$ wedge. So the result follows from Lemma \ref{lem4.12}.

(c) Choose $K$ as in (b). Let $L \geq K + M + N$.  If $i, j \in I$ then there is a path of length $q \leq M + N$ from $i$ to $j$ and there
is a loop of length $L - q \geq K$ from $i$ to $i$. Combine to get a path of length $L$ from $i$ to $j$.
\end{proof}\vspace{.5cm}

\begin{cor}\label{cor4.14} (a) If $\{ (N_n,M_n) \}$ is a sequence of relatively prime pairs of positive integers
with $ N_n \to \infty,  M_n \to \infty$ as $n \to \infty$ then the $N_n-M_n$ wedges generate $\Gamma(CM(X))$.

(b) If $K$ is a positive integer and $\{ M_n \}$ is a sequence of positive integers relatively prime to $K$
and $M_n \to \infty$ as $n \to \infty$ then the $K-M_n$ wedges generate $\Gamma(CM(X) \cap C_s(X; Per \supset \{ K \}))$.\end{cor}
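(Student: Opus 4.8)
The plan is, for each part, to first pin down explicitly the set of relations on the right-hand side and then verify the two clauses in the definition of ``generates.'' For part (a) I would first prove that $\Gamma(CM(X))$ is exactly the set of all mixing surjective relations on members of $\I$. The inclusion of $\Gamma(CM(X))$ into the mixing relations is Proposition \ref{prop2.05}: if $f$ is chain mixing then $f^{\al}$ is mixing for every partition $\al$. Conversely, given a mixing $\phi$ on $I$ and any partition $\al : X \tto I$, Theorem \ref{theo3.01} supplies a topologically mixing $f \in H(X)$, hence chain mixing, with $f^{\al} = \phi$; thus $\phi \in \Gamma(f) \subseteq \Gamma(CM(X))$.

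With this characterization in hand, part (a) follows from Proposition \ref{prop4.13}. Each $N_n - M_n$ wedge is mixing by Proposition \ref{prop4.13}(c) (using $\gcd(N_n,M_n)=1$), so it lies in $\Gamma(CM(X))$. And given any $\phi \in \Gamma(CM(X))$, i.e.\ any mixing $\phi$, Proposition \ref{prop4.13}(a) produces a threshold $K'$ so that $\phi$ is a factor of every wedge both of whose legs have length at least $K'$; since $N_n, M_n \to \infty$, the $N_n - M_n$ wedge qualifies for all large $n$, so $\phi$ is a factor of one of them. Both clauses hold, proving (a).

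For part (b) write $S = CM(X) \cap C_s(X; Per \supset \{ K \})$. I would first show that $\Gamma(S)$ is exactly the set of mixing relations $\phi$ with $K \in Per(\phi)$. For the forward inclusion, Proposition \ref{prop2.05} gives mixing and the inclusion $Per(f) \subseteq Per(f^{\al})$ gives $K \in Per(\phi)$. For the reverse inclusion, given a mixing $\phi$ with $K \in Per(\phi)$, Theorem \ref{theo3.01} produces $f \in H(X)$ topologically mixing (hence chain mixing) with $f^{\al} = \phi$ and $Per(f) = Per(\phi)$; then $K \in Per(f)$, so $f \in S$ and $\phi \in \Gamma(f) \subseteq \Gamma(S)$. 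Each $K - M_n$ wedge is mixing by Proposition \ref{prop4.13}(c) and has $K \in Per$ (its in-loop is a $K$-cycle), so it lies in $\Gamma(S)$; this is the first clause.

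The hard step, and the one that does not reduce to Proposition \ref{prop4.13}(a), is the second clause: every $\phi \in \Gamma(S)$ must be realized as a factor of some $K - M_n$ wedge, where now one leg length is frozen at the possibly small value $K$. I would argue directly on the directed graph of $\phi$. Since $K \in Per(\phi)$, fix $i_0 \in |\phi^K|$, so there is a closed walk of length exactly $K$ through $i_0$, and map the length-$K$ in-loop of the wedge onto it, sending the common vertex to $i_0$. Because $\phi$ is mixing, $\phi^N = \phi^{N+1} = I \times I$ for some $N$, giving closed walks through $i_0$ of the coprime lengths $N$ and $N+1$; by Lemma \ref{lem4.12} closed walks through $i_0$ then exist of every sufficiently large length. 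Concatenating a fixed closed walk through $i_0$ that traverses every edge of $\phi$ (available since mixing forces strong connectivity) with these padding walks yields an edge-covering closed walk through $i_0$ of every sufficiently large length. For $n$ large enough that $M_n$ exceeds this bound, map the out-loop onto such a length-$M_n$ walk. The two maps agree at the common vertex, so together they define a single relation-homomorphism $p$ from the wedge to $\phi$; since $p$ is a homomorphism, $(p \times p)$ carries the wedge relation into $\phi$, while the edge-covering property forces every edge of $\phi$ to be an image, so $(p \times p)$ carries the wedge relation onto $\phi$, exhibiting $\phi$ as a factor of the $K - M_n$ wedge. The main obstacle is precisely this construction of an edge-covering closed walk of prescribed length through a vertex of a fixed short cycle, and the coprimality hypothesis enters only to keep the wedges mixing.
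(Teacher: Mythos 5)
Your proposal is correct and follows essentially the same route as the paper: part (a) is exactly the paper's argument (Proposition \ref{prop2.05} plus Theorem \ref{theo3.01} for the characterization, then Proposition \ref{prop4.13}(a),(c)), and part (b) is the paper's gluing construction --- a $K$-loop mapped into $\phi$ at a point of $|\phi^K|$ joined at the wedge vertex to a long loop mapped onto $\phi$. Your explicit construction of an edge-covering closed walk through $i_0$ of every sufficiently large length merely fills in the detail the paper compresses into ``$\phi$ is a factor of $L$ loops for sufficiently large $L$'' (and could be shortened, since mixing already gives $\phi^n = I \times I$ for all $n \geq N$, so no appeal to Lemma \ref{lem4.12} is needed).
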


\begin{proof}  (a) From Proposition \ref{prop4.13} (c) it follows that if $N, M$ are relatively prime then an $N-M$ wedge is
mixing.  From Theorem \ref{theo3.01} and Proposition \ref{prop4.13}(a) it follows that $\Gamma(CM(X))$ is generated
by the sequence of $N_n-M_n$ wedges.

(b) Assume $(I,\phi)$ is mixing with $|\phi^K| \not= \emptyset$.
If $ i \in |\phi^K|$ then there is a $K$ loop which maps into $\phi$ which begins and ends at $i$.  If $\phi$ is mixing
then it is a factor of $L$ loops for sufficiently large $L$. Hence, it is a factor of a $K-M_n$ wedge if $n$ is sufficiently
large. Again Theorem \ref{theo3.01} implies that any $K-M$ wedge with $K, M$ relatively prime represented by some
mixing homeomorphism $f$ with $K \in Per(f)$.
\end{proof}\vspace{.5cm}

Recall that $f \in C_s(X)$ is weak mixing when $f \times f$ is topologically transitive on $X \times X$. A factor of a weak
mixing system is weak mixing.  On the other hand, a nontrivial loop is not a weak mixing system.  It follows that a weak
mixing system does not have any nontrivial loop factors and so is chain transitive.

\begin{theo}\label{theo4.15} $CM(X)$ is a conjugacy transitive subset of $C_s(X)$. There exists $f \in H(X)$
with $(X,f)$ minimal, which is a conjugacy transitive element of $CM(X)$. If $f$ is a conjugacy transitive element for
$CM(X)$ then $f$ does not admit any periodic points.

The set of weak mixing, minimal homeomorphisms which are conjugacy transitive form a dense $G_{\dl}$ subset of
$CM(X)$.\end{theo}

\begin{proof}  Again it follows from Theorem \ref{theo3.02} that $CM(X)$ is conjugacy transitive. In detail for each
$\phi$ mixing, the theorem implies that $\Gamma^{-1}(\phi) \cap CM(X)$ contains a dense set of topologically mixing homeomorphisms.
Hence, the $G_{\dl}$ set $\Gamma^{-1}(\phi) \cap WM(X)$ is dense in $CM(X)$. Intersecting over the countable set of mixing $\phi$
we obtain a $G_{\dl}$ set, dense in $CM(X)$, each member of which is a transitive element of $CM(X)$.

We will construct a
conjugacy transitive element of $CM(X)$ which is minimal and so does not admit periodic points.
Just as in Corollary \ref{cor4.11} it
follows that no transitive element admits periodic points.

Let $N_1 = 1 = M_1$ and let $A = \begin{pmatrix}a & b \\ c & d\end{pmatrix}$ be a matrix of positive integers with
determinant $\pm 1$.  This implies that $A^{-1}$ is an integer matrix. Inductively, let
\begin{equation}\label{4.06}
\begin{pmatrix} M_{n+1} \\ N_{n+1} \end{pmatrix} \quad = \quad
\begin{pmatrix}a & b \\ c & d\end{pmatrix} \cdot \begin{pmatrix} M_{n} \\ N_{n} \end{pmatrix}.
\end{equation}

Clearly, $min(M_{n+1}, N_{n+1}) \ \geq \ M_n + N_n \ > \ max(M_n,N_n)$ and so $M_n, N_n \to \infty$.
Since $A^{-1}$ is an integer matrix it follows that if $d$ is a common divisor of $M_{n+1}$ and $N_{n+1}$
then it is a common divisor of $M_n$ and $N_n$. So, by induction, $(N_n,M_n)$ is a relatively prime pair for each $n$.

Let $p_{n+1,n}$ map each loop of the $N_{n+1}-M_{n+1}$ wedge onto the $N_n-M_n$ wedge with the common wedge point mapped
to the wedge point. For example, the $N_{n+1}$ loop maps $a$ times around the $N_n$ loop and $b$ times around the
 $M_n$ loop. By starting both loops around $N_n$ and ending both around $M_n$ it follows that, with the other choices
arbitrary, $p_{n+1,n}$ realizes a $\pm$ determined lift.  Hence, we obtain an invertible, pointed Shimomura sequence whose
inverse limit $(X,f)$ is a conjugacy transitive element of $CM(X)$.

We show that for every $x \in X$
$\{ f^k(x) : k \in \Z \}$ is dense in $X$.  Let $\al_n: (X,f) \to ([N_n + M_n -1],\phi_{N_n-0-M_n})$ be the projection
to the wedge at level $n$.  Either $\al_{n+1}(x)$ of $\al_{n+1}(f(x))$ is not at the wedge point.  Then $\al_{n+1}(f^k(x))$
for $k \in [-(N_{n+1} + M_{n+1} -1), N_{n+1} + M_{n+1} - 1]$ maps at least onto one of the loops. Then $\al_n =
p_{n+1,n} \circ \al_{n+1}$ maps onto both loops. It follows that $(X,f)$ is a minimal system.

By Proposition \ref{prop1.05} the set $MM(X)$ of minimal maps on $X$ is a  $G_{\dl}$ subset of $C_s(X)$. Such a map is chain
transitive and so is chain mixing exactly when it does not factor over a nontrivial loop. Hence, $H(X) \cap MM(X) \cap CM(X)$
is the $G_{\dl}$ set of such homeomorphisms.  Since it is conjugacy invariant and contains $f$ as constructed above, it
is dense in $CM(X)$. Since $WM(X)$ is a dense $G_{\dl}$ subset of $CM(X)$, the intersection $H(X) \cap MM(X) \cap WM(X)$
is dense in $CM(X)$. The conjugacy transitive elements $Trans(CM(X))$ is also a $G_{\dl}$, dense since it is nonempty.
Thus, $H(X) \cap MM(X) \cap WM(X) \cap Trans(CM(X))$ is a dense $G_{\dl}$ subset of $CM(X)$.
\end{proof}\vspace{.5cm}

Define $HM(X;1!)$ to be the set of homeomorphisms $f$ on $X$ which admit a unique fixed point and if $x \in X$ is not
fixed by $f$ then the $\pm$ orbit $\{ f^i(x) : i \in \Z \}$ is dense in $X$. Since $X$ is perfect, it follows that
such a homeomorphism is topologically transitive (see the Remark after Proposition \ref{prop1.05}). Since it has a
fixed point it is chain mixing.

\begin{theo}\label{theo4.16} There exists $f \in HM(X;1!)$ which is a transitive element for $CM(X;1)$. The set
$HM(X;1!) \cap WM(X) $ is a dense, $G_{\dl}$ subset of $CM(X;1)$.\end{theo}

\begin{proof} As in Theorem \ref{theo4.15}, the set $WM(X; 1)$ is a dense $G_{\dl}$ subset of $CM(X; 1)$ by Theorem
\ref{theo3.02}.

The $1-M$ wedge, or \emph{pointed loop}\index{pointed loop} of length $M$, is given by $([1,M],\phi_{1-M})$ with
$$\phi_{1-M} = \{ (i,i+1) : 1 \leq i < M \} \ \cup \{ (1,1),(M,1) \}.$$

Let  $\{ N_n \}$ be a sequence in $\N$ with $N_1 = 1$ and $N_{n+1} \geq 2N_n + 2$ for all $n \in \N$.
We define $p_{n+1, n} : ([1,N_{n+1}],\phi_{1-N_{n+1}}) \tto ([1,N_n],\phi_{1-N_n})$ so that
\begin{itemize}
\item $p_{n+1, n}(i) = 1 $ for $ i = 1, 2, N_{n+1} - 1, N_{n+1}$.
\item $p_{n+1, n}^{-1}(j)$ has at least two elements for all $j \in [1, N_n]$.
\end{itemize}
That is, the $N_{n+1}$ loop at level $n+1$ is wrapped by $p_{n+1, n}$ at least twice around the $N_n$ loop at level $n$.
It follows from these two conditions that each $p_{n+1, n}$ realizes a $\pm$ directional lift.  Hence,
 $\{p_{n+1, n} : ([1,N_{n+1}],\phi_{1-N_{n+1}}) \tto ([1,N_n],\phi_{1-N_n})  \}$
is an invertible, pointed Shimomura sequence. Let $(X,f)$ be the limit with
$\al_n : (X, f) \tto ([1,N_n],\phi_{1-N_n})$ the projection to the $n^{th}$ coordinate.

Let $e \in X$ be the point with $\al_n(e) = 1$ for all $n$. Clearly, $e$ is a fixed point
for $f$. If $x \not= e$ then $\al_n(x) \not= 1$ for some $n$ and so
$\al_m(x) \not= 1$ for all $m \geq n$.
It follows that $\al_m$ maps $\{ f^i(x) : |i| \leq N_m$ onto $[1,N_m]$ for all $m \geq n$.
This implies that the $\pm$ orbit of $x$ is dense in $X$.
That is, $f \in H(X;1!) \subset CM(X;1)$.

Since $\phi_{1-N_n} \in \Gamma(f)$ for all $n$, it follows from Corollary \ref{cor4.14} (b)
that $\Gamma(f) = \Gamma(CM(X;1))$ and so
$f$ is a conjugacy transitive element of $CM(X;1)$.

By Proposition \ref{prop1.05} the set $H(X;1!)$ is a $G_{\dl}$ subset of $CM(X;1)$.
Since it contains $f$ and is conjugacy invariant it is dense
in $CM(X;1)$. Hence, $H(X; 1) \cap WM(X) $ is dense in $CM(X;1)$
\end{proof}\vspace{.5cm}

We did not bother considering $Trans(CM(X;1))$ because, as we will now see, $CM(X;1)$ is conjugacy minimal,
i.e. every element of $CM(X;1)$ is conjugacy transitive for $CM(X;1)$.

We call a map $f$ \emph{periodic} when for some $n \in \N$, $|f^n| = X$, or, equivalently, $f^n = 1_X$.

\begin{theo}\label{theo4.17}(Shimomura)  If $f \in C_s(X)$ is not periodic then the closure of
the conjugacy class of $f$ contains $CM(X;1)$.
\end{theo}

\begin{proof} By Theorem \ref{theo3.13} it suffices to show that
$\Gamma(CM(X;1)) \subset \Gamma(f)$. By Corollary \ref{cor2.08} we can replace
$f$, if necessary, by its natural lift $\tilde f$ to a homeomorphism. Notice that if $f^n(x) \not= x$ and
 $\tilde x$ is a lift of $x$ then $\tilde f^n(\tilde x) \not= \tilde x$. In particular, if $f$ is not periodic then
 $\tilde f$ is not.

 So we assume $f \in H(X)$ is not periodic.  We show that for every $M \in \N$, there is a $1-M$ wedge in $\Gamma(f)$.
By Corollary \ref{cor4.14} (b) this will imply $\Gamma(CM(X; 1)) \subset \Gamma(f)$.

Since $f$ is not
periodic, there exists a point $x \in X$ such that
\\$x, f(x),...,f^{M+1}(x)$ are distinct points and so there is a clopen set $U$ with
$x \in U$ such that $\{ f^i(U) : i = 0,...,M + 1 \}$ are pairwise
disjoint. Let $\al : X \to I = \{1,...,M \}$ with $\al^{-1}(i) =
f^{i-1}(U)$ for $i = 2,...,M$ and $\al^{-1}(1) = X \setminus
\bigcup_{i=1}^{M-1} \ f^i(U)$.  Clearly, $f^{\al} $ is the $1-M$ wedge.
\end{proof}\vspace{.5cm}

{\bfseries Remark:} Using a more delicate proof, Shimomura shows, in \cite{S2}, that if $f$ is not periodic and
$g \in CM(X)$ then $g$ is in the orbit closure of $f$ iff $Per(f) \subset Per(g)$.
\vspace{.5cm}

\begin{cor}\label{cor4.18} In $C_s(X)$ the sets $\{ 1_X \}$ and $CM(X;1)$ are the
only conjugacy minimal subsets.\end{cor}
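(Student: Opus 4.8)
The plan is to reduce the entire statement to a single lemma: if $f \in C_s(X)$ is periodic (some $f^n = 1_X$) and $f \neq 1_X$, then $1_X \in \overline{H(X)\cdot f}$, equivalently $\Gamma(1_X) \subseteq \Gamma(f)$ by Theorem \ref{theo3.13}. Granting this, the two assertions of the corollary follow from the machinery already assembled. First I would record two trivial facts. The identity is represented only by identity relations, since $(1_X)^{\al} = (\al \times \al)(1_X) = 1_I$, so $\Gamma(1_X) = \{ 1_I : I \in \I \}$. Moreover $1_X$ is not chain transitive: for an ultrametric each $V_{\ep}$ is a clopen equivalence relation, so $\CC 1_X = \bigcap_{\ep} \OO V_{\ep} = \bigcap_{\ep} V_{\ep} = 1_X \neq X \times X$, whence $1_X \notin CT(X) \supseteq CM(X;1)$. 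Finally $\{ 1_X \}$ is conjugacy minimal, since $h \circ 1_X \circ h^{-1} = 1_X$ makes its conjugacy class the closed singleton $\{ 1_X \}$.

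To prove the lemma I would use that a periodic map is automatically a homeomorphism of some finite order $n$ (its inverse being $f^{n-1}$) and manufacture arbitrarily fine $f$-invariant clopen partitions. Given any clopen partition $\A$, I form the common refinement $\mathcal B = \A \vee f^{-1}\A \vee \cdots \vee f^{-(n-1)}\A$. Because $f^{-n}\A = \A$, one checks that $f^{-1}\mathcal B = \mathcal B$, so $f$ merely permutes the blocks of $\mathcal B$. Grouping the blocks into their $\langle f \rangle$-orbits yields a genuinely $f$-invariant clopen partition $\beta : X \tto J$ with $f^{\beta} = 1_J$; since each orbit has at most $n$ blocks while $\mathcal B$ refines $\A$, we get $|J| \geq |\A|/n$. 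Letting $\A$ run through finer partitions forces $|J| \to \infty$, so $1_J \in \Gamma(f)$ for arbitrarily large $J$; closure under factors (Proposition \ref{prop3.10}) then pulls every $1_I$ into $\Gamma(f)$, giving $\Gamma(1_X) \subseteq \Gamma(f)$. This partition construction is the one substantive step; the rest is bookkeeping.

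Next I would show $CM(X;1)$ is conjugacy minimal. Given $f \in CM(X;1)$, the lemma shows $f$ cannot be periodic: otherwise $f \neq 1_X$ (it is chain transitive) would give $\Gamma(1_X) \subseteq \Gamma(f) \subseteq \Gamma(CM(X;1))$, hence $1_X \in \Gamma^*(\Gamma(CM(X;1))) = CM(X;1)$ by Theorem \ref{theo3.11}(f), contradicting $1_X \notin CT(X)$. Being non-periodic, $f$ satisfies $CM(X;1) \subseteq \overline{H(X)\cdot f}$ by Theorem \ref{theo4.17}, while $\overline{H(X)\cdot f} \subseteq CM(X;1)$ because the latter is closed and conjugacy invariant. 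Thus $\overline{H(X)\cdot f} = CM(X;1)$, so $f \in Trans(CM(X;1))$; as $f$ was arbitrary, $CM(X;1)$ is conjugacy minimal.

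Finally I would prove these are the only two. Let $K$ be conjugacy minimal and choose $f \in K$, so $K = \overline{H(X)\cdot f}$. If $f$ is periodic, then either $f = 1_X$, forcing $K = \{ 1_X \}$, or $f \neq 1_X$, in which case the lemma puts $1_X \in K$ while $\overline{H(X)\cdot 1_X} = \{ 1_X \} \subsetneq K$, contradicting minimality. If $f$ is non-periodic, Theorem \ref{theo4.17} gives $CM(X;1) \subseteq K$; picking $g \in CM(X;1) \subseteq K$, minimality and the previous paragraph give $K = \overline{H(X)\cdot g} = CM(X;1)$. Hence $K$ is one of $\{ 1_X \}$ and $CM(X;1)$. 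The main obstacle throughout is the lemma — specifically checking that the orbit-grouped partition $\beta$ is genuinely $f$-invariant and that $|J|$ grows without bound; once that is in hand, everything reduces to direct applications of Theorems \ref{theo3.11}, \ref{theo3.13} and \ref{theo4.17}.
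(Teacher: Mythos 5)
Your proof is correct and follows essentially the same route as the paper: both reduce the corollary to the key fact that a periodic $f$ admits arbitrarily large $f$-invariant clopen decompositions (so $\Gamma(1_X) \subset \Gamma(f)$ and hence $1_X \in \overline{H(X)\cdot f}$ by Theorem \ref{theo3.13}), and both then handle non-periodic maps by citing Theorem \ref{theo4.17}. The only difference is technical and minor: the paper obtains an $f$-permuted decomposition by passing to the invariant ultrametric $d_f = \max_{i} d\circ(f^i\times f^i)$ and taking $\ep$-balls, whereas you take the join $\A \vee f^{-1}\A \vee \cdots \vee f^{-(n-1)}\A$; in both cases one groups the blocks into $\langle f\rangle$-orbits and counts.
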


\begin{proof}  If $f \in C_s(X)$ is chain transitive then it is not periodic. In particular, the conjugacy class
of every element of $CM(X;1)$ is dense in $CM(X;1)$. That is, $CM(X;1)$ is a conjugacy minimal set.

The identity map commutes with every homeomorphism on $X$ and so $\{ 1_X \}$ is a fixed point for the $H(X)$ action.
It is therefore a conjugacy minimal set.

Clearly, $\Gamma(1_X) \ = \ \{ (I,1_I) : I \in \I \}$.  It follows that $1_X$ is in the orbit closure of $f$ iff
for every positive integer $N$ there exists a decomposition of $X$ of cardinality $N$ with each member an $f$ invariant set.

Now suppose that that $f$ is periodic with $f^n = 1_X$ for some $n \in \N$. Let
$d_f(x,y) \ = \ max_{i=0}^{n-1} \ d(f^i(x),f^i(y))$. This replaces $d$ with the topologically equivalent ultrametric $d_f$
and the latter is $f$ invariant, i.e. $f$ is an isometry.  Choose $\ep > 0$ small enough that the decomposition
$\{ V_{\ep}(x) :  x \in X \}$ contains at least $nN$ elements. Then, because $f$ is an isometry,
$\{ \bigcup_{i = 0}^{n-1} V_{\ep}(f^i(x)) : x \in X \}$ is a decomposition containing at least $N$ elements each of which is
$f$ invariant.  Hence, if $f$ is periodic, then $1_X$ is in the closure of its conjugacy class.

Thus, every closed, conjugacy invariant set contains either $\{ 1_X \}$ or $CM(X;1)$.
It follows that these are the only two conjugacy minimal sets.
\end{proof}\vspace{.5cm}

Notice that in Theorems \ref{theo4.15} and \ref{theo4.16} We did not describe any
residual transitive elements.  I conjecture that they
do not exist. In the construction of the former result there is such a wide range
of choices that it is hard to imagine  a construction can yield
a Shimomura sequence which satisfies the factoring property. The attempt in the latter case leads to an interesting semigroup.

Let $w = w(e,L)$ denote a finite word in the letters $e, L$ of length $N $, denoted $|w|$, so that $w \in \{ e,L \}^N$.
Let $k_w$ (and  $ K_w$) denote the number of letters $e $ (resp. the number of letters $L$)
in the word $w$. Define the affine function
$\ell_w$ by $\ell_w(x) = k_w + K_w x$. Let ${\mathcal S}$ denote the set of
words with $K_w > 0$. We define composition in ${\mathcal S}$ by
$w = w_1 * w_2$ with $w(e,L) = w_2(e,w_1(e,L))$. That is, substitute for every occurrence of
$L$ in $w_2$ the word $w_1$. If $m = \ell_w(n)$
then $w$ determines a unique map $p_{(w;m,n)} : ([1,m],\phi_{1-m}) \tto ([1,n],\phi_{1-n})$
which starts with a map of $1$ to $1$. As one moves along the
word, each $e$ indicates a map of a digit to $1$ and a move to the next digit in the domain,
and each $L$ indicates a map of $n$ digits in order to $[1,n]$
followed by a move to the next digit, except that after the final letter of $w$ is a stop
instead of a move to the next digit.  Conversely,
it is clear that if $p :  ([1,m],\phi_{1-m}) \tto ([1,n],\phi_{1-n})$ for any $m \geq n$,
then there is a unique word $w$ so that $p = p_{(w;m,n)}$ which implies
$m = \ell_w(n)$. It is easy to see that
\begin{equation}\label{4.07}
\begin{split}
\ell_{w_1 * w_2} \quad = \quad \ell_{w_2} \circ \ell_{w_1}, \hspace{3cm}\\
p_{(w_1 * w_2;m_2,n)} \quad = \quad p_{(w_1;m_1,n)} \circ p_{(w_2;m_2,m_1)}, \\
\mbox{with} \quad m_1 \ = \ \ell_{w_1}(n), \quad m_2 \ = \ \ell_{w_2}(m_1).
\end{split}
\end{equation}
For example, the word $L$ is the identity in ${\mathcal S}$ with
$\ell_L(x) = x$ and $p_{(L;n,n)}$ the identity on $([1,n],\phi_{1-n})$.

Let ${\mathcal S}'$ be the subsemigroup consisting of words $w$ which begin and end with $e$
and with $K_w \geq 2$. Let $\{ w_n \}$ be a sequence of
not necessarily distinct elements of ${\mathcal S}'$. Let $N_1 = 1$ and inductively define $N_{n+1} = \ell_{w_n}(N_n)$.
Let $p_{n+1,n} = p_{(w_n;N_{n+1},N_n)}$. This defines an invertible, pointed Shimomura sequence. The condition that the
words begin and end with $e$ is needed to get a $\pm$ directional lift.  If the limit is $(X,f) $ then $f \in H(X;1!)$.

The factoring property for the Shimomura sequence associated with $\{ w_n \}$ is equivalent
to following factorization property in the
semigroup:

For every $n \in \N$ and $w \in {\mathcal S}$ there exists $m \geq n$ and $\tilde w \in {\mathcal S}$ such that
\begin{equation}\label{4.08}
w * \tilde w \quad = \quad w_n * w_{n+1} * \dots * w_m.
\end{equation}
However, this is impossible for any sequence in ${\mathcal S}'$.  Consider the finite
list of positive integers which occur as the length of a run
of $L$'s in $w_a \in {\mathcal S}'$. Because $w_a$ begins and ends with $e$ it follows
that for any $w_b \in {\mathcal S}$ these are exactly the length of
runs in $w_a * w_b$.  In particular, given $n$, if we choose $w \in {\mathcal S}'$ so
that some length of runs occurs in $w$ but not in $w_{n}$ then
the factorization is not possible.

Nonetheless, the semigroup is of interest in studying $H(X;1!)$.

\begin{theo}\label{theo4.19} With $N_1 = 1$ construct the pointed, invertible Shimomura sequence associated with the sequence
$\{ w_n = w : n \in \N \}$ for $w \in {\mathcal S}'$.  Let $(X,f)$ denote the limit so that $f \in H(X; 1!)$.

(a) If $w = eLeLLe$ or $w = eLLLe$  then $(X,f)$ is topologically mixing.

(b) If $w = eLLe$ , the simplest word in ${\mathcal S}'$, or $w = eLeLe$ then $(X,f)$ is not weak mixing.

\end{theo}

\begin{proof} Let $w^n$ be the $n^{th}$ power of the element $w$ in the semigroup. Let $\al_n : X \to [1,N_n]$ be
the projection map from the inverse limit.

(a): For $w = eLeLLe$ we have $\ell_w(x) = 3 + 3x$ so that $N_{n+1} = 3 + 3N_n$
which implies $N_n = \frac{1}{2}[5 \cdot 3^{n - 1} - 3]$.
\vspace{.2cm}

{\bfseries Claim:}  In $w^n$ there occur between successive $L$'s runs of $e$'s of every length from $0$ up to
$2n - 1$.
\vspace{.5cm}

\emph{Proof of Claim}:  This is true by inspection for $n = 1$ since $w$ contains runs of length $1$ and $0$ between
successive $L$'s. Since $w^{n+1} = w * w^n$ we replace the $L$'s in $w^n$ by $w$'s to get $w^{n+1}$. This replaces each run
of length $k$ by a run of length $k + 2$. Thus, we obtain runs of $e$'s of length $2, \dots, 2n + 1 = 2(n+1) - 1$.
Within each $w$ occur runs of length $0$ and $1$.  So the Claim follows by induction.
\vspace{.2cm}

To show that $(X,f)$ is topologically mixing, it suffices to show that for every $n \geq 2$ and every $i \in [1,N_n]$
there exists $K \in \N$ so that the hitting time set $N(U_i,U_i)$ contains every integer greater than $K$ where
$U_i = \al^{-1}(i)$. Since $\al^{-1}(1)$ contains the fixed point the result is clear for $i = 1$.

Let $K = N_n$ and let $t = N_n + r$ with $0 \leq r \leq 2k - 1$ for some $k \in \N$.  Consider the map $p_{n+k,n}$
In $w^k$ there is a run of $e$'s of length $r$ between two successive $L$'s. If $j$ is the location of the
$i$ position in the first $L$ of the pair within $N_{n+k}$ then there exists $x$ with $\al_{n+k}(x) = j$.  Then $\al_n(x) = i$
and $\al_n(f^{t}(x)) = i$. In detail, iterating $f$ moves $x$ to position $1$ in $[1,N_{n}]$ where it then remains for
for $r$ steps and then it is moved back to position $i$ around the other end of the loop. Thus, $t \in N(U_i,U_i)$.
Hence, $(X,f)$ is topologically mixing.

For $w = eLLLe$, $N_{n+1} = 2 + 3N_n$ and so $N_n = 2 \cdot 3^{n-1} - 1$ with $N_n$ odd for
all $n$. For $n > 1$, as in the Claim above,
$w^n$ consists of
blocks $LLL$ separated by runs of $e$'s of even length from $2$ to $2(n-1)$.
If we have in $w^k$ two blocks of $LLL$ separated by a run of $e$'s of length $2r$,  then at level $n+k$
we choose $x$ at position $i$ in the third $L$ of the
first block of the pair.  We have $f^{t_1}(x)$ and $f^{t_2}(x)$ are at position $i$ in the
first and second $L$'s of the second block of the
pair when $t_1 = N_n + 2r$ and $t_2 = 2N_n + 2r$.  Since $N_n$ is odd we can choose $K = 2N_n + 2$ and proceed as above.
We leave the details to the reader.

(b): Since $\ell_w(x) = 2 + 2x$ we see that $N_{n+1} = 2 + 2N_{n}$ and so $N_n = 3 \cdot 2^{n-1} - 2$. In particular,
every $N_n$is even for $n \geq 2$.
 In $w^n$ there occur between successive $L$'s runs of $e$'s only of even length from
$0$ to $2n - 2$.

For any level $n \geq 2$ let $2 < i < N_n$. Assume that $t_1 \in N(U_i,U_i)$ and $t_2 \in N(U_i,U_{i-1})$.
Let $k > max(t_1, t_2)$. If $x_1, x_2 \in U_i$ with $f^{t_1}(x_1) \in U_i$ and $f^{t_2}(x_2) \in U_{i-1}$ then
at level $n+k$, neither $x_1$ nor $x_2$ lies in the portion of $[1,N_{n+k}]$ labeled by the last $L$ in $w^k$ because this
is followed by $k$ copies of $e$. This means that the pairs $x_1, f^{t_1}(x_1)$ and  $x_2, f^{t_2+1}(x_2)$
all lie in positions $i$ of $L$'s in $w^{k}$. But the length $N_n$ to which each $L$ maps via $p_{n+k,n}$ is even and
the number of $e$'s between them are even.  This implies that $t_1$ and $t_2+1$ are even and so $t_2$ is odd.
It follows that $N(U_i,U_i) \cap N(U_i,U_{i-1}) = \emptyset$ and so $(X,f)$ is not weak mixing.

For $w = eLeLe$, $N_{n+1} = 3 + 2N_n$ and so $N_n = 2^{n+1} - 3$ with $N_n$ odd for all $n$. The runs of $e$'s in $w^n$ between two
successive $L$'s  have odd length from $1$ to $2n-1$. Since each $N_n$ is odd,
the number of steps is even between the same location
not equal to $1$ in different $L$'s. Proceed as above. Details to the reader.
\end{proof} \vspace{.5cm}


%

{\bfseries Example 5 - Non Residual Factor }
\vspace{.5cm}

We conclude by observing that a factor of a homeomorphism of residual type need not be of residual type.

We showed in Proposition \ref{prop4.05} that $(\tilde X,\tilde f)$ which extends $T \times 1_C$ to the
 two-point compactification of $\Z \times C$ is of residual type.  If we let $(X, f)$ be the extension to
 the one-point compactification then $f$ is a chain transitive homeomorphism with a fixed point and so
 its conjugacy class is dense in the minimal set $CM(X;1)$. The system
 $(X,f)$ is obviously a factor of $(\tilde X, \tilde f)$. On the other hand,
 it is not topologically transitive and so its conjugacy class is disjoint from the $G_{\dl}$ set of topologically
 transitive maps with fixed points. By Theorem \ref{theo4.16} the latter set is dense in $CM(X; 1)$.
 By the Baire category theorem dense $G_{\dl}$ subsets meet. Hence, the dense conjugacy class of $f$ cannot be
 $G_{\dl}$. That is, $f$ is not of residual type.
\vspace{1cm}

\bibliographystyle{amsplain}

\printindex

\end{document}